\newtheorem{The}{Theorem}[section]
\newtheorem{Cor}[The]{Corollary}
\newtheorem{Lem}[The]{Lemma}
\newtheorem{Pro}[The]{Proposition}
\newtheorem{Rem}[The]{Remark}
\theoremstyle{remark}
\numberwithin{equation}{section}
\newcommand{\R}{\mathbb{R}}
\newcommand{\Z}{\mathbb{Z}}
\newcommand{\KK}{\mathbf{B}}
\newcommand{\LL}{\mathbf{L}}
\newcommand{\HH}{\mathbf{H}}
\newcommand{\GG}{{\rm G}}
\newcommand{\FF}{\mathcal{F}}
\newcommand{\lam}{\lambda}
\newcommand{\wtM}{\widetilde{\mathfrak{M}}}
\newcommand{\wtp}{\widetilde{p}}
\newcommand{\tmu}{\tilde\mu}
\newcommand{\px}{p^{x,v,0}}
\newcommand{\pxu}{p^{x,v,u}}
\def\leq{\leqslant}
\def\geq{\geqslant}
\newcommand{\xilax}{\xi_x^\lambda}
\newcommand{\dxilax}{\dot{\xi}_x^\lambda}
\newcommand{\vep}{\varepsilon}
\newcommand{\rro}{\mathbf{r_0}}
\newcommand{\alo}{\alpha_0}
\def\du#1{\langle#1\rangle}
\def\cA{\mathcal{A}}
\title[Convergence of solutions of H-J equations]{Convergence of solutions of Hamilton-Jacobi equations depending nonlinearly on the unknown function}
\author{Qinbo Chen}
\address{Dipartimento di Matematica `Tullio Levi-Civita', Universit\`a degli Studi di Padova,  35121 Padova, Italy}
\email{qinbochen1990@gmail.com}
\subjclass[2010]{35B40, 49L25, 37J50}
\keywords{nonlinear discounted systems, Hamilton-Jacobi equations, asymptotic behavior of solutions, weak KAM theory, Aubry-Mather theory}
\begin{document}

\begin{abstract}
Motivated by the vanishing contact problem, we study in the present paper the convergence of solutions of Hamilton-Jacobi equations depending nonlinearly on the unknown function. Let  $H(x,p,u)$ be a continuous Hamiltonian which is strictly increasing in $u$, and is convex and coercive in $p$. For each parameter $\lam>0$, we denote
 by $u^\lam$  the unique viscosity solution of the H-J equation
	\[	H\big(x,Du(x),\lam  u(x)\big)=c.\]
 Under quite general assumptions, we prove that $u^\lam$ converges uniformly, as $\lam$ tends to zero, to a specific solution of the critical H-J equation $	H(x,Du(x),0)=c.$  We also characterize the limit solution in terms of Peierls barrier and Mather measures.
\end{abstract}
\maketitle

\section{Introduction}\label{section_Int}
Throughout this paper, we let $M$ be a connected and compact smooth manifold without boundary, equipped with a smooth Riemannian metric $g$, the associated Riemannian distance on $M$ will be denoted by $d$. Let $TM$ and $T^*M$ denote the tangent and cotangent bundles respectively. A point of $TM$ will be denoted by $(x,v)$ with $x\in M$ and $v\in T_xM$, and a point of $T^*M$ by $(x, p)$ with $p\in  T_x^*M$ a linear form on the vector space $T_xM$.   With a slight abuse of notation, we will both denote by $|\cdot|_x$ the norm induced by $g$ on the fiber $T_xM$ and also the dual norm on $T_x^*M$. 

In the sequel, $H(x,p,u):T^*M\times\R\to\R$ will be a given continuous Hamiltonian which is strictly increasing in $u$, and is convex and coercive in the fibers. By adding a parameter $\lam> 0$, we consider the following stationary Hamilton-Jacobi equation 
\begin{align}\label{e1}
	H\big(x,Du(x),\lam  u(x)\big)=c,\quad x\in M.
\end{align}
Here, $c$ is a constant. This equation obeys the comparison principle, and therefore it has a unique viscosity solution $u^\lam: M\to\R$. As $\lam\to 0$, equation \eqref{e1} tends to a critical Hamilton-Jacobi equation 
\begin{align}\label{e2}
	H\big(x,Du(x),0\big)=c,\quad x\in M,
\end{align}
and then we are interested in the asymptotic behavior of the family of solutions $\{u^\lam\}_{\lam>0}$. In the sequel, for abbreviation, we set
 $$\GG(x,p):=H(x,p,0).$$ 
 As is well known, there is only one special \emph{critical value} $c=c(\GG)$ such that the H-J equation $\GG(x, Du(x))=c$ admits  viscosity solutions. 
 Thus it is natural to consider equations \eqref{e1} and \eqref{e2} only for $c=c(\GG)$, i.e.,
\begin{equation}\label{eq_H_lambda}\tag{HJ$_\lambda$}
H\big(x,Du(x),\lam  u(x)\big)=c(\GG),\quad x\in M
\end{equation}
and 
\begin{equation}\label{eq_G}\tag{HJ$_0$}
\GG\big(x,Du(x)\big)=c(\GG),\quad x\in M.
\end{equation}

Note that $H(x,p,\lam u)$ locally uniformly converges to $\GG(x,p)$ as $\lam\to 0$,
by the stability of the viscosity properties,  any accumulation point of the set $\{u^\lam\}_{\lam>0}$ must be a viscosity solution of \eqref{eq_G}. Nevertheless, equation \eqref{eq_G} has infinitely many solutions, and it is not clear  that limits of $\{u^\lam\}_{\lam>0}$ along different subsequences yield the same solution. Our starting point is to study the uniform convergence of the whole family  $\{u^\lam\}_{\lam>0}$.

In the case where $H(x, p, u)=u+\GG(x,p)$, \eqref{eq_H_lambda} is reduced to the so-called \emph{vanishing discount problem}, in view that $\lam>0$ appears as a discount factor in the dynamic programming principle for optimal control problems.
It is also known as the \emph{ergodic approximation} which was  proposed by Lions, Papanicolaou, and Varadhan in \cite{Lions_Papanicolaou_Varadhan1987}.  In the last decade, there have been many new contributions to this  vanishing discount
problem. 
A selection criterion for possible limits of solutions was given in \cite{Gomes2008}, and a partial convergence result was obtained in \cite{Iturriaga_Sanchez-Morgado2011}. 
Recently, the uniform convergence to a unique limit is established by Davini, Fathi, Iturriaga and Zavidovique  \cite{DFIZ2016}, see also \cite{DFIZ_Math_Z_2016} for a  discrete time version. The convergence and the characterization of the limit solution are obtained by taking advantage of Aubry-Mather theory and weak KAM theory.
Subsequently, it has been  generalized under various types of settings. For instance,  the first-order case with Neumann-type boundary condition  \cite{AAIY2016}, the first-order case with the ambient space being non-compact \cite{Ishii_Siconolfi2020}. It has also been generalized to the second-order H-J setting, see  \cite{Mitake_Tran2017, Ishii_Mitake_Tran2017_1, Ishii_Mitake_Tran2017_2}. In the case of weakly coupled system of H-J equations, the vanishing discount problem is investigated in \cite{Davini_Zavidovique2017, Ishii2019vanishing, Ishii_Jin2020} by using a generalization of  Mather (minimizing) measure.  The vanishing discount mean field game problem has been studied in \cite{Cardaliaguet_Porretta2019, gomes2019selection}. For
 the convergence problem from the negative direction (i.e. $\lam<0$), it is discussed in \cite{Davini_Wang2020} under the assumption that constant functions are critical subsolutions. We also refer to \cite{Zavidovique2020} for some degenerate discounted H-J equations.

Observe that the discounted system depends linearly on $u$.
Recently, there has been an interest in generalizing the  asymptotic convergence problem  to those Hamiltonians  depending nonlinearly or implicitly on $u$.  Such extensions  were first discussed in \cite{Gomes_Mitake_Tran2018, CCIZ2019}. For example,
in the case of nonlinear discounted systems, i.e. 
 \begin{equation}\label{separat_case}
 	 H(x,p,u)=f(x,u)+F(x,p),
 \end{equation} 
 the  convergence was obtained by \cite{CCIZ2019} under the following two assumptions:  {\bf(C1)}  $F(x,p)\in C(T^*M)$ is convex and coercive in the fibers; {\bf(C2)} $f(x, u)\in C^1(M\times\R)$ with  $f_u>0$ and 
\begin{equation}\label{loc_lip}
	\limsup_{u\to 0}\frac{|f_u(x,u)-f_u(x,0)|}{|u|}<+\infty, \quad \text{uniformly in~}  x.
\end{equation}
Note that \eqref{separat_case} is special because it is a separate system with $u$ decoupling with $p$. 

For more general $H(x,p,u)$ which is not separate, the work \cite{CCIZ2019}  also gives a convergence  result when the  Hamiltonian  satisfies  the following assumptions: {\bf(D1)}  $H_u\in C(T^*M\times\R)$ with  $H_u>0$. For every $R>0$, $\exists$ $B_R>0$ such that
\begin{equation}\label{old_loc_lip}
	 |H(x,p,u)-H(x,p,0)|\leqslant B_R|u|,\quad\text{for all}~|u|\leqslant R, ~(x,p)\in T^*M,
\end{equation}
\begin{equation}\label{old_rem}
|H_u(x,p,u)-H_u(x,p,0)|\leqslant B_R|u|,\quad \text{for all~} |p|_x\leqslant R, |u|\leqslant R;	
\end{equation}
{\bf(D2)} $\GG(x,p)$ is convex and coercive in the fibers. 
{\bf(D3)}  Either $\frac{\GG(x, p)-c(\GG)}{H_u(x,p,0)}$ is convex and coercive in the fibers, or 
\[\frac{\min_{_{|u|,~ |p|_x\leq R_0}} H_u(x, p, \lam u)}{\max_{_{|u|,~ |p|_x\leq R_0}} H_u(x, p, \lam u)}\longrightarrow 1, \quad\text{~as~} \lam\to 0 \]
for a suitably large constant $R_0$.  
Recently, using the implicit variational principle  (see for instance \cite{Su_Wang_Yan2016, Wang_Wang_Yan2019_Aubry, Wang_Wang_Yan2019_Variational})  of the contact systems, the authors of  \cite{WYZ2020} have introduced a totally new technique to remove the restriction (D3) above for $C^2$ contact Hamiltonians with Tonelli conditions.
Their approach is to establish Radon measures supported on the minimizing curves of the associated contact systems and then verify the convergence to Mather measures. Inspired by their approach, in the present paper we generalize the conditions to the more general case, with a slight losing of dynamical information. 

The goal of the current paper is, following a non-smooth setting analogous to \cite{DFIZ2016, CCIZ2019}, to further investigate the convergence problem for general continuous Hamiltonians with less restrictions. In such a setting,  no Hamiltonian dynamics can necessarily be defined,  so  dynamical systems techniques are not suitable. We instead use nonsmooth analysis and PDE methods.  Also, it is worth noting that,  different from the technique in \cite{CCIZ2019}, we will give an approximation scheme (analogous to \cite[Lemma 3.6]{WYZ2020}) to select certain Mather measures (see Section \ref{section_convergence}), then the convergence of the whole family of solutions follows from this approximation scheme.

\subsection{Setup and main result}
We first describe the setting and state main assumptions.
Suppose that the continuous Hamiltonian $H$ satisfies the following assumptions:
\begin{enumerate}
    \item  [\textbf{(H1)}] The map $ p\mapsto H(x, p, u)$ is convex on $T^*_x M$, for every $(x,u)\in M\times\R$.
    \item  [\textbf{(H2)}] There exists a constant $\rro>0$, such that $H(x,p, -\rro)$ is coercive in the fibers, i.e.,
	\[ \lim_{|p|_x\to \infty} H(x, p, -\rro)=+\infty,\quad \text{uniformly in~}  x\in M.\] 
	\item [\textbf{(H3)}] The map $u\mapsto H(x,p,u)$ is strictly increasing, for every $(x,p)\in T^*M$. 
\end{enumerate}

We point out that the convexity assumption {\bf(H1)} is essential for the  convergence problem, see \cite{Ziliotto_counterexample_2019} for a remarkable counterexample  where the convergence fails without the convexity. 

\begin{Rem}
It is easily seen from the monotonicity assumption {\bf(H3)} that for each $u_0\geq-\rro$, the function $H(x, p, u_0): T^*M\to \R$ is also coercive in the fibers. More precisely, for each $u_0\in[-\rro,+\infty)$,
\begin{equation}\label{coercive_forall_u}
	\lim_{|p|_x\to \infty}H(x, p, u_0)\geq \lim_{|p|_x\to \infty} H(x, p, -\rro)=+\infty, \quad \textup{uniformly in~}  x\in M.
\end{equation}
In particular, the critical Hamiltonian $\GG(x,p)$ is  coercive in the fibers. Also,
we refer the reader to Proposition \ref{equiv_def_H2} for an equivalent statement of assumption {\bf(H2)}.
\end{Rem}

 Assumption {\bf(H3)}, as is known to all, also guarantees the uniqueness of solutions.  See Proposition \ref{uni_bound} later for the existence and uniqueness of  solutions of equation \eqref{eq_H_lambda} for each small parameter $\lam>0$, under  assumptions {\bf(H2)} and {\bf(H3)}. In fact, it is a standard result obtained by  Perron's method and the comparison principle.

 Note that the assumptions above do not require smoothness of $H(x, p, u)$ on the variables $(x, p)$. Nevertheless,
  in order to address the convergence result,  we need the partial derivative with respect to $u$ at $u=0$:
\begin{enumerate} 
\item  [\textbf{(H4)}] The partial derivative $H_u(x,p,0)$  exists,  $H_u(x, p, 0)>0$ and $H_u(x, p, 0)$ $\in C(T^*M)$. Assume further that
\begin{equation}\label{H_locaunif_conv}
	\lim_{u\to 0}\frac{H(x,p,u)-H(x,p,0)}{u}=H_u(x,p,0) ,\quad \text{locally uniformly}
	\footnote{Here, the locally uniform convergence \eqref{H_locaunif_conv}   means that for each compact subset $B\subset T^*M$, we have:  for every $\vep>0$, there exists $\delta>0$ such that 
$\big| [H(x,p,u)-H(x,p,0)]/u-H_u(x,p,0) \big|< \vep$, for all $(x, p)\in B$, $0<|u|< \delta$.}
 \text{~in~}  (x, p)\in T^*M.
\end{equation}
\end{enumerate}

\medskip

\begin{Rem}\label{rem_1}	
Assumption {\bf(H4)} holds naturally if one assumes $H\in C^1(T^*M\times(-\delta, \delta))$ for some constant $\delta>0$ and $H_u(x,p,0)>0$.  Indeed,  it can be easily verified by using Newton-Leibniz formula.

Note that our assumptions do not require the uniformly Lipschitz condition, i.e. $|H_u|\leq C$ with $C$
a positive constant.
\end{Rem}

\begin{Rem}\label{non_neg}
In our setting, 
the partial derivative $ H_u(x,p,u)$ could take the value \textbf{zero} at some points $u\neq 0$. 
For example, \[H(x,p,u)=(u-2)^{2m+1}+|p|_x+V(x),\]
where $V\in C(M)$ and $1\leq m\in\Z$.
Observe that $H(x,p,u)$ is strictly increasing in $u$ (i.e. $H(x,p, u_1)$ $<$ $H(x,p, u_2)$ if $u_1<u_2$), and $ H_u(x,p, 0)>0$. So, it satisfies all assumptions {\bf(H1)--(H4)} above. But  $ H_u(x,p, 2)$ $=0$ for all $(x,p)\in T^*M$. 
\end{Rem}

\begin{Rem}\label{ex_nonlinear}
In particular, for the nonlinear discounted system of the form
\[H(x,p,u)=g(u)+\widehat{H}(x,p).\] To satisfy assumptions {\bf(H1)--(H3)}, it is enough to assume that
 $\widehat{H}(x,p)\in C(T^*M)$ is convex and coercive in the fibers, and $g(u)$ is continuous and strictly increasing. As for assumption {\bf(H4)}, it is equivalent to assume that
 $g(u)$ is differentiable at  $u=0$ with $g'(0)>0$. 
 \end{Rem}

We are now ready to state our first main result:

\begin{The}\label{mainresult1}
Let $H(x,p,u): T^*M\times\R\to\R$  satisfy \mbox{\bf{(H1)--(H4)}}. Then there exists a constant $\alo>0$  such that for each $\lambda\in(0,\alo)$, equation \eqref{eq_H_lambda} has a  unique continuous viscosity solution $u^\lambda$ which is Lipschitz continuous, and 
$u^\lam$ converges uniformly, as $\lam\to 0$, to a Lipschitz function $u^0$ which is exactly a viscosity solution of \eqref{eq_G}.
\end{The}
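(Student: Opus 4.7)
The plan is to break the theorem into three stages: well-posedness and a uniform $L^\infty$ bound for $u^\lambda$, a uniform Lipschitz estimate, and finally the uniqueness of the limit obtained from Arzelà–Ascoli.

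\textbf{Stage 1 (existence, uniqueness, $L^\infty$ bound).} Existence and uniqueness of a continuous viscosity solution of \eqref{eq_H_lambda} for small $\lambda$ follows from Perron's method plus the comparison principle, the latter being available thanks to the strict monotonicity \textbf{(H3)} and the coercivity supplied by \textbf{(H2)}; this is the content of the later Proposition \ref{uni_bound}. To get a bound uniform in $\lambda$, I would fix any viscosity solution $v$ of \eqref{eq_G} (it exists because $c=c(\GG)$ is critical and $\GG$ is coercive) and choose a constant $C\geq\|v\|_\infty$. Then \textbf{(H3)} implies that $v-C$ is a subsolution and $v+C$ a supersolution of \eqref{eq_H_lambda}, since $H(x,Dv,\lam(v\mp C))\lessgtr H(x,Dv,0)=c(\GG)$ in the viscosity sense. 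Comparison gives $v-C\leq u^\lam\leq v+C$, hence a uniform-in-$\lambda$ bound $\|u^\lam\|_\infty\leq K$.

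\textbf{Stage 2 (Lipschitz estimate, Arzelà–Ascoli limit).} Shrinking $\alo$ so that $\alpha_0 K\leq\rro$, the argument of $H$ in \eqref{eq_H_lambda} stays in the compact range $\lam u^\lam\in[-\rro,\rro]$. By \eqref{coercive_forall_u} the family $\{H(\cdot,\cdot,u_0):u_0\in[-\rro,\rro]\}$ is uniformly coercive in $p$, so the equation forces $|Du^\lam|_x\leq L$ in the viscosity sense for a constant $L$ independent of $\lam$; standard viscosity arguments promote this to a uniform Lipschitz bound on $M$. Consequently $\{u^\lam\}_{0<\lam<\alo}$ is equicontinuous and bounded, so every sequence $\lam_n\to 0$ has a subsequence converging uniformly to some Lipschitz $u^0$. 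Since $H(x,p,\lam u^\lam(x))\to\GG(x,p)$ locally uniformly (continuity of $H$ at $u=0$ combined with the uniform bound on $u^\lam$), stability of viscosity solutions ensures $u^0$ solves \eqref{eq_G}.

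\textbf{Stage 3 (uniqueness of the limit, the main obstacle).} The crux is to rule out two distinct subsequential limits. I would introduce the half-relaxed limits $u^\pm(x)=\limsup^*/\liminf^*_{\lam\to 0}\,u^\lam$; both are Lipschitz viscosity solutions of \eqref{eq_G}, and uniform convergence is equivalent to $u^+=u^-$. Following the approximation strategy advertised in the introduction, the idea is to compare $u^\lam$ with a critical subsolution $w$ of \eqref{eq_G}, divide the difference $H(x,Du^\lam,\lam u^\lam)-H(x,Du^\lam,0)$ by $\lam$, and use the locally uniform limit \eqref{H_locaunif_conv} to produce in the limit an inequality
\[\int_{T^*M}\bigl(w(x)-u^0(x)\bigr)\,H_u(x,p,0)\,\mathrm{d}\tmu(x,p)\leq 0\]
against a Mather-type measure $\tmu$ obtained as a weak-$*$ accumulation of measures built from $u^\lam$ (this is the "approximation scheme selecting Mather measures" mentioned in the introduction). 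Since $H_u(\cdot,\cdot,0)>0$ by \textbf{(H4)}, this forces $u^+\leq w\leq u^-$ on a projected Mather set, and propagation via the comparison principle for \eqref{eq_G} spreads the equality to all of $M$.

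The main obstacle is precisely Stage 3: in the absence of smoothness of $H$ in $(x,p)$ and of any associated Hamiltonian flow, the Mather measures and their selection must be produced by purely variational/PDE means, which is where \eqref{H_locaunif_conv} and the convexity \textbf{(H1)} do essential work. Everything before Stage 3 is comparison-principle bookkeeping; everything delicate is concentrated in constructing and exploiting the correct limiting measure.
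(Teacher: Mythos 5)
Your Stages 1 and 2 are correct and coincide with what the paper does in Proposition \ref{uni_bound} (the paper gets the uniform Lipschitz bound slightly more directly, by observing that any subsolution $v$ of \eqref{eq_H_lambda} with $\|\lam v\|_\infty<\rro$ is a subsolution of $H(x,Dv,-\rro)=c(\GG)$, but your route through \eqref{coercive_forall_u} is equivalent). The gap is in Stage 3, and it is twofold. First, the entire technical content of the convergence proof is the \emph{construction} of the selecting measures, which you leave unspecified: the paper must freeze the unknown into a Lagrangian $\LL^\lam(x,v)=L(x,v,\lam u^\lam(x))$, show that $u^\lam$ is a weak KAM solution for $\LL^\lam$ so that a calibrated curve $\xilax:(-\infty,0]\to M$ through $x$ exists (Lemma \ref{existence_minicurve}), and then define $\tilde\mu^\lam_x$ by averaging along $\xilax$ with the specific exponential weight $e^{-\lam\int_0^s L_u(\xilax(\tau),\dxilax(\tau),0)\,d\tau}$; the choice of this weight is exactly what makes the accumulation points closed measures with zero excess action, i.e.\ Mather measures (Proposition \ref{weak_convergence}). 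Without a concrete family of measures tied to the point $x$ and to the curve along which $u^\lam$ calibrates, there is nothing to pass to the limit.

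Second, your deduction from the integral inequality is not valid: from $\int (w-u^0)\,H_u(\cdot,\cdot,0)\,d\tmu\leq 0$ with a positive weight you cannot conclude the pointwise inequality $u^+\leq w\leq u^-$ on the support of $\tmu$ (an integral sign condition never localizes to pointwise bounds), so the subsequent "propagation via the comparison principle" has no hypothesis to start from. The paper's mechanism is different and is split into two one-sided estimates: Lemma \ref{new_gen_1} shows every uniform subsequential limit $u$ satisfies $\int_{TM}L_u(x,v,0)\,u(x)\,d\tmu\geq 0$ for all $\tmu\in\wtM(L_\GG)$, hence $u\leq u^0:=\sup\{w\}$ over the class $\FF$ of subsolutions with that property; and Lemma \ref{new_gen_2} gives the \emph{pointwise} lower bound
\begin{equation*}
u^\lam(x)\geq w(x)-\frac{\int_{TM}L_u(y,v,0)\,w(y)\,d\tilde\mu^\lam_x}{\int_{TM}L_u(y,v,0)\,d\tilde\mu^\lam_x}+R(x,\lam)
\end{equation*}
for every critical subsolution $w$, obtained by integrating the calibration identity against the exponential weight and using a smooth approximation of $w$; for $w\in\FF$ the correction term is nonnegative in the limit because $L_u(\cdot,\cdot,0)<0$ and the limit measure is a Mather measure, giving $u\geq u^0$. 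You should supply the measure construction and replace the "forcing on the Mather set" step with this two-sided argument.
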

 
\begin{Rem}
An estimate of  $\alpha_0$ will be given in Section \ref{section_exist_and_uniq}. Actually,  $\alpha_0$ depends only on the  critical Hamiltonian $\GG(x,p)$ and  the value $\rro$, see \eqref{fsgw} for more details.
\end{Rem}

By Remark \ref{ex_nonlinear}, our Theorem \ref{mainresult1} then has the following immediate consequence :

\begin{Cor}
Let $\widehat{H}(x,p): T^*M\to \R$ be a continuous Hamiltonian which is coercive and convex in the momentum. Suppose that $g(u)\in C(\R)$ is a  strictly increasing function, and is differentiable at  $u=0$ with $g'(0)>0$. Then, for each small parameter $\lam>0$, the  equation 
\[g(\lam u(x) )+\widehat{H}(x, Du(x) )=c_0, \quad x\in M,\]
with $c_0$ the critical value of the critical Hamiltonian $g(0)+\widehat{H}(x,p)$,  
 has a  unique continuous viscosity solution $u^\lambda$. Moreover,  $u^\lambda$ converges uniformly to a single critical solution as $\lam\to 0$. 
\end{Cor}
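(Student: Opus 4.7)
The plan is to apply Theorem~\ref{mainresult1} directly to the separated Hamiltonian $H(x,p,u) := g(u) + \widehat{H}(x,p)$, following the observation recorded in Remark~\ref{ex_nonlinear}. The only work is to verify the four structural hypotheses \textbf{(H1)}--\textbf{(H4)} for this $H$. Conditions \textbf{(H1)} and \textbf{(H3)} are immediate: $p \mapsto H(x,p,u)$ differs from $\widehat{H}(x,\cdot)$ by an additive constant and hence inherits its convexity, while the strict monotonicity of $u \mapsto H(x,p,u)$ is simply the strict monotonicity of $g$. For \textbf{(H2)}, any choice $\rro > 0$ works, since $H(x,p,-\rro) = g(-\rro) + \widehat{H}(x,p)$ is coercive in the fibers whenever $\widehat{H}$ is.

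The one condition requiring slightly more care is \textbf{(H4)}. Since $g$ is differentiable at $0$ with $g'(0) > 0$, the partial derivative $H_u(x,p,0)$ exists and equals the positive constant $g'(0)$, which is trivially continuous on $T^*M$. The locally uniform limit in \eqref{H_locaunif_conv} reduces to
\[\lim_{u \to 0} \frac{g(u) - g(0)}{u} = g'(0),\]
and uniformity in $(x,p)$ on compact subsets of $T^*M$ is automatic because the difference quotient does not depend on $(x,p)$ at all.

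With \textbf{(H1)}--\textbf{(H4)} verified, I apply Theorem~\ref{mainresult1} to this $H$. The associated critical Hamiltonian is $\GG(x,p) = H(x,p,0) = g(0) + \widehat{H}(x,p)$, and its critical value $c(\GG)$ equals precisely the constant $c_0$ in the statement. Theorem~\ref{mainresult1} then delivers a threshold $\alo > 0$ such that for every $\lam \in (0,\alo)$ the equation $g(\lam u(x)) + \widehat{H}(x, Du(x)) = c_0$ admits a unique Lipschitz viscosity solution $u^\lam$, and $u^\lam$ converges uniformly as $\lam \to 0$ to a Lipschitz viscosity solution of the critical equation $g(0) + \widehat{H}(x, Du(x)) = c_0$. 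There is no substantive obstacle: the entire content of the Corollary is the hypothesis check above, and the only mildly subtle point, the uniformity in \textbf{(H4)}, becomes trivial because the $u$-dependence is decoupled from $(x,p)$ in this separated form.
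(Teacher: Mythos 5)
Your proposal is correct and follows exactly the paper's route: the Corollary is stated as an immediate consequence of Theorem \ref{mainresult1} via the verification of \textbf{(H1)}--\textbf{(H4)} for the separated Hamiltonian $H(x,p,u)=g(u)+\widehat{H}(x,p)$, which is precisely the content of Remark \ref{ex_nonlinear}. Your hypothesis check, including the observation that the uniformity in \textbf{(H4)} is automatic because the difference quotient is independent of $(x,p)$, matches the paper's (implicit) argument.
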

In view of this corollary, the assumptions of the current paper are indeed natural generalizations of that in \cite{DFIZ2016}.

If we replace the coercivity assumption {\bf(H2)} by the fiberwise superlinear growth assumption {\bf(H2*)} below,  then we can introduce the conjugated Lagrangian, and characterize the limit solution $u^0$ by making use of the tools in Aubry-Mather theory.
\begin{enumerate}
	\item [{\bf(H2*)}] There exists a constant $\rro>0$, such that $H(x,p, -\rro)$ is superlinear in the fibers, i.e.,
	\[\lim_{|p|_x\to \infty} \frac{H(x, p,-\rro)}{|p|_x}=+\infty,\qquad \textup{uniformly in} x\in M.\]
\end{enumerate}

Through the Legendre-Fenchel transformation, the Lagrangian $L$  associated to the Hamiltonian $H$ is given by
 \begin{align}\label{def_Lag}
	L(x, v, u)=\sup_{p\in T^*_xM}\{\du{p,v}_x-H(x,p, u)\},\quad  \forall~(x,v,u)\in TM\times\R.
\end{align}
Here, $\du{p,v}_{x}$ denotes the value of the linear form $p\in T_x^*M$ evaluated at $v\in T_xM$. 

Observe that assumptions {\bf(H2*)} and {\bf(H3)} together imply that 
for every  fixed $u_0\in(-\rro,+\infty)$,  the map $p\mapsto H(x, p, u_0)$ is superlinear in $p$, uniformly in $x\in M$, i.e.,
\[\lim_{|p|_x\to \infty} \frac{H(x, p,u)}{|p|_x}\geq \lim_{|p|_x\to \infty} \frac{H(x, p,-\rro)}{|p|_x}=+\infty,\quad \text{uniformly in~} x\in M, ~u\in(-\rro, +\infty).\] 
Therefore,  $L(x,v,u)$ is finite-valued  whenever $u\in(-\rro,+\infty)$.  But $L(x,v,u)$ may take the value $+\infty$ when $u< -\rro$. However, as we will see later, only the information on $TM\times(-\rro, +\infty)$ is needed for our purpose. Therefore, in the remainder of this paper, we only consider the Lagrangian $L(x, v, u): TM\times(-\rro,+\infty)\to\R$.

Here, we also point out that if one assumes that $H(x,p,u_0)$ is superlinear in the fibers for every $u_0\in\R$,
then $L(x,v,u)$ is finite-valued and well defined on the whole space $TM\times\R$.

It is a classical result in convex analysis that 
$L\in C(TM\times(-\rro,+\infty))$, and  is \emph{convex} and \emph{superlinear} in $v$,  see for instance \cite[Theorem A.2.6]{Cannarsa_Sinestrari_book}.
In what follows, we denote by \[L_\GG(x,v):=L(x,v,0)\] the Lagrangian associated to the critical  Hamiltonian $\GG(x, p)$.   
 This enables us to apply Aubry-Mather theory  to the critical Lagrangian $L_\GG$ to give  a characterization of the limit  $u^0$.
Even though the classical Aubry-Mather theory or weak KAM theory is established for $C^2$ Tonelli systems, most of the notions and results have their extensions to non-smooth ones, see Section \ref{section_pre} below. 

As we will see in Lemma \ref{L1_L4} below, the partial derivative of $L$ with respect to $u$ at $(x,v,0)$  exists and $L_u(x,v,0)\in C(TM)$. Let $h(y,x)$ be the Peierls barrier (see \eqref{def_peierls} for the definition ) and let $\wtM(L_\GG)$ be the set of all Mather measures (see \eqref{def_MaM} for the definition) for the Lagrangian $L_\GG$, we then address our second main result:
\begin{The}\label{mainresult2}
	Let $H(x,p,u)$ satisfy {\bf(H1)}, {\bf(H2*)}, {\bf(H3)} and {\bf(H4)}. Then
the limit solution $u^0$ obtained in Theorem \ref{mainresult1} can be characterized in either of the following two ways:
\begin{enumerate}[\rm(1)]
	\item $u^0(x)=\sup\limits_{w} w(x),$
	where the supremum is taken over all viscosity subsolutions $w$ of \eqref{eq_G} satisfying
\begin{equation*}
	\int_{TM} L_u(x,v,0)w(x)\,d\tilde{\mu}(x,v)\geqslant 0,	 \quad \text{for every~}  \tilde{\mu}\in \wtM(L_\GG).
\end{equation*}
	\item The limit solution 
	\begin{equation}\label{In_Math_meas}
		u^0(x)=\inf_{\tilde{\mu}\in \wtM(L_\GG)}\frac{\int_{TM}L_u(y,v,0)h(y,x)d\tilde{\mu}(y,v)}{\int_{TM}L_u(y,v,0)d\tilde{\mu}(y,v)}.
	\end{equation}
\end{enumerate}
\end{The}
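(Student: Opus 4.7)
The plan is to follow the variational approach of Davini--Fathi--Iturriaga--Zavidovique \cite{DFIZ2016}, adapted to the nonlinear setting via the envelope-type identity
\[L_u(x,v,0)=-H_u(x,p^\star,0),\]
where $p^\star=p^\star(x,v,0)$ is a maximizer in \eqref{def_Lag}, together with the fact that on $\mathrm{supp}(\tilde\mu)$ for any $\tilde\mu\in\wtM(L_\GG)$ the Fenchel equality $L_\GG(x,v)+\GG(x,p^\star)=p^\star\cdot v$ is saturated, and $p^\star$ agrees with the gradient of every critical solution on the projected Mather set.

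\emph{Step 1: $u^0$ lies in the admissible class of item (1).} Since Theorem \ref{mainresult1} makes $\{u^\lam\}$ equi-Lipschitz and equi-bounded, the gradients $Du^\lam$ remain in a fixed compact set; thus \textbf{(H4)} gives pointwise a.e.
\[\GG(x,Du^\lam)+\lam u^\lam\bigl[H_u(x,Du^\lam,0)+\vep_\lam(x)\bigr]=c(\GG),\]
with $\vep_\lam\to 0$ uniformly. Combining Fenchel's inequality $Du^\lam\cdot v\leq L_\GG(x,v)+\GG(x,Du^\lam)$, integrating against $\tilde\mu\in\wtM(L_\GG)$, and using the closedness of $\tilde\mu$ on the Lipschitz function $u^\lam$ (valid after standard mollification) together with Mather's identity $\int L_\GG\,d\tilde\mu=-c(\GG)$, one arrives at
\[\int_{TM} u^\lam\bigl[H_u(x,Du^\lam,0)+\vep_\lam(x)\bigr]\,d\tilde\mu\leq 0.\]
Passing to the limit $\lam\to 0$ via an upper-semicontinuity argument on $\mathrm{supp}(\tilde\mu)$ (using that $u^0$ is critical, so its gradient equals $p^\star$ on the Mather set) and the envelope identity above yields $\int u^0 L_u(x,v,0)\,d\tilde\mu\geq 0$.

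\emph{Step 2: Maximality and the Peierls formula.} For the reverse inequality $w\leq u^0$ whenever $w$ is admissible, I would invoke the approximation scheme of Section \ref{section_convergence}: for each small $\lam>0$ one perturbs $w$ by a scalar $c_\lam(w)$ so that $w+c_\lam(w)$ becomes a viscosity subsolution of \eqref{eq_H_lambda}, comparison then forces $w+c_\lam(w)\leq u^\lam$, and the optimal $c_\lam$ is determined by the dual linear programme of the Mather-measure constraint, so $c_\lam\to 0$ as $\lam\to 0$. Item (2) then proceeds by multiplying $w(x)\leq w(y)+h(y,x)$ by $-L_u(y,v,0)>0$ (positive because $H_u>0$) and integrating against $\tilde\mu$; using $\int L_u w\,d\tilde\mu\geq 0$ and $\int L_u\,d\tilde\mu<0$, rearrangement gives
\[w(x)\leq \frac{\int_{TM}L_u(y,v,0)\,h(y,x)\,d\tilde\mu(y,v)}{\int_{TM}L_u(y,v,0)\,d\tilde\mu(y,v)}.\]
Taking sup in $w$ and inf in $\tilde\mu$ yields one half of \eqref{In_Math_meas}; the opposite inequality is obtained by applying the same approximation scheme with $w=u^0$ to produce a Mather measure that saturates the bound at each $x$.

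\emph{Main obstacle.} The decisive difficulty is the linear-programming/selection step in Step 2, and equivalently the construction of the saturating measure for \eqref{In_Math_meas}. In the absence of smoothness of $H$ in $(x,p)$ and without any global Lipschitz bound on $H_u$ (so $H_u$ may degenerate away from $u=0$, cf.\ Remark \ref{non_neg}), the duality between admissible subsolutions and Mather measures must be established entirely by PDE means --- combining Perron's method, nonsmooth Fenchel--Legendre duality, and compactness of $\wtM(L_\GG)$ --- since no Hamiltonian flow is available at this level of generality. The locally uniform (rather than Lipschitz) convergence in \textbf{(H4)} also forces careful quantitative control of the remainder $H(x,Dw,\lam w)-\GG(x,Dw)-\lam w H_u(x,Dw,0)$ uniformly on compact sets of covectors.
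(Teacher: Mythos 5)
Your overall architecture is the right one (a DFIZ-style argument built on Mather measures), and the derivation of the upper bound in item (2) --- multiplying $u^0(x)\leq u^0(y)+h(y,x)$ by the negative weight $L_u(y,v,0)$, integrating against $\tilde\mu$, and invoking the constraint $\int L_u\,u^0\,d\tilde\mu\geq 0$ --- is exactly what the paper does. But two of your three main steps have genuine gaps, and in both cases the paper's mechanism is different from what you propose.

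First, your Step 1 runs through the Hamiltonian side: you integrate the pointwise a.e.\ equation for $u^\lam$ against $\tilde\mu$ and then pass to the limit using that ``the gradient of $u^0$ equals $p^\star$ on the Mather set.'' This does not work at the paper's level of generality: a Mather measure may be singular (so an a.e.\ identity for $Du^\lam$ gives no information $\tilde\mu$-almost everywhere), and the identification of $Du^0$ with the Legendre point on $\mathrm{supp}\,\tilde\mu$ is precisely the graph/regularity property that the paper explicitly warns may fail for continuous, non-strictly convex Hamiltonians. The paper's Lemma \ref{new_gen_1} avoids gradients entirely: since $u^{\lam_k}$ solves $\HH^{\lam_k}(x,Du)=c(\GG)$ with $c(\HH^{\lam_k})=c(\GG)$, the minimization formula \eqref{mini_pro} gives $\int_{TM}[L(x,v,\lam_k u^{\lam_k}(x))+c(\GG)]\,d\tilde\mu\geq 0$ for every closed measure, and one then expands $L(x,v,\lam_k u^{\lam_k})-L(x,v,0)$ using the locally uniform differentiability of $L$ in $u$ (Lemma \ref{L1_L4}(4)) on the compact support of $\tilde\mu$.

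Second, the maximality in item (1) and the lower bound in item (2) are where your proof is not actually a proof: the additive-constant perturbation $w+c_\lam(w)$ with $c_\lam\to 0$ ``determined by the dual linear programme'' is the comparison-principle route of \cite{CCIZ2019} that the paper deliberately avoids, and you give no argument for $c_\lam\to 0$; likewise the ``saturating measure at each $x$'' is never constructed (applying your scheme with $w=u^0$ would only reproduce the constraint $\int L_u u^0\,d\tilde\mu\geq 0$, not the Peierls barrier formula). The paper's actual engine is Lemma \ref{new_gen_2}: integrating a smooth approximate subsolution along the backward calibrated curve $\xilax$ of $u^\lam$ with the weight $e^{-\lam\int_0^s L_u(\xilax(\tau),\dxilax(\tau),0)\,d\tau}$ yields the quantitative bound $u^\lam(x)\geq w(x)-\int L_u w\,d\tilde\mu_x^\lam/\int L_u\,d\tilde\mu_x^\lam+R(x,\lam)$, and Proposition \ref{weak_convergence} shows the measures $\tilde\mu_x^\lam$ accumulate on Mather measures; maximality follows at once. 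For the lower bound in \eqref{In_Math_meas} the paper takes $w=-h(\cdot,x)$, obtains the inequality first only for $x$ in the projected Aubry set (where $h(x,x)=0$), and then extends it to all of $M$ via the comparison property on $\cA$ (Proposition \ref{proper_criti_sol}(4)). This restriction to $\cA$ and subsequent extension is an essential step that is absent from your sketch.
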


\begin{Rem}[Vanishing discount problem]
For the discounted Hamiltonian, $L_u(x,v,0)$ is identically constant. Then Mather measure $\tilde{\mu}$ in the representation formula \eqref{In_Math_meas} can be replaced by its corresponding projected measure on $M$ defined by \[\mu:=\pi_\#\tilde{\mu},\] where $\pi: TM\to M$ is the canonical projection, namely $\mu(A)=\tilde{\mu}(\pi^{-1}(A))$ for each Borel subset $A\subset M$. Then, our theorem above coincides exactly with that of \cite{DFIZ2016}. 
\end{Rem}

 If we assume that $H$ satisfies Tonelli's conditions, then the following Lipschitz property, also known as Mather's graph property, holds:  the support  $\textup{supp}\tilde{\mu}\subset TM$ of each Mather measure $\tilde{\mu}$ is a Lipschitz graph over the base manifold $M$, i.e., the restriction of $\pi$ to $\textup{supp}\tilde{\mu}$ is a
 bi-Lipschitz homeomorphism. So, the measure $\tilde{\mu}$ in the representation formula \eqref{In_Math_meas} can still be replaced by its projected measure  $\mu$.
 However, this graphic property may not hold for general non-smooth and (non-strictly) convex Hamiltonians.

\subsection{Notes on our result and method}\label{methods}

Through non-smooth analysis,
we will generalize the techniques introduced in \cite{WYZ2020, DFIZ2016} to prove our convergence results. Compared with previous works, our result has the following features:

\begin{enumerate}[\rm(1)]
	\item The aforementioned condition {\bf(D3)} and the restriction \eqref{old_rem} in condition {\bf(D1)}, which were used in \cite{CCIZ2019}, can be totally removed now.
	\item Even for the nonlinear discounted system $H(x,p,u)=f(x,u)+F(x,p)$ that was discussed in \cite{Gomes_Mitake_Tran2018,CCIZ2019}, our result contains a novelty: the previous restriction \eqref{loc_lip} in condition {\bf(C2)} can be removed, since  $f\in C^1(M\times\R)$ with $f_u>0$  is enough to guarantee our assumptions {\bf(H3)--\bf(H4)}, see also Remark \ref{rem_1}.	
	\item Different from \cite{WYZ2020}, our proof is not  based on the dynamical theory of contact systems. So, we do not require smoothness on  $T^*M$, and only the partial derivative  $H_u(x,p, 0)$ is needed. In addition,
	 to achieve the convergence result (Theorem \ref{mainresult1}),  the fiberwise superlinear growth assumption can be weaken to the coercivity assumption. Also, we do not require the Lipschitz condition $|H_u|\leq C$.
 \item 
 Instead of using projected Mather measures on $M$,
 our characterization formula \eqref{In_Math_meas}  uses  Mather measures on the tangent bundle $TM$. This is because formula \eqref{In_Math_meas} is a weighted integral, where the weight function $L_u(x,v,0)$ shall depend on $v$ in general.  But Mather's graph property may not be true for general non-smooth Hamiltonians.
\end{enumerate}

Next, we  compare our method with that of previous works.  Analogous to \cite{DFIZ2016}, we need the tools of Aubry-Mather theory for non-smooth systems, especially the notion of Mather measure. 
Different from the approach of \cite{CCIZ2019} taking advantage of the comparison principle, here we will use an approximation scheme to select suitable Mather measures (see Section \ref{section_convergence}).  This approximation scheme will play a central role in the convergence problem.

To be more precise, we first review the vanishing discount problem.  It is well known that each solution $u^\lam$ of the equation $\lam u(x)$ $+\GG(x, Du(x))$ $=c(\GG)$ has the following expression: 
$$
	u^\lam(x)=\inf_{\xi^\lam\in \textup{Lip},\, \xi^\lam(0)=x}\int_{-\infty}^0 e^{\lam s}\Big[L_\GG(\xi^\lam(s),\dot{\xi}^\lam(s))+c(\GG)\Big]\,ds,
$$
which is  an infimum of Laplace transforms, and the infimum can be attained by a Lipschitz
 minimizing curve $\xilax:(-\infty, 0]\to M$ with $\xilax(0)=x$. Along this minimizer, the authors of \cite{DFIZ2016}  defined a probability measure $\tilde{\mu}_x^\lam$ by using a Laplace type average:
\begin{equation}\label{discount_meas}
\int_{TM} f(y,v) d\tilde{\mu}^\lam_x(y,v):= \lam\int_{-\infty}^0e^{\lam s}    f(\xilax(s),\dxilax(s))\, ds, \quad\text{for every~} f\in C_c(TM).
\end{equation}
They have showed that all accumulation points of the sequence $\{\tilde{\mu}_x^\lam\}_{\lam>0}$ are Mather measures. The asymptotic convergence of solutions then follows from this approximation scheme. 

For general Hamiltonian $H(x, p, u)$ depending nonlinearly on $u$, the solution $u^\lam$ has no expression as an infimum of Laplace transforms. But if  $H(x, p, u)$ is smooth and satisfies some Tonelli-type conditions, by weak KAM theory of smooth contact Hamiltonians \cite{Wang_Wang_Yan2019_Variational} the solution $u^\lam$ of equation \eqref{eq_H_lambda} admits a $C^1$-minimizing curve $\xilax$ satisfying
\[u^\lam(\xilax(0))=u^\lam(\xilax(-T))+\int_{-T}^0L\Big(\xilax(s),\dxilax(s),\lam u^\lam(\xilax(s))\Big)+c(\GG)\,ds,\quad \text{for all~} T>0.\]
As a generalization of \eqref{discount_meas}, the authors of \cite{WYZ2020} defined a Radon measure $\tilde{\mu}_x^\lam$ by
\begin{align}\label{WYZ_meas}
	\int_{TM} f\, d\tilde{\mu}^\lam_x
	:= \lam \int_{-\infty}^0    f(\xilax(s),\dxilax(s)) \exp\left[-\lam\int_0^s\int_0^1 L_u(\xilax(t),\dxilax(t),\tau \lam u^\lam(\xilax(t))) d\tau dt\right] ds
\end{align} 
for every $f\in C_c(TM)$.  

The above definition of measure cannot be applied directly to the setting of the current paper, since our assumption {\bf(H4)} only assumes that   $H_u(x,p, u)$ exists  at $u=0$, which therefore means $L_u(x, v, u)$ exists only at $u=0$ (see Lemma \ref{L1_L4}). 
To overcome this difficulty,  we introduce a new Lagrangian $\LL^\lam(x, v):=L(x,v,\lam u^\lam(x))$ on $TM$, for each solution $u^\lam$ of equation \eqref{eq_H_lambda}. Then, it is easy to observe that $u^\lam$ is still a weak KAM solution of $\LL^\lam$, which therefore gives rise to a Lipschitz minimizing curve $\xilax (s):(-\infty, 0]\to M$ with respect to $u^\lam$ and $\LL^\lam$. Now, inspired by the treatment of \cite{WYZ2020}  on proving \eqref{WYZ_meas} converging to a Mather measure, 
 we define a  probability measure  $\tilde{\mu}^\lam_x$ as follows:
 \begin{equation}\label{our_meas}
	\int_{TM} f(y,v) d\tilde{\mu}^\lam_x(y,v):= \frac{\int_{-\infty}^0    f(\xilax(s),\dxilax(s)) e^{-\lam\int_0^s L_u(\xilax(\tau),\dxilax(\tau),0)\ d\tau}\, ds}{ \int_{-\infty}^0   e^{-\lam\int_0^s L_u(\xilax(\tau),\dxilax(\tau),0)\ d\tau}\, ds},
\end{equation}
where  only $L_u(x,v,0)$ is needed in the exponent term. 
 We can still prove that the accumulation points of the sequence $\{\tilde{\mu}_x^\lam\}_{\lam>0}$ are Mather measures. See Section \ref{section_convergence} for more details.

\medskip 

The paper is organized as follows. Section \ref{section_pre} recalls some  basic facts on viscosity solutions of the Hamilton-Jacobi equations, and also collects some necessary results in
 Aubry-Mather theory and weak KAM theory under a non-smooth setting. In Section \ref{section_exist_and_uniq}, under assumptions {\bf(H2)--(H3)}, we show the existence and uniqueness of  viscosity solutions  to  equation \eqref{eq_H_lambda} for each small parameter $\lam>0$. We also give some key estimates for the whole family of solutions. Then, as a corollary, we study a special case and obtain a convergence result under the assumption that constant functions are critical subsolutions.
Section \ref{section_convergence} is the main part of this paper, where we prove our convergence result under assumptions {\bf(H1)--(H4)}. To prove Theorem \ref{mainresult1}, we first provide a possible limit $u^0$, then we show that  every accumulation point $u$ of the family of solutions $\{u^\lam\}_{\lam>0}$ is equal to $u^0$. To this end, we need an approximation scheme to select suitable Mather measures. To prove Theorem \ref{mainresult2},  we make use of Lemma \ref{new_gen_2} and the properties of Peierls barrier and Mather measures.

\section{Preliminaries}\label{section_pre}

In this section, we first recall some basic facts about the viscosity solutions of the Hamilton-Jacobi equations. Next,
we provide  some useful results from Aubry-Mather theory and weak KAM  theory which are necessary for the purpose of this paper. The classical 
Aubry-Mather theory  is  established for $C^2$ Tonelli systems, see Mather's original papers \cite{Mather1991, Mather1993}.  As for weak KAM theory of $C^2$ Tonelli systems, the reader can refer to Fathi's book \cite{Fathi_book}. Here, we also refer the interested reader to \cite{Maro_Sorrentino2017, Wang_Wang_Yan2019_Variational, Wang_Wang_Yan2019_Aubry, CCWY2019, Cannarsa2019herglotz} 
for an analogue of Aubry-Mather theory or weak KAM theory of contact Hamiltonians.

However,  our systems in this paper are required to be purely continuous on $T^*M$ and are therefore lack of Hamiltonian or Lagrangian dynamics. Thus  the generalizations of Aubry-Mather theory and weak KAM theory for non-smooth systems are needed. The main references are  \cite{Fathi_Siconolfi2005, Davini_Zavidovique2013,DFIZ2016,Ishii2008}. In what follows, $M$ is always assumed to be a connected and compact smooth manifold without boundary.

\subsection{Viscosity solutions of the Hamilton-Jacobi equations}
Let us first review the concept of viscosity solutions. We consider the following stationary Hamilton-Jacobi equation
\begin{equation}\label{def_hj_eq}
	F(x, Du(x), u(x))=c,\quad x\in M,
\end{equation}
where $F\in C(T^*M\times\R)$.
	 A function $u: M\to\R$ is a \emph{viscosity subsolution} of the H-J equation \eqref{def_hj_eq} if, for any  $\varphi\in C^1(M)$ and any $x_0\in M$ satisfying $\varphi \geqslant u$  and $\varphi(x_0)=u(x_0)$, we have 
	 \[F(x_0, D\varphi(x_0), \varphi(x_0))\leqslant c.\]
Similarly, $u$ is a \emph{viscosity supersolution} if, for any  $\psi\in C^1(M)$ and any $x_0\in M$ satisfying  $\psi\leqslant u$  and $\psi(x_0)=u(x_0)$, we have 
\[F(x_0, D\psi(x_0), \psi(x_0)) \geqslant c.\] 
Finally,  $u$ is a \emph{viscosity solution} of \eqref{def_hj_eq} if it is both a subsolution and a supersolution.

For the applications of the current paper, we need  the following comparison principle. 
\begin{Lem}\label{The_comp_prin}
 Let $F(x,p,u)\in C(T^*M\times\R)$ be strictly increasing in $u$. Assume that the equation 
 \begin{equation*}
   	F(x ,Du(x), u(x))=c, \quad x\in M ,
   \end{equation*}
  admits a Lipschitz continuous viscosity solution. Then for any  continuous viscosity subsolution $f$ and any continuous  viscosity supersolution $g$, we have   	
 $f \leqslant g $.
   \end{Lem}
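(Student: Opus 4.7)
The plan is to use the given Lipschitz viscosity solution, call it $w$, as an intermediate barrier: I would first establish $f \leq w$ and then, by a symmetric argument, $w \leq g$. Since $w$ is by hypothesis both a sub- and a supersolution, this reduces the comparison to two sub-vs-super comparisons where one side is Lipschitz, and the Lipschitz constant is exactly what will let the doubling-of-variables method work without any coercivity or convexity assumption on $F$ in $p$.

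To prove $f \leq w$, suppose for contradiction that $\delta := \max_{x\in M}(f(x)-w(x)) > 0$ at some $x_0$, and for $\varepsilon > 0$ consider the standard penalized function
\[
\Phi_\varepsilon(x,y) = f(x) - w(y) - \frac{1}{2\varepsilon^2}d(x,y)^2
\]
on $M\times M$. Let $(x_\varepsilon,y_\varepsilon)$ be a maximizer; by compactness of $M$ and the usual argument, $(x_\varepsilon,y_\varepsilon)$ clusters at a diagonal point maximizing $f-w$, so after extraction $x_\varepsilon,y_\varepsilon \to x_0$. Comparing $\Phi_\varepsilon(x_\varepsilon,y_\varepsilon)$ with $\Phi_\varepsilon(x_\varepsilon,x_\varepsilon)$ and using that $w$ is $L$-Lipschitz gives
\[
\frac{1}{2\varepsilon^2}d(x_\varepsilon,y_\varepsilon)^2 \leq w(x_\varepsilon)-w(y_\varepsilon) \leq L\, d(x_\varepsilon,y_\varepsilon),
\]
so $d(x_\varepsilon,y_\varepsilon)\leq 2L\varepsilon^2$ and the penalization gradient $p_\varepsilon$ (common to both equations after smoothing $d^2$ near the diagonal, where it is smooth) satisfies $|p_\varepsilon|\leq 2L$. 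Applying the subsolution property for $f$ at $x_\varepsilon$ and the supersolution property for $w$ at $y_\varepsilon$ against the test functions $y\mapsto f(x_\varepsilon)-\Phi_\varepsilon(x_\varepsilon,y)$ and $x\mapsto w(y_\varepsilon)+\Phi_\varepsilon(x,y_\varepsilon)$, I obtain
\[
F(x_\varepsilon,p_\varepsilon,f(x_\varepsilon)) \leq c \leq F(y_\varepsilon,p_\varepsilon,w(y_\varepsilon)).
\]

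Now I would pass to the limit. Split the inequality as
\[
F(x_\varepsilon,p_\varepsilon,f(x_\varepsilon)) - F(x_\varepsilon,p_\varepsilon,w(y_\varepsilon)) \leq F(y_\varepsilon,p_\varepsilon,w(y_\varepsilon)) - F(x_\varepsilon,p_\varepsilon,w(y_\varepsilon)).
\]
The right-hand side tends to $0$ because $x_\varepsilon,y_\varepsilon \to x_0$, the momenta $p_\varepsilon$ stay inside the compact ball $\{|p|_x\leq 2L\}$, and $F$ is uniformly continuous on the compact set $M\times\{|p|_x\leq 2L\}\times[-R,R]$ with $R=\max(\|f\|_\infty,\|w\|_\infty)+1$. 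For the left-hand side, $f(x_\varepsilon)-w(y_\varepsilon)\to\delta>0$, and the key quantitative use of the hypothesis is: since $F$ is continuous and strictly increasing in $u$, the map $(x,p,s)\mapsto F(x,p,s+\delta/2)-F(x,p,s)$ is continuous and strictly positive on the compact set $M\times\{|p|_x\leq 2L\}\times[-R,R]$, hence attains a positive minimum $\sigma>0$. For $\varepsilon$ small enough $f(x_\varepsilon)-w(y_\varepsilon)\geq\delta/2$, so the left-hand side is bounded below by $\sigma$. Letting $\varepsilon\to 0$ yields $\sigma\leq 0$, a contradiction.

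The symmetric argument with $\Phi_\varepsilon(x,y) = w(x)-g(y) - \tfrac{1}{2\varepsilon^2}d(x,y)^2$ proves $w\leq g$, where again Lipschitzness of $w$ (this time on the $x$-slot) gives $d(x_\varepsilon,y_\varepsilon)\leq 2L\varepsilon^2$ and $|p_\varepsilon|\leq 2L$. Combining, $f\leq w\leq g$. The main obstacle is really the absence of coercivity or any growth assumption on $F$ in $p$: without the Lipschitz intermediate function $w$, the doubling-of-variables penalty gradient could blow up and the continuity of $F$ on compact $p$-sets would no longer suffice. The Lipschitz regularity of $w$ is precisely what confines $p_\varepsilon$ to a fixed compact set, after which the argument reduces to the standard "strict monotonicity plus compactness gives a uniform modulus" trick. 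A minor technical point is that $d(\cdot,\cdot)^2$ is only guaranteed to be smooth near the diagonal on a Riemannian manifold, but since $d(x_\varepsilon,y_\varepsilon)=O(\varepsilon^2)$, for small $\varepsilon$ the maximizer lies well inside the smooth regime, so the test functions used above are genuinely $C^1$.
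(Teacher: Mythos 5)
Your argument is correct in substance. The paper itself does not prove this lemma --- it defers to \cite[Theorem 3.2]{CCIZ2019} --- and your proof is essentially the standard one given there: use the Lipschitz solution $w$ as an intermediate barrier, run doubling of variables for $f$ vs.\ $w$ and for $w$ vs.\ $g$, and exploit the Lipschitz bound on $w$ (by comparing the maximizer with the appropriate diagonal point) to confine the penalization momenta to the compact set $\{|p|_x\le 2L\}$, which is exactly what replaces coercivity of $F$ in $p$; strict monotonicity plus compactness then yields the uniform gap $\sigma>0$ that produces the contradiction. You correctly identify the two places where Lipschitzness is used (on the $y$-slot in the first comparison, on the $x$-slot in the second) and correctly note that $d(\cdot,\cdot)^2$ is smooth near the diagonal so the test functions are admissible. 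One notational slip worth fixing: the test functions as written, $y\mapsto f(x_\varepsilon)-\Phi_\varepsilon(x_\varepsilon,y)$ and $x\mapsto w(y_\varepsilon)+\Phi_\varepsilon(x,y_\varepsilon)$, simplify to $w(y)+\tfrac{1}{2\varepsilon^2}d(x_\varepsilon,y)^2$ and $f(x)-\tfrac{1}{2\varepsilon^2}d(x,y_\varepsilon)^2$, which contain the unknown functions themselves and are therefore not legitimate $C^1$ test functions; what you mean is $\psi(y)=f(x_\varepsilon)-\tfrac{1}{2\varepsilon^2}d(x_\varepsilon,y)^2-\Phi_\varepsilon(x_\varepsilon,y_\varepsilon)$ touching $w$ from below at $y_\varepsilon$, and $\varphi(x)=w(y_\varepsilon)+\tfrac{1}{2\varepsilon^2}d(x,y_\varepsilon)^2+\Phi_\varepsilon(x_\varepsilon,y_\varepsilon)$ touching $f$ from above at $x_\varepsilon$. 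The viscosity inequalities you then state are the correct ones, so this does not affect the argument. A second cosmetic point: the two momenta live in different fibers $T^*_{x_\varepsilon}M$ and $T^*_{y_\varepsilon}M$; they are not literally ``common,'' but they have equal norm $d(x_\varepsilon,y_\varepsilon)/\varepsilon^2\le 2L$ and converge to the same limit, so your uniform-continuity passage to the limit is unaffected.
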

   This result is well known in the literature. 
  For the case where $M$ is a compact manifold, we refer the reader to  \cite[Theorem 3.2]{CCIZ2019} for a complete proof.

Next, we focus on the Hamiltonians defined on the cotangent bundle. Let $H(x,p)$ be continuous on $T^*M$, then we state two assumptions that we may use in the sequel
\begin{enumerate}
	\item (Convexity) $H$ is \emph{convex} in  $p$; 
	\item (Coercivity) $H$ is \emph{coercive} in $p$, i.e., 
$\lim_{|p|_x\to+\infty}H(x,p)=+\infty$, uniformly in  $x\in M.$
\end{enumerate}

 The following result is classical,  see for instance \cite{Barles_book}.
\begin{Pro}\label{sublip}
Let $H(x,p)\in C(T^*M)$ be coercive in $p$ and let $c\in\R$. Then
	any continuous viscosity subsolution of $H(x, Du(x))=c$ is  Lipschitz continuous. Moreover, the Lipschitz constant is bounded above by $\rho_c$ with
	$$\rho_c=\sup\{|p|_x : H(x,p)\leq c\}.$$
\end{Pro}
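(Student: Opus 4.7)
The plan is to deduce the Lipschitz bound directly from the viscosity subsolution property by comparing $u$ against the test function $\phi_y(x):=u(y)+\rho\,d(x,y)$ for $\rho$ slightly larger than $\rho_c$. A first, easy observation is that continuity of $H$, uniform coercivity in $p$, and compactness of $M$ together imply $\rho_c<+\infty$: coercivity yields an $R>0$ with $H(x,p)>c$ whenever $|p|_x\geq R$, so $\rho_c\leq R$.

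Fix any $\rho>\rho_c$ and $y\in M$. The goal is to show $u(x)\leq \phi_y(x)$ for every $x\in M$; the symmetric bound obtained by interchanging $x$ and $y$, together with the limit $\rho\downarrow\rho_c$, will then deliver the desired Lipschitz constant $\rho_c$. Suppose, for contradiction, that $u-\phi_y$ attains a strictly positive maximum at some $\bar x\in M$; since $(u-\phi_y)(y)=0$, necessarily $\bar x\neq y$. The Riemannian distance $d(\cdot,y)$ is locally semiconcave on $M\setminus\{y\}$, a classical consequence of the smoothness of the exponential map. Hence, if $\gamma:[0,T]\to M$ is any unit-speed minimizing geodesic from $y$ to $\bar x$, the covector $p_\gamma\in T_{\bar x}^*M$ metrically dual to $\dot\gamma(T)$ satisfies $|p_\gamma|_{\bar x}=1$ and $p_\gamma\in D^+d(\cdot,y)(\bar x)$, and semiconcavity supplies a function $\psi\in C^1$ defined near $\bar x$ with $\psi\geq d(\cdot,y)$, $\psi(\bar x)=d(\bar x,y)$, and $D\psi(\bar x)=p_\gamma$.

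Setting $p:=\rho p_\gamma$ and combining the inequality $\phi_y\leq u(y)+\rho\psi$ near $\bar x$ with the maximum property at $\bar x$, the $C^1$ function $x\mapsto u(y)+\rho\psi(x)+[u(\bar x)-\phi_y(\bar x)]$ lies above $u$ in a neighborhood of $\bar x$ and agrees with $u$ at $\bar x$, with gradient $p$. The viscosity subsolution property therefore forces $H(\bar x,p)\leq c$, whence $|p|_{\bar x}\leq\rho_c$, contradicting $|p|_{\bar x}=\rho>\rho_c$. The main obstacle is precisely the construction of a smooth test function touching $u$ from above at $\bar x$: this is genuinely subtle only when $\bar x$ lies on the cut locus of $y$ so that $d(\cdot,y)$ fails to be differentiable there, and this is exactly what the semiconcavity of the Riemannian distance is tailored to absorb.
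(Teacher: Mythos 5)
Your argument is correct: it is the classical comparison-with-cones proof (fix $\rho>\rho_c$, touch $u$ from above by $u(y)+\rho\,d(\cdot,y)$ at an interior maximum, use semiconcavity of the Riemannian distance off the diagonal to produce a $C^1$ test function with gradient of norm $\rho$, and contradict the definition of $\rho_c$), which is exactly the standard route; the paper itself gives no proof and simply cites Barles' book, where this is the argument used. The only point worth noting is that the paper's definition of subsolution asks for test functions dominating $u$ globally on $M$, whereas your $\psi$ is only local near $\bar x$ — this is harmless since the local and global formulations are equivalent for continuous $u$, but a one-line remark to that effect would make the step airtight.
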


Now, we define the {\em critical value} $c(H)$ associated with $H$ as follows: 
\begin{equation}
	c(H):=\inf\{c\in\R ~:~ H(x,Du(x))=c\ \text{~admits a viscosity subsolution}\}. 
\end{equation}
Then we have the following well known property, see \cite{Lions_Papanicolaou_Varadhan1987}. 
\begin{Pro}\label{crit_val_solu}
	Let $H(x,p)\in C(T^*M)$ be coercive in $p$. Then the critical value $c(H)$ is finite, and $c(H)$ is the unique real number $c$  such that the H-J equation
$H(x,Du(x))=c,$ $x\in M,$
admits viscosity solutions.
\end{Pro}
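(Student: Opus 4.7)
The plan is to establish finiteness of $c(H)$ and then the existence and uniqueness of the value admitting viscosity solutions through Arzel\`a--Ascoli compactness, Perron's method, and a doubling-of-variables argument. For finiteness, since $H(x,0)$ is continuous and $M$ is compact, the constant function $u\equiv 0$ is a classical subsolution of $H(x,Du)=c$ whenever $c\geq \max_{x} H(x,0)$, giving $c(H)\leq \max_{x} H(x,0)<\infty$. For the lower bound, any continuous viscosity subsolution $u$ of $H(x,Du)=c$ attains its maximum at some $x_0\in M$, and the constant test function $\varphi\equiv u(x_0)$ (which satisfies $\varphi\geq u$ with equality at $x_0$) yields $H(x_0,0)\leq c$, hence $c(H)\geq \min_{x} H(x,0)>-\infty$.

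For existence at $c=c(H)$, I would take a sequence $c_n\downarrow c(H)$ with continuous viscosity subsolutions $w_n$ of $H(x,Du)=c_n$, normalized by $w_n(x_0)=0$ at some fixed point $x_0$. By Proposition \ref{sublip}, these are equi-Lipschitz with constant bounded by $\rho_{c_1}$, hence uniformly bounded on the compact manifold $M$; Arzel\`a--Ascoli together with the stability of viscosity subsolutions under uniform convergence then produces a continuous viscosity subsolution $\underline{u}$ of $H(x,Du)=c(H)$. Perron's method applied to the class of subsolutions lying below a fixed upper bound (the bound comes again from the Lipschitz estimate of Proposition \ref{sublip} together with the normalization) yields a continuous viscosity solution of the critical equation.

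For uniqueness of the critical value, the case $c<c(H)$ is immediate because solutions are subsolutions and $c(H)$ is the infimum of such $c$. The case $c>c(H)$ is more delicate. Suppose $u_1$ and $u_2$ are continuous viscosity solutions of $H(x,Du)=c_1$ and $H(x,Du)=c_2$ respectively, with $c(H)\leq c_1<c_2$. Both $u_i$ are Lipschitz by Proposition \ref{sublip}. Applying the doubling-of-variables technique to the penalization
\[\Phi_\varepsilon(x,y):=u_1(x)-u_2(y)-\varepsilon^{-1}d(x,y)^2\]
on $M\times M$, I would examine the viscosity subsolution inequality for $u_1$ at the first coordinate and the supersolution inequality for $u_2$ at the second coordinate of the maximum point $(x_\varepsilon,y_\varepsilon)$; this produces momenta $p_\varepsilon\in T^*_{x_\varepsilon}M$ and $q_\varepsilon\in T^*_{y_\varepsilon}M$ of bounded norm, related via parallel transport along a short minimizing geodesic, satisfying $H(x_\varepsilon,p_\varepsilon)\leq c_1$ and $H(y_\varepsilon,q_\varepsilon)\geq c_2$. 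Passing to the limit as $\varepsilon\to 0$ (using $d(x_\varepsilon,y_\varepsilon)\to 0$ and extracting a convergent subsequence from $\{x_\varepsilon\}$) and invoking continuity of $H$ yields a point $z\in M$ and a covector $p\in T_z^*M$ with $c_2\leq H(z,p)\leq c_1$, the desired contradiction.

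The main obstacle is this last step: because $H$ does not depend on $u$, Lemma \ref{The_comp_prin} is not directly applicable, and one must carry out the doubling-of-variables argument intrinsically on the Riemannian manifold $M$, in particular handling parallel transport carefully when identifying the momenta $p_\varepsilon$ and $q_\varepsilon$ so that $|p_\varepsilon-q_\varepsilon|\to 0$ as $\varepsilon\to 0$. Everything else reduces to standard compactness and stability properties of viscosity solutions.
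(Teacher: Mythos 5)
Your treatment of finiteness and of uniqueness is sound: the constant function $0$ as a classical subsolution gives $c(H)\leq\max_xH(x,0)$, the constant test function at a maximum point of any subsolution gives $c(H)\geq\min_xH(x,0)$, and the doubling-of-variables argument correctly shows that viscosity solutions cannot exist for two distinct values of $c$. You are also right that this last step must be done by hand, since Lemma \ref{The_comp_prin} requires strict monotonicity in $u$; note that no convexity of $H$ is needed anywhere in it. The paper itself offers no proof of this proposition --- it is quoted as classical from Lions--Papanicolaou--Varadhan --- so the relevant comparison is with that standard argument.

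The gap is in the existence step. Arzel\`a--Ascoli does produce a Lipschitz viscosity subsolution of $H(x,Du)=c(H)$, but Perron's method cannot be run with ``a fixed upper bound'' as the ceiling: Ishii's construction requires the upper barrier to be a viscosity \emph{supersolution} of the same equation, precisely so that, at a point where the candidate fails the supersolution test, one can check it lies strictly below the barrier and perform the local bump. A constant is a supersolution of $H(x,Du)=c(H)$ only where $H(x,0)\geq c(H)$, which fails in general: for $H(x,p)=|p|_x^2+V(x)$ one has $c(H)=\max_MV$, and constants fail the supersolution test off the maximum set of $V$. Likewise the normalized supremum $\sup\{v:\ v\ \text{subsolution},\ v(x_0)=0\}$ is a solution only on $M\setminus\{x_0\}$ and is a supersolution at $x_0$ only for suitably chosen base points (those in the Aubry set); producing a critical supersolution is essentially equivalent to the statement you are trying to prove. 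The classical way to close this is the ergodic (vanishing discount) approximation: solve $\lambda u_\lambda+H(x,Du_\lambda)=0$, where the constants $\pm\lambda^{-1}\max_x|H(x,0)|$ genuinely are sub- and supersolutions and comparison holds by monotonicity in $u$; coercivity makes $\{u_\lambda\}_\lambda$ equi-Lipschitz, so $-\lambda u_\lambda$ has oscillation $O(\lambda)$ and converges along a subsequence to a constant $c$ while $u_\lambda-\min_Mu_\lambda$ converges to a solution of $H(x,Du)=c$; your uniqueness argument then forces $c=c(H)$.
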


As $M$ is compact, we use $\|f\|_\infty:=\sup_{x\in M}|f(x)|$ to denote the sup-norm  of the  function $f\in C(M)$.
For our applications, we need  a standard smooth approximation result  for subsolutions.  See for instance \cite[Theorem 10.6]{Fathi2012} for the case where $M$ is a compact manifold.

\begin{Pro}\label{smooth_sub_approx}
Let $H(x,p)\in C(T^*M)$ be  convex in $p$.
If $u: M\to\R$ is a  Lipschitz viscosity subsolution of the equation $H(x, Du(x))$ $=c$, then for every $\varepsilon>0$, we can find a $C^\infty$ function $w: M\to\R$	such that $\|u-w\|_\infty\leqslant \varepsilon$, $H(x, Dw(x))\leqslant c+\varepsilon$ for every $x\in M$, and $\|Dw\|_\infty\leqslant\textup{Lip}(u)+1$. 
\end{Pro}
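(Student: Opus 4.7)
The strategy is a classical mollification argument that exploits the convexity of $H$ in the momentum variable together with Jensen's inequality. The first observation is that, since $u$ is Lipschitz and $H(x,\cdot)$ is convex, the viscosity subsolution property is equivalent to the pointwise a.e.\ condition $H(x,Du(x))\leq c$ for Lebesgue-almost every $x\in M$, where $Du$ denotes the a.e.\ differential guaranteed by Rademacher's theorem. This reduction is standard for convex first-order equations.

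The plan is to perform a local mollification in coordinate charts and then patch with a partition of unity. Fix a finite atlas $\{(U_i,\varphi_i)\}_{i=1}^N$ of $M$ with subordinate smooth partition of unity $\{\chi_i\}$. In each chart push $u$ forward to $\varphi_i(U_i)\subset\R^n$, extend it by McShane to all of $\R^n$ preserving the Lipschitz constant $K:=\textup{Lip}(u)$, convolve with a standard mollifier $\rho_\delta$ to obtain a smooth $u_{\delta,i}$ with $|Du_{\delta,i}|\leq K$ and $u_{\delta,i}\to u$ uniformly, and pull it back to a smooth function $\widetilde u_{\delta,i}$ on a neighborhood of $\textup{supp}\,\chi_i$ in $U_i$. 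Define
\[
w_\delta:=\sum_{i=1}^N \chi_i\,\widetilde u_{\delta,i}\quad \text{on } M.
\]
Using $\sum_i D\chi_i=0$, the differential computes as
\[
Dw_\delta=\sum_i \chi_i\, D\widetilde u_{\delta,i}+\sum_i D\chi_i\,(\widetilde u_{\delta,i}-u),
\]
and the second sum tends to zero uniformly as $\delta\to 0$ by the uniform convergence $\widetilde u_{\delta,i}\to u$. Hence $Dw_\delta(x)$ is, up to an arbitrarily small error, the convex combination $\sum_i\chi_i(x) D\widetilde u_{\delta,i}(x)$. Convexity of $H(x,\cdot)$ then gives
\[
H\bigl(x,Dw_\delta(x)\bigr)\leq \sum_i \chi_i(x)\,H\bigl(x,D\widetilde u_{\delta,i}(x)\bigr)+o_\delta(1),
\]
while one further application of Jensen in each local mollification, combined with the uniform continuity of $H$ on the compact set $\{(x,p):|p|_x\leq K+1\}$, yields $H(x,D\widetilde u_{\delta,i}(x))\leq c+\omega(\delta)$ for a modulus $\omega$ with $\omega(\delta)\to 0$. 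Choosing $\delta$ small enough produces simultaneously $\|w_\delta-u\|_\infty\leq\vep$, $H(x,Dw_\delta)\leq c+\vep$, and $\|Dw_\delta\|_\infty\leq K+1$.

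The principal obstacle, more technical than conceptual, is handling the globalization on the manifold: the mollification has no intrinsic meaning on $M$, so one must go through charts, and one has to verify carefully that the ``error'' term $\sum_i D\chi_i(\widetilde u_{\delta,i}-u)$ is absorbed (for this the fact that $\sum_i\chi_i\equiv 1$ is crucial, as is the uniform convergence of each $\widetilde u_{\delta,i}$ to $u$ on the support of $\chi_i$), and that the convex combination identity $\sum_i \chi_i(x)=1$ lets the local bounds $H(x,D\widetilde u_{\delta,i}(x))\leq c+\omega(\delta)$ survive the averaging. Once these bookkeeping matters are dispatched, all three estimates in the statement follow simultaneously from a single choice of sufficiently small $\delta$.
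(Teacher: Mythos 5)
Your proof is correct and is essentially the standard argument: the paper itself gives no proof of this proposition, only citing Theorem 10.6 of \cite{Fathi2012}, whose proof is exactly your scheme --- reduce to the a.e.\ inequality $H(x,Du(x))\le c$ (valid for Lipschitz viscosity subsolutions), mollify in charts and use Jensen's inequality via the convexity of $H(x,\cdot)$ together with uniform continuity of $H$ on $\{|p|_x\le \textup{Lip}(u)+1\}$, then patch with a partition of unity exploiting $\sum_i D\chi_i=0$. The one place you are slightly loose is the claim $|Du_{\delta,i}|\le K$ in the chart (the Euclidean and Riemannian Lipschitz constants differ by the chart distortion), but since $D\widetilde u_{\delta,i}(x)$ is an average of differentials of $u$ at points within distance $O(\delta)$ of $x$, its Riemannian norm at $x$ is at most $\textup{Lip}(u)+1$ for $\delta$ small, so the stated gradient bound survives the bookkeeping.
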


\subsection{Aubry-Mather theory for non-smooth systems}

Throughout this subsection, $H: T^*M\to \R $ is assumed to be a \emph{continuous} Hamiltonian which is \emph{convex} in $p$, and is fiberwise \emph{superlinear}, i.e.,
\[\lim_{|p|_x\to\infty} H(x,p)/|p|_x=+\infty, \quad  \text{uniformly in~}  x\in M.\]
It is quite clear that the superlinearity implies the coercivity.
 Through the Legendre-Fenchel transformation, we can define the Lagrangian $L:TM\to\R $ associated to $H$ as follows:
 \begin{align*}
	L(x,v)=\sup_{p\in T^*_xM}\{\du{p,v}_{x}-H(x,p)\},\quad  \forall~(x,v)\in TM,
\end{align*}
where $\du{p,v}_{x}$ denotes the value of the linear form $p\in T_x^*M$ evaluated at $v\in T_xM$. 
It is a classical result in convex analysis that  $L(x, v)$ is \emph{continuous} on $TM$, and is also \emph{convex} and \emph{superlinear} in the fibers.

For every $t>0$ and $x, y\in M$, let $\Gamma^t_{x,y}$ denote the set of Lipschitz continuous curves $\xi: [0,t]\to M$ with $\xi(0)=x$ and $\xi(t)=y$.  We define the {\em action function}
\begin{align*}
	h_t(x,y)=\inf_{\xi\in \Gamma^t_{x,y}}\int_{0}^{t}[L(\xi(s),\dot{\xi}(s))+c(H)]\ ds.
\end{align*}
The infimum can be attained by a Lipschitz continuous minimizing curve.
Then, we define a real-valued function $h$ on $M\times M$ by  
\begin{align}\label{def_peierls}
	h(x,y):=\liminf_{t\to+\infty} h_t(x,y).
\end{align}
In the literature, $h(x,y)$ is called the {\em Peierls barrier}. 
This leads us to define the so-called {\em projected Aubry set} $\mathcal{A}$  by
\begin{align*}
	\mathcal{A}:=\{x\in M: h(x, x)=0\}.
\end{align*}
Since $M$ is compact, the projected Aubry set $\mathcal{A}$ is non-empty and compact.

\begin{Pro}\label{proper_criti_sol}
The following properties hold:
\begin{enumerate}[\rm(1)]
	\item $h$ is finite valued and Lipschitz continuous.
     \item For any $y\in M$, the function $x\mapsto h(y,x)$  is a viscosity solution of  
$H(x,Du(x))=c(H)$ and the function $x\mapsto -h(x,y)$  is a viscosity subsolution of  $H(x,Du(x))=c(H)$.
	\item If $v$ is a  viscosity subsolution, then $v(y)-v(x)\leqslant h(x,y)$.
	\item If $f$ and $g$ are a viscosity subsolution and a viscosity supersolution 
     of $H(x, Du(x))=c(H)$, and $f\leq g$ on the set $\mathcal{A}$, then $f\leq g$.
     \item If $v$ is a viscosity solution, then $v(x)=\min_{y\in M}[v(y)+h_t(y,x)]$  for every $t>0$.
\end{enumerate}
\end{Pro}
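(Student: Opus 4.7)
My plan treats the five properties in the order (1), (3), (5), (2), (4), following the non-smooth weak KAM framework of \cite{Fathi_Siconolfi2005, Davini_Zavidovique2013, DFIZ2016}. The central tool throughout is the smooth approximation of subsolutions (Proposition \ref{smooth_sub_approx}) combined with the Fenchel inequality $\du{p,v}_x \leq L(x,v) + H(x,p)$, which is the only bridge between the Hamiltonian and Lagrangian sides in the absence of a Hamiltonian flow.

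For (1), finiteness of $h_t(x,y)$ from above follows by connecting $x$ to $y$ by a constant-speed geodesic and using continuity of $L$ on bounded subsets of $TM$; finiteness of $h$ from below uses any critical subsolution $v$ (available by Proposition \ref{crit_val_solu}) together with the a priori inequality $h_t(x,y) \geq v(y) - v(x)$, which I establish in (3). Lipschitz continuity comes from a splicing argument: given pairs $(x,y)$ and $(x',y')$ sufficiently close, modify a near-minimizer for $h_t(x,y)$ by prepending a short curve from $x'$ to $x$ and appending one from $y$ to $y'$, at a cost linear in $d(x,x') + d(y,y')$ and uniform in $t$. For (3), $v$ is Lipschitz by Proposition \ref{sublip}, and Proposition \ref{smooth_sub_approx} gives smooth $w$ with $\|v - w\|_\infty \leq \vep$ and $H(x, Dw) \leq c(H) + \vep$. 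Along any Lipschitz curve $\xi:[0,t]\to M$ from $x$ to $y$,
\[ w(y) - w(x) = \int_0^t \du{Dw(\xi(s)), \dot\xi(s)}_{\xi(s)}\, ds \leq \int_0^t \bigl[L(\xi, \dot\xi) + c(H) + \vep\bigr]\, ds, \]
yielding $v(y) - v(x) \leq h_t(x,y) + \vep(t+2)$; letting $\vep \to 0$ and then $\liminf_{t \to \infty}$ finishes. Property (5) then has one direction free from (3) applied to $v$ itself, and the other direction uses the supersolution property of $v$ together with the existence of a minimizer for $h_t(\cdot, x)$, obtained by the direct method: Ascoli-Arzelà on near-minimizers (with uniformly bounded action, hence uniformly bounded mean velocity by superlinearity), plus Tonelli's lower semicontinuity for convex integrands.

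Property (2) is a standard consequence of the dynamic programming inequality $h_{t+s}(y, z) \leq h_t(y, x) + h_s(x, z)$: passing to the liminf makes $x \mapsto h(y,x)$ both a sub- and a supersolution, while $x \mapsto -h(x,y)$ is a subsolution by duality with (3) (it is dominated by every subsolution normalized to vanish at $y$, and the supremum of subsolutions is a subsolution). The main obstacle will be (4), the comparison principle on the projected Aubry set. Without a Hamiltonian flow one cannot propagate data along trajectories, so I would follow the rigidity argument of Fathi-Siconolfi \cite{Fathi_Siconolfi2005}: if the maximum of $f - g$ were positive and attained at some $x_0$, then the assumption $x_0 \notin \mathcal{A}$, i.e.\ $h(x_0, x_0) > 0$, would allow the construction of a strict subsolution pointwise strictly greater than $f$ in a neighborhood of $x_0$ but still dominated by $g$, contradicting the maximality of $f - g$ at $x_0$. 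The technical heart is this strict-subsolution construction, which is purely a PDE statement, and which I would import from the cited non-smooth literature rather than reproduce.
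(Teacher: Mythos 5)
Your overall route for items (1)--(4) matches what the paper itself does, namely to defer to the non-smooth weak KAM literature (\cite{DFIZ2016}, \cite{Fathi_Siconolfi2005}); your reconstructions of (1) and (3) are correct and standard. There are, however, two places where the sketch as written would not close. The more serious one is the reverse inequality in (5), $v(x)\geq\min_{y}[v(y)+h_t(y,x)]$, which happens to be the only item for which the paper supplies an actual argument. Knowing that each $h_t(y,x)$ is attained by a minimizing curve does not produce a point $y$ at which $v(x)=v(y)+h_t(y,x)$: for that you need a curve ending at $x$ along which $v$ is \emph{calibrated}, and in the purely continuous setting you cannot manufacture one by running backward characteristics from the supersolution property. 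The paper's argument is different and is the one you should adopt: set $U(x,t)=\min_{y}[v(y)+h_t(y,x)]$, observe that $U$ is a viscosity solution of the Cauchy problem $\partial_t u+H(x,\partial_x u)=c(H)$ with $u(\cdot,0)=v$, that the stationary function $v$ is also a solution of this Cauchy problem, and conclude $v=U$ from uniqueness (equivalently, from comparison of the supersolution $v$ with the solution $U$). Your ``supersolution plus minimizer'' step is exactly the content of this comparison and is not obtained for free from the direct method.

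The second, smaller issue is in (2): the claim that $x\mapsto -h(x,y)$ is a subsolution ``because it is dominated by every subsolution normalized to vanish at $y$, and the supremum of subsolutions is a subsolution'' is not a valid inference --- being a lower bound for a family of subsolutions does not make a function a subsolution, and you have not exhibited $-h(\cdot,y)$ as a supremum of anything. The standard repair is the triangle inequality $h(x,y)\leq h_t(x,x')+h(x',y)$, which gives $[-h(x',y)]-[-h(x,y)]\leq h_t(x,x')$ for all $t>0$; for convex coercive $H$ this domination by $h_t$ characterizes subsolutions, and the same characterization, combined with the fact that $h_t(y,\cdot)$ evolves under the Lax--Oleinik semigroup, is what makes $h(y,\cdot)$ a full solution. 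For (4) you correctly identify that the strict-subsolution rigidity argument must be imported from the cited literature; the paper does the same by citation, so there is no objection on that point.
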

These results are well known for $C^2$ Tonelli systems \cite{Fathi_book}.  For the non-smooth case, see \cite{DFIZ2016} for items (1)-(4). As for item (5), see for instance \cite[Proposition 6.5]{CCIZ2019} for the case where $M$ is a compact manifold. Indeed,
we can define  a function $U(x, t)= \min_{y\in M}[v(y)+h_t(y,x)]$. It is a viscosity solution of the Cauchy problem: $\partial_t u(x,t)+H(x, \partial_x u(x,t))=c(H)$ and  $u(x,0)=v(x)$. Note that $v(x)$ is also a viscosity solution of this Cauchy problem. Then the uniqueness of solutions leads to $v(x)=U(x,t)$.

The following semi-continuity property is well known, see for instance \cite{Buttazzo_Giaquinta_Hildebrandt_book}. 
\begin{Lem}\label{lower_semi_cont}
 Let $\xi_k:[a,b]\to M$	be a sequence of equi-Lipschitz curves such that
$$\sup_k\int_{a}^{b} L(\xi_k(s),\dot{\xi}_k(s))\,ds\leqslant C<\infty,$$
then there is a subsequence $\{\xi_{k_i}\}_i$  converging uniformly to a Lipschitz curve $\xi:[a,b]\to M$, and 
\begin{equation}\label{lower_con}
	\int_{a}^{b} L(\xi(s),\dot{\xi}(s))\,ds\leqslant\liminf_{i\to\infty}\int_{a}^{b} L(\xi_{k_i}(s),\dot{\xi}_{k_i}(s))\,ds.
\end{equation}
\end{Lem}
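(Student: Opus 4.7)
The plan is to handle the two claims separately: first extract a uniformly convergent subsequence via Arzel\`a--Ascoli, and then establish the semicontinuity inequality through a Legendre--Fenchel duality argument that exploits the convexity of $L$ in the fibre variable.

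For the compactness step, since $\{\xi_k\}$ is equi-Lipschitz and $M$ is compact, Arzel\`a--Ascoli produces a subsequence $\xi_{k_i}$ converging uniformly to a Lipschitz curve $\xi:[a,b]\to M$ whose Lipschitz constant is bounded by the common constant of the $\xi_k$. Working in a finite atlas, or through an isometric embedding $M\hookrightarrow \R^N$, the derivatives $\dot\xi_{k_i}$ are uniformly bounded in $L^\infty$, so up to a further subsequence we may assume $\dot\xi_{k_i}\rightharpoonup^{*}\dot\xi$ in $L^\infty([a,b])$; the uniform convergence of $\xi_{k_i}$ forces this weak-$*$ limit to coincide with the a.e.\ derivative of $\xi$.

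For the semicontinuity, I would start from the Legendre--Fenchel identity
\[
L(x,v)=\sup_{p\in T_x^*M}\bigl\{\du{p,v}_x-H(x,p)\bigr\},
\]
which implies that for every continuous $1$-form $\omega$ on $M$,
\[
L(\xi_{k_i}(s),\dot\xi_{k_i}(s))\;\geq\;\du{\omega(\xi_{k_i}(s)),\dot\xi_{k_i}(s)}_{\xi_{k_i}(s)}\,-\,H(\xi_{k_i}(s),\omega(\xi_{k_i}(s))).
\]
Integrating over $[a,b]$ and passing to $\liminf_i$: the $H$-term converges to $\int_a^b H(\xi,\omega(\xi))\,ds$ by dominated convergence, while the pairing term converges to $\int_a^b \du{\omega(\xi),\dot\xi}\,ds$ because $\omega(\xi_{k_i})\to \omega(\xi)$ uniformly and $\dot\xi_{k_i}\rightharpoonup^{*}\dot\xi$. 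Hence for every such $\omega$,
\[
\liminf_i\int_a^b L(\xi_{k_i},\dot\xi_{k_i})\,ds\;\geq\;\int_a^b\bigl\{\du{\omega(\xi),\dot\xi}-H(\xi,\omega(\xi))\bigr\}\,ds.
\]

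To conclude, I would choose a countable family $\{\omega_n\}$ of continuous $1$-forms on $M$, built from a finite atlas, a smooth partition of unity, and a dense set of locally constant momenta in each chart, for which $\sup_n\bigl\{\du{\omega_n(x),v}_x-H(x,\omega_n(x))\bigr\}=L(x,v)$ pointwise on $TM$. Monotone convergence then transfers the supremum inside the time integral and yields the desired inequality. The main technical obstacle is precisely this last interchange of the fiberwise supremum defining $L$ with the time integral: on $\R^n$ this is Ioffe's classical semicontinuity theorem, and on a manifold one must take some care in constructing a countable family of continuous sections of $T^*M$ that is fiberwise dense enough to realize the Legendre--Fenchel supremum.
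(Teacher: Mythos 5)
The paper does not actually prove this lemma: it records it as a well-known semicontinuity property and refers the reader to Buttazzo--Giaquinta--Hildebrandt, so there is no in-paper argument to compare against. Your proposal reconstructs precisely the classical Tonelli/Ioffe proof that such references contain: Arzel\`a--Ascoli plus weak-$*$ compactness of the derivatives for the compactness part, and the representation of $L$ as a supremum of functions affine in $v$ (via continuous $1$-forms and Legendre--Fenchel duality) for the semicontinuity part. The structure is sound, and the identification of the weak-$*$ limit of $\dot\xi_{k_i}$ with $\dot\xi$, the convergence of the strong--weak pairing $\int\langle\omega(\xi_{k_i}),\dot\xi_{k_i}\rangle\,ds$, and the construction of a countable fiberwise-dense family of continuous sections via a partition of unity and rational momenta are all correct.

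The one place where your write-up understates the work is the final interchange: ``monotone convergence then transfers the supremum inside the time integral'' is not quite how it goes, since for each fixed $\omega_n$ you only obtain $\sup_n\int(\cdots)\leq\liminf_i\int L(\xi_{k_i},\dot\xi_{k_i})\,ds$, and $\sup_n\int\neq\int\sup_n$ in general. The standard remedy is to fix a finite subfamily $F$, partition $[a,b]$ into measurable sets $E_n$ on which $\omega_n$ realizes the maximum over $F$, run the strong--weak convergence argument on each $E_n$ separately (which works since $\chi_{E_n}\omega_n(\xi_{k_i})\to\chi_{E_n}\omega_n(\xi)$ in $L^1$), sum to get $\liminf_i\int L(\xi_{k_i},\dot\xi_{k_i})\,ds\geq\int_a^b\max_{n\in F}(\cdots)\,ds$, and only then let $F$ increase and invoke monotone convergence. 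You do flag exactly this step as the technical obstacle and correctly attribute it to Ioffe-type semicontinuity theorems, so I regard the proposal as a correct sketch rather than as containing a gap; just be aware that the localization-by-partition argument, not monotone convergence alone, is what makes the interchange legitimate.
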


Next, we will  introduce the notion of  Mather measure. For the purpose of this paper, it is convenient to adopt the equivalent definition originating from Ma\~n\'e \cite{Mane1996}.
Recall that a Borel probability measure $\tilde{\mu}$ on $TM$ is called {\em closed} if it satisfies $\int_{TM} |v|_x\ d\tilde{\mu}<+\infty$ and
 $$\int_{TM} \du{D\varphi,v}_x\ d\tilde{\mu}(x,v)=0,\quad\text{ for all~} \varphi\in C^1(M).$$ 
 Let $\mathcal{P}$ be the set of closed probability measures on $TM$.  
This set is non-empty. In addition, the critical value $c(H)$ can be obtained by considering a minimizing problem on the set $\mathcal{P}$. More precisely, we have :
\begin{Pro}The following relation holds: 
	\begin{align}\label{mini_pro}
	-c(H)=\min_{\tilde{\mu}\in\mathcal{P}}\int_{TM}L(x,v)\ d\tilde{\mu}(x,v).
\end{align}
\end{Pro}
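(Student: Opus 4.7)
The plan is to establish the identity by proving two matching inequalities, exploiting the Fenchel duality between $H$ and $L$ in one direction and the Aubry set loop construction in the other.

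For the inequality $-c(H) \leq \int_{TM} L\, d\tilde\mu$ for every $\tilde\mu\in\mathcal{P}$, I would start from any viscosity subsolution $v$ of $H(x,Du)=c(H)$, which exists by definition of $c(H)$ and is Lipschitz by Proposition \ref{sublip}. Given $\varepsilon>0$, Proposition \ref{smooth_sub_approx} furnishes a $C^\infty$ function $w$ with $H(x,Dw(x))\leq c(H)+\varepsilon$ everywhere. The Legendre-Fenchel inequality then yields
\[
\langle Dw(x),v\rangle_x \leq L(x,v) + H(x,Dw(x)) \leq L(x,v) + c(H) + \varepsilon, \quad\forall (x,v)\in TM.
\]
Integrating against the closed probability measure $\tilde\mu$ and using the defining property $\int\langle Dw,v\rangle_x\, d\tilde\mu = 0$ together with $\tilde\mu(TM)=1$, we obtain $0 \leq \int L\, d\tilde\mu + c(H)+\varepsilon$, and sending $\varepsilon\to 0$ gives the desired bound.

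For the reverse inequality (which also shows the infimum is attained, justifying the $\min$), I would produce an explicit minimizer using the Aubry set. Pick any $x_0\in\mathcal{A}$ (non-empty by compactness of $M$). Since $h(x_0,x_0)=0$, there exist $t_n\to+\infty$ and Lipschitz loops $\xi_n\in \Gamma^{t_n}_{x_0,x_0}$ with $\int_0^{t_n}[L(\xi_n,\dot\xi_n)+c(H)]\,ds \to 0$. Define the probability measures $\tilde\mu_n$ on $TM$ by
\[
\int_{TM} f\, d\tilde\mu_n := \frac{1}{t_n}\int_0^{t_n} f(\xi_n(s),\dot\xi_n(s))\,ds, \qquad f\in C_c(TM).
\]
Closedness is immediate from the fundamental theorem of calculus: $\int\langle D\varphi,v\rangle_x\, d\tilde\mu_n = (\varphi(\xi_n(t_n))-\varphi(\xi_n(0)))/t_n = 0$ for every $\varphi\in C^1(M)$. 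The superlinearity of $L$ provides, for any $\alpha>0$, a constant $C_\alpha$ with $L(x,v)\geq \alpha|v|_x - C_\alpha$, which forces a uniform bound on $\int |v|_x\, d\tilde\mu_n$ and hence the Prokhorov tightness of $\{\tilde\mu_n\}$ on $TM$. Extract a weak-$*$ limit $\tilde\mu$, which is a Borel probability measure, still closed by passing to the limit in the linear test $\varphi\mapsto\int\langle D\varphi,v\rangle_x\, d\tilde\mu_n = 0$ (the uniform integrability of $|v|_x$ justifies this for the unbounded test function $\langle D\varphi,v\rangle_x$).

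Finally, since $L$ is continuous and bounded below (again by superlinearity), weak-$*$ convergence gives the standard lower semicontinuity
\[
\int_{TM} L\, d\tilde\mu \;\leq\; \liminf_{n\to\infty}\int_{TM} L\, d\tilde\mu_n \;=\; \liminf_{n\to\infty}\frac{1}{t_n}\int_0^{t_n}L(\xi_n,\dot\xi_n)\,ds \;=\; -c(H),
\]
combining with the first step to yield equality and to show the minimum is achieved by $\tilde\mu$. The main obstacle I anticipate is handling the non-compactness of the fibers of $TM$: both the tightness of $\{\tilde\mu_n\}$ and the passage to the limit in the closedness identity (which uses the unbounded function $v\mapsto\langle D\varphi(x),v\rangle_x$) have to be justified carefully, and both hinge on the superlinear growth of $L$ giving uniform control of $\int|v|_x\, d\tilde\mu_n$ along the minimizing loops.
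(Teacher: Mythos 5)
Your proposal is correct, and in fact supplies a proof where the paper gives none: the paper states this proposition as a known fact (Ma\~n\'e's characterization of the critical value) and refers to the literature, so there is no in-paper argument to compare against. Your two halves are the standard ones and both go through. The first inequality correctly routes through Proposition \ref{smooth_sub_approx} — this is necessary, since the closedness condition only tests against $C^1$ functions, so a merely Lipschitz critical subsolution cannot be plugged in directly — and the existence of a Lipschitz subsolution at level exactly $c(H)$ is guaranteed by Propositions \ref{crit_val_solu} and \ref{sublip}. For the second half, the one step you flag as delicate is indeed the only place needing more than you wrote: a uniform bound on $\int |v|_x\, d\tilde\mu_n$ alone does not give uniform integrability of $|v|_x$, so to pass to the limit in $\int \langle D\varphi, v\rangle_x\, d\tilde\mu_n = 0$ you should use the superlinearity of $L$ quantitatively. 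Namely, with $m=\inf L>-\infty$ and $L\geq \alpha|v|_x - C_\alpha$, one gets $\int_{\{|v|_x>R\}} |v|_x\, d\tilde\mu_n \leq \alpha^{-1}\bigl(\int_{TM}(L-m)\,d\tilde\mu_n + (C_\alpha+|m|)\,\tilde\mu_n(\{|v|_x>R\})\bigr)$, and since $\int L\, d\tilde\mu_n \to -c(H)$ and $\tilde\mu_n(\{|v|_x>R\})\leq C/R$, letting $R\to\infty$ and then $\alpha\to\infty$ yields the required uniform integrability. (Alternatively, one can show the almost-minimizing loops are equi-Lipschitz by the same velocity-bound argument the paper uses in Lemma \ref{existence_minicurve}, which confines all $\tilde\mu_n$ to a fixed compact subset of $TM$ and makes the limit passage trivial.) With that point filled in, the lower semicontinuity step and the conclusion are exactly right, and your construction also shows the minimum is attained, justifying the $\min$ in the statement.
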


A {\em Mather measure} for the Lagrangian $L$ is a  measure $\tilde{\mu}\in\mathcal{P}$  achieving the minimum of \eqref{mini_pro}:
\begin{align}\label{def_MaM}
	\int_{TM}L(x,v)\ d\tilde{\mu}=-c(H).
\end{align}
 We denote by 
 $\widetilde{\mathfrak{M}}(L)$ the set of all Mather measures. This set is non-empty and convex. Note that our Proposition \ref{weak_convergence} provides an approximation scheme to obtain Mather measures.

We end up this section by the following compactness property:
\begin{Pro}\label{MM_compactness}
	Each Mather measure $\tilde{\mu}$ has compact support, i.e., the set $\text{supp}\tilde{\mu}$ is compact in $TM$.
\end{Pro}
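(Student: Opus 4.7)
The plan is to combine the Fenchel-Young inequality with the superlinearity of $L$ to force the support of $\tilde{\mu}$ into a fiberwise bounded region of $TM$, which is compact since $M$ is compact. The heart of the argument is that any smooth $\vep$-subsolution of the critical equation gives a pointwise inequality whose integral against $\tilde{\mu}$ equals $\vep$; superlinearity then converts smallness of this integral into smallness of $\tilde{\mu}$-mass at infinity in the fibers.

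First I would fix a Lipschitz viscosity subsolution of $H(x,Du(x))=c(H)$, which exists by Proposition~\ref{crit_val_solu} and has Lipschitz constant at most $\rho_{c(H)} := \sup\{|p|_x : H(x,p)\leq c(H)\}$ by Proposition~\ref{sublip} (finite by coercivity). For each $\vep>0$, Proposition~\ref{smooth_sub_approx} then yields $w_\vep\in C^\infty(M)$ with $H(x, Dw_\vep(x))\leq c(H)+\vep$ everywhere and, crucially, $\|Dw_\vep\|_\infty \leq \rho_{c(H)}+1$ \emph{independently of $\vep$}. By Fenchel-Young applied to $p = Dw_\vep(x)$,
\[L(x,v) + c(H) + \vep - \du{Dw_\vep(x), v}_x \geq 0, \qquad \forall\,(x,v)\in TM.\]
Integrating against $\tilde{\mu}$ and using $\int_{TM} L\,d\tilde{\mu} = -c(H)$ (Mather property) together with $\int_{TM}\du{Dw_\vep,v}_x\,d\tilde{\mu} = 0$ (closedness of $\tilde{\mu}$, applicable since $w_\vep\in C^1$), I obtain
\[\int_{TM}\left[L(x,v) + c(H) + \vep - \du{Dw_\vep(x),v}_x\right] d\tilde{\mu}(x,v) = \vep.\]

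Now invoke superlinearity of $L$: there exists $C>0$, independent of $\vep$, such that $L(x,v) \geq (\rho_{c(H)}+2)\,|v|_x - C$ for all $(x,v)\in TM$. Combined with $\|Dw_\vep\|_\infty \leq \rho_{c(H)}+1$, the non-negative integrand is pointwise $\geq |v|_x - C + c(H) + \vep$. Setting $N := C - c(H) + 1$, which is independent of $\vep$, the integrand is at least $1$ on $\{(x,v): |v|_x \geq N\}$, so Chebyshev-Markov gives
\[\tilde{\mu}\bigl(\{(x,v)\in TM : |v|_x \geq N\}\bigr) \leq \vep.\]
Letting $\vep\to 0$, this probability is forced to be $0$, whence $\text{supp}\,\tilde{\mu} \subset \{(x,v)\in TM : |v|_x \leq N\}$, which is a compact subset of $TM$ by the compactness of $M$.

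The one delicate point — and the step where a careless version of the argument would fail — is keeping the threshold $N$ \emph{independent of $\vep$} as $\vep \to 0$. This is exactly where the uniform Lipschitz bound $\|Dw_\vep\|_\infty \leq \rho_{c(H)}+1$ in Proposition~\ref{smooth_sub_approx} matters: it lets me choose the superlinearity slope $K = \rho_{c(H)}+2$ once and for all, so that the constant $C_K$ and hence $N$ depend only on the critical Hamiltonian, not on the approximant. Without this uniformity the estimate degrades to mere tightness of $\tilde{\mu}$ rather than compactness of its support.
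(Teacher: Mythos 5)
Your argument is correct. Note that the paper does not actually prove Proposition \ref{MM_compactness}: it defers to Theorem B.22 in Appendix B of the arXiv version of \cite{DFIZ2016}, and your proof is essentially that standard argument — test the nonnegative Fenchel--Young defect of a smooth $\vep$-subsolution from Proposition \ref{smooth_sub_approx} against the minimizing closed measure, and let uniform superlinearity together with the $\vep$-independent gradient bound $\|Dw_\vep\|_\infty\leq\textup{Lip}(w)+1$ produce a fiber radius $N$ independent of $\vep$. You have also correctly identified the only delicate point, namely that without the uniform Lipschitz bound on the approximants the threshold $N$ would degenerate as $\vep\to 0$.
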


This compactness result is well known  for $C^2$ Tonelli systems  \cite{Mather1991}. For the case where the Hamiltonian is purely continuous and is convex and superlinear in the fibers, a complete proof can be found in the  Appendix B (Theorem B.22) of the arXiv version of the work \cite{DFIZ2016}.

\section{Some \emph{a priori} estimates for solutions}\label{section_exist_and_uniq}
In this section,  we will show the existence and uniqueness of  solutions to  equation \eqref{eq_H_lambda},  and give key estimates for the whole family of solutions. This is a standard result  by using  Perron's method (see for instance \cite{Ishii1987}) and the comparison principle.  It is worth noting that assumptions {\bf(H1)} and {\bf(H4)} are not needed here.

\begin{Pro}\label{uni_bound}
Let $H$ satisfy  {\bf(H2)} and {\bf(H3)}. There exists a  number $\alo>0$ such that for each $\lambda\in(0,\alo)$, equation \eqref{eq_H_lambda} admits a unique continuous viscosity solution $u^\lambda$.  In addition,  the  family $\{u^\lambda\}_ {\lambda\in(0,\alo)}$ is  equi-bounded and equi-Lipschitz, namely there exist constants $\mathbf{C_0}>0$ and $\mathbf{M_0}>0$ which are independent of $\lambda$ such that
\begin{equation}\label{apriori_est}
	\|u^\lambda\|_\infty\leqslant \mathbf{C_0},\quad \textup{Lip}(u^\lambda)\leqslant \mathbf{M_0}.
\end{equation}
\end{Pro}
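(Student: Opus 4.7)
The plan is to follow a standard Perron-plus-comparison scheme, using {\bf(H3)} to transport a solution of the critical equation \eqref{eq_G} into an ordered pair of sub/supersolutions of \eqref{eq_H_lambda}, and using {\bf(H2)} to produce a $\lambda$-independent Lipschitz bound. First, by Proposition \ref{crit_val_solu} pick any viscosity solution $w$ of \eqref{eq_G}; since $\GG$ is coercive by \eqref{coercive_forall_u} at $u_0=0$, Proposition \ref{sublip} gives that $w$ is Lipschitz, hence $K:=\|w\|_\infty<+\infty$. I claim that for every $\lambda>0$ the function $w_-:=w-K$ is a viscosity subsolution and $w_+:=w+K$ a viscosity supersolution of \eqref{eq_H_lambda}. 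Indeed, if $\varphi\in C^1$ touches $w_-$ from above at $x_0$, then $\varphi+K$ touches $w$ from above at $x_0$, so $\GG(x_0,D\varphi(x_0))\leq c(\GG)$; since $\lambda w_-(x_0)\leq 0$, the strict monotonicity in {\bf(H3)} yields
\[
H(x_0,D\varphi(x_0),\lambda w_-(x_0))\leq H(x_0,D\varphi(x_0),0)=\GG(x_0,D\varphi(x_0))\leq c(\GG),
\]
and the argument for $w_+$ is symmetric, using $\lambda w_+(x_0)\geq 0$.

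The second step invokes Perron's method with the ordered pair $(w_-,w_+)$: this produces a continuous viscosity solution $u^\lambda$ of \eqref{eq_H_lambda} sandwiched as $w-K\leq u^\lambda\leq w+K$, so in particular $\|u^\lambda\|_\infty\leq 2K=:\mathbf{C_0}$, uniformly in $\lambda$. I next set $\alo:=\rro/\mathbf{C_0}$; then for $\lambda\in(0,\alo)$ the bound $|u^\lambda|\leq\mathbf{C_0}$ implies $\lambda u^\lambda(x)>-\rro$ on $M$. Consequently, at every test point for the subsolution property, {\bf(H3)} yields
\[
H(x_0,D\varphi(x_0),-\rro)\leq H(x_0,D\varphi(x_0),\lambda u^\lambda(x_0))\leq c(\GG),
\]
so $u^\lambda$ is a viscosity subsolution of the \emph{autonomous} equation $H(x,Du,-\rro)=c(\GG)$, whose Hamiltonian is coercive by {\bf(H2)}. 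Proposition \ref{sublip} then supplies the equi-Lipschitz bound $\textup{Lip}(u^\lambda)\leq\mathbf{M_0}:=\sup\{|p|_x: H(x,p,-\rro)\leq c(\GG)\}$, which depends only on $\GG$ and $\rro$.

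For uniqueness, I apply Lemma \ref{The_comp_prin} to $F(x,p,u):=H(x,p,\lambda u)-c(\GG)$, which is continuous and strictly increasing in $u$ by {\bf(H3)}; the Lipschitz $u^\lambda$ just constructed furnishes the Lipschitz viscosity solution required by the hypothesis of that lemma, so any continuous viscosity subsolution of \eqref{eq_H_lambda} lies below any continuous viscosity supersolution, forcing uniqueness. The only mildly delicate point is the calibration of $\alo$: one must balance the a priori $L^\infty$-bound $\mathbf{C_0}$, extracted purely from a critical solution $w$, against the threshold $-\rro$ at which {\bf(H2)} guarantees coercivity; it is precisely this balance that forces the smallness $\lambda<\alo$, after which everything else is routine.
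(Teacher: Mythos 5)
Your proposal follows essentially the same route as the paper: the barriers $w\pm\|w\|_\infty$ built from a critical solution, the constants $\mathbf{C_0}=2\|w\|_\infty$ and $\alo=\rro/\mathbf{C_0}$, the reduction to the coercive equation $H(x,Du,-\rro)=c(\GG)$ for the equi-Lipschitz bound, and Lemma \ref{The_comp_prin} for uniqueness all coincide with the paper's argument. The one place where the paper is more careful is that it proves the uniform Lipschitz bound for \emph{every} admissible subsolution in the Perron family before concluding that the supremum is a continuous viscosity solution, whereas you invoke Perron's method as a black box yielding a continuous solution and only afterwards derive the Lipschitz estimate --- a slightly delicate ordering, since the continuity of the Perron envelope would otherwise lean on a comparison principle that itself presupposes a Lipschitz solution; your estimate applies verbatim to each member of the family, so the repair is immediate.
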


\begin{proof}
We will construct solutions by Perron's method.  First, we recall that  $\GG(x,p)$ is coercive in the fibers as a consequence of  assumptions {\bf(H2)-(H3)}, see also \eqref{coercive_forall_u}. According to Proposition \ref{crit_val_solu} and Proposition \ref{sublip}, equation \eqref{eq_G}  admits  a Lipschitz viscosity solution, denoted by $w_0$. Setting $w_0^-:=w_0-\|w_0\|_\infty$  and $w_0^+:=w_0+\|w_0\|_\infty$,  the monotonicity assumption {\bf(H3)} implies that $w_0^-$ and  $w_0^+$ are, respectively, a Lipschitz subsolution and a Lipschitz  supersolution of \eqref{eq_H_lambda}. Now we fix two constants that are independent of $\lam$:
\begin{equation}\label{fsgw}
	\mathbf{C_0}:=2\|w_0\|_\infty,\quad \alo=\rro/\mathbf{C_0},
\end{equation}
where $\rro$ is given in assumption {\bf(H2)}.
For each $\lambda\in (0,\alo)$, we set  
\begin{equation}\label{perron_construction}
u^\lam(x):=\sup\{v(x):w_0^-\leqslant v\leqslant w_0^+, v \text{~is a continuous subsolution of~} \eqref{eq_H_lambda} \}.
\end{equation}
Of course, $u^\lam$ is well-defined as $w_0^-$ itself is an admissible subsolution in the above formula.
Observe that if $v$ is a subsolution of \eqref{eq_H_lambda} satisfying  $w_0^- \leqslant v \leqslant w_0^+$, then 
\begin{equation}\label{norm_est1}
	|v(x)|\leqslant \mathbf{C_0},\quad \text{for all~} x \in M.
\end{equation}
This gives $\|\lam v\|_\infty< \rro$ and $H(x, p,-\rro)\leq H(x, p,\lam v(x))$, and 
 $v$ is then a viscosity subsolution of the following equation 
$$ H(x, Dv(x),-\rro) = c(\GG).$$
This indicates that $v$ is Lipschitz continuous thanks to Proposition \ref{sublip}.
Using the coercivity assumption {\bf(H2)}, we can find a constant 
$\mathbf{M_0} >0$, such that 
 $H(x, p,-\rro)> c(\GG)$ for all $|p|_x>\mathbf{M_0}$.
This in turn gives  a uniform Lipschitz bound $\mathbf{M_0}$ for the function $v(x)$.  
As a result, formula \eqref{perron_construction} can be rewritten as 
\begin{equation}
u^\lambda(x)=\sup\{v(x):w_0^-\leqslant v\leqslant w_0^+, v \text{~is a Lipschitz subsolution of~} \eqref{eq_H_lambda}, ~\text{Lip}(v)\leq \mathbf{M_0} \}.
\end{equation}
By the Perron method, $u^\lam$ is exactly a viscosity solution of \eqref{eq_H_lambda}, and also satisfies the following Lipschitz property,
\begin{equation}\label{norm_est2}
|u^\lam(x)-u^\lam(y)|\leqslant \mathbf{M_0}|x-y|,\quad \text{for every~} x, y\in M. 
\end{equation}

As for the uniqueness of solutions, it follows directly from the comparison principle (see Lemma \ref{The_comp_prin}).  Finally, the uniform estimate \eqref{apriori_est}	is a consequence of inequalities \eqref{norm_est1} and \eqref{norm_est2}.
\end{proof}

As a corollary, we can consider a special model which satisfies that $\GG(x,0)\leq c(\GG)$ for all $x\in M$. In this case, we can easily obtain a convergence result without the convexity assumption {\bf(H1)}. 
\begin{Pro}\label{uni_const}
Let $H$  satisfy  assumptions {\bf(H2)}-{\bf(H3)}. Suppose that constant functions are subsolutions of equation \eqref{eq_G}, or equivalently $\GG(x, 0)\leq c(\GG)$ for all $x\in M$. Then the  solution $u^\lam\geq 0$ with $\lam\in(0,\alpha_0)$,  and converges uniformly  to a critical solution $u^0$ as $\lambda\to 0$.
\end{Pro}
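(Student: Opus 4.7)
The plan is to exploit the nonnegativity constraint $u^\lam\geq 0$ coming from the hypothesis to turn $\{u^\lam\}_{\lam\in(0,\alo)}$ into a family that is monotone in $\lam$, and then conclude convergence by the standard stability of viscosity solutions. Neither the convexity assumption \textbf{(H1)} nor the differentiability assumption \textbf{(H4)} is needed.

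\textbf{Step 1 (nonnegativity).} The hypothesis $\GG(x,0)\leq c(\GG)$ says precisely that the constant $v\equiv 0$ satisfies $H(x,0,\lam\cdot 0)=\GG(x,0)\leq c(\GG)$ classically, and hence is a viscosity subsolution of \eqref{eq_H_lambda} for every $\lam>0$. Since Proposition \ref{uni_bound} provides a Lipschitz viscosity solution $u^\lam$ for every $\lam\in(0,\alo)$, the comparison principle (Lemma \ref{The_comp_prin}) gives $0\leq u^\lam$ on $M$.

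\textbf{Step 2 (monotonicity in $\lam$).} For $0<\lam_1<\lam_2<\alo$, the nonnegativity of $u^{\lam_2}$ together with \textbf{(H3)} yields $H(x,p,\lam_1 u^{\lam_2}(x))\leq H(x,p,\lam_2 u^{\lam_2}(x))$ at every $(x,p)$, so $u^{\lam_2}$ is a viscosity subsolution of \eqref{eq_H_lambda} with parameter $\lam_1$. Comparing it against $u^{\lam_1}$ via Lemma \ref{The_comp_prin} gives $u^{\lam_2}\leq u^{\lam_1}$. Thus $\lam\mapsto u^\lam(x)$ is nonincreasing for every $x\in M$, and the pointwise limit $u^0(x):=\lim_{\lam\to 0^+} u^\lam(x)$ exists.

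\textbf{Step 3 (convergence and limit identification).} Proposition \ref{uni_bound} supplies the uniform bounds $\|u^\lam\|_\infty\leq \mathbf{C_0}$ and $\textup{Lip}(u^\lam)\leq \mathbf{M_0}$. Combined with the monotone pointwise limit on the compact space $M$, equi-continuity upgrades the convergence to uniform (by Arzel\`a-Ascoli, or by Dini applied to the monotone family), and $u^0$ is Lipschitz. Since $\|\lam u^\lam\|_\infty\to 0$, the continuity of $H$ gives $H(x,p,\lam u^\lam(x))\to \GG(x,p)$ locally uniformly in $(x,p)\in T^*M$, and the standard stability of viscosity solutions identifies $u^0$ as a viscosity solution of \eqref{eq_G}. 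I do not anticipate a real obstacle: once the sign condition produces monotonicity in $\lam$, the rest of the argument is routine.
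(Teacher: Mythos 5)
Your proposal is correct and follows essentially the same route as the paper: nonnegativity of $u^\lam$ by comparison with the zero subsolution, monotonicity in $\lam$ via \textbf{(H3)} and the comparison principle, and then uniform convergence from the equi-bounded, equi-Lipschitz estimates of Proposition \ref{uni_bound} together with monotonicity. The only (harmless) addition is that you spell out the stability argument identifying the limit as a solution of \eqref{eq_G}, which the paper leaves implicit.
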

\begin{proof}
Let $\lam\in(0,\alpha_0)$.
	By Proposition \ref{uni_bound} we have already known that the sequence of solutions $u^\lam$ of equation  \eqref{eq_H_lambda} is equi-bounded and equi-Lipschitz. Since constants are viscosity subsolutions of equation \eqref{eq_G}, it follows that the function $w(x)\equiv 0$ is a viscosity subsolution of equation \eqref{eq_H_lambda}. So, $u^\lam\geq w=0$ as a consequence of the comparison principle (see Lemma \ref{The_comp_prin}). Then, for $\lam'<\lam$, we have
	\[H(x, Du^{\lam}(x), \lam'\,u^{\lam}(x) )\leq H(x, Du^{\lam}(x), \lam \,u^{\lam}(x) )=c(\GG).\]
in the viscosity sense, and thus $u^\lam$ is a viscosity subsolution of the equation $H(x, Du(x), \lam' u(x))$ $=c(\GG)$. By the comparison principle again we get that $u^\lam\leq u^{\lam'}$. Therefore, the sequence $\{u^\lam\}_{\lam\in (0,\alpha_0)}$ is monotone. The convergence is now evident from what we have proved.	
\end{proof}

\section{Convergence of the family of solutions}\label{section_convergence}

In this section we will prove our main results Theorem \ref{mainresult1} and Theorem \ref{mainresult2}. 
Since we are only interested in the asymptotic behavior of the family $\{u^\lam\}_\lam$ as $\lam\to 0$, we can always let $\lam\in(0,\alo)$ throughout this section, where $\alo$ is provided in Proposition \ref{uni_bound}.
As is seen later, without affecting our analysis, we can always reduce to the case where the Hamiltonian satisfies the superlinearity assumption {\bf(H2*)}. 

To explain this, we recall that  Proposition \ref{uni_bound} shows that  the family of solutions $\{u^\lam\}_{\lam\in (0,\alo)}$ is equi-Lipschitz continuous, that is, $\textup{Lip}(u^\lambda)\leqslant \mathbf{M_0}.$
Thus it will not affect our study of convergence if we modify $H$ outside the compact set $\{(x,p)\in T^*M~:~ |p|_x\leq  \mathbf{M_0}\}$. For instance, let $f(x,p)$ be a continuous Hamiltonian which is convex and superlinear in $p$, and $f\equiv 0$ on the compact set $\{(x,p)\in T^*M~:~ |p|_x\leq  \mathbf{M_0}\}$. Then we consider a modified Hamiltonian $$\widetilde{H}(x, p, u)=H(x, p, u)+f(x, p).$$
It is clear that  $\widetilde{H}$ satisfies assumptions {\bf(H1)}, {\bf(H2*)}, {\bf(H3)} and {\bf(H4)}. Since $\widetilde{H}(x, p, u)=H(x, p, u)$ on the set  $\{(x, p, u)\in T^*M\times\R~:~ |p|_x\leq  \mathbf{M_0}\}$,  the function $u^\lam$ is still a viscosity solution of the following Hamilton-Jacobi equation:
\begin{equation}\label{ModHJ}\tag{MHJ$_\lambda$}
	\widetilde{H}(x, Du(x), \lam u(x))=c(\GG).
\end{equation}
In particular, the modified critical Hamiltonian $\widetilde{\GG}(x,p)=\GG(x,p)+f(x, p)$ has the same critical value with the unmodified one, namely $c(\widetilde{\GG})=c(\GG)$. Therefore, to prove Theorem \ref{mainresult1}, it is equivalent to  
study the convergence of solutions to the H-J equations \eqref{ModHJ}.

In the remainder of this paper, by what we have shown above \textbf{we will always assume that $H(x, p, u)$ satisfies the superlinearity assumption {\bf(H2*)}} instead of the coercivity assumption {\bf(H2)}. This enables us to introduce the associated Lagrangian $L(x, v, u)$ defined in \eqref{def_Lag}. 

Recall that for every  fixed $u_0\in(-\rro,+\infty)$  the map $p\mapsto H(x, p, u_0)$ is superlinear.
So, $L(x,v,u)$ is finite-valued whenever $u\in(-\rro,+\infty)$.  But $L(x,v,u)$ may take the value $+\infty$ when $u<-\rro$. However, as we will see later, since we are only interested in the asymptotic behavior of the family of solutions, only the information on $TM\times(-\rro,+\infty)$ is needed for our purpose.

\subsection{Some lemmas}
Let us stress that, from now on, we only discuss the Lagrangian restricted on the region $TM\times(-\rro,+\infty)$, i.e. \[L(x,v,u): TM\times(-\rro,+\infty)\to\R. \]

It is a standard result that $L(x,v,u)\in C(TM\times(-\rro,+\infty))$, and is \emph{convex} and \emph{superlinear} in $v$. 
For every $(x,u)\in M\times(-\rro,+\infty)$, we denote by $D_2^-L(x,v,u)$ the  (Fr\'echet)  subdifferential of the convex function $ L(x,\cdot, u)$ at $v$, which is the set of all $p$ such that
$L(x,v',u)\geq L(x,v,u)+\du{p, v'-v}_x$  for all $v'\in T_xM.$
Similarly, we denote by
$D_2^-H(x,p,u)$ the  (Fr\'echet)  subdifferential of the convex function $ H(x,\cdot, u)$ at $p$.
The following results are classical in convex analysis: for $u> -\rro$,

\begin{enumerate}[(i)]
	\item  $D_2^-L(x,v,u)$ and $D_2^-H(x,p,u)$ are both non-empty, compact and convex sets.
	\item The following equivalence relations hold: 
\begin{equation}\label{superdiff_equiv}
	v\in D_2^-H(x,p,u)\Longleftrightarrow p\in D_2^-L(x,v,u)\Longleftrightarrow L(x,v,u)=\du{p, v}_x-H(x,p,u).
	 \end{equation}	
    \item For every compact subset $X\subset TM\times(-\rro,+\infty)$ there exists  a constant $K>0$ such that $|p|_x\leqslant K$, for all $p\in D_2^-L(x,v,u)$ and all $(x,v,u)\in X$.  This is a consequence of the superlinearity and \eqref{superdiff_equiv}. Similar result holds for $D_2^-H(x,p,u)$.
    \item  The set-valued maps $(x,p,u)\mapsto D_2^-H(x,p,u)$ and 
$(x,v,u)\mapsto D_2^-L(x,v,u)$ are both upper semi-continuous.
\end{enumerate}
We refer the reader to \cite[Appendix A]{Cannarsa_Sinestrari_book} for more details. With these properties, we can prove the following lemma which will be useful later.

\begin{Lem}\label{L1_L4}
	Let $H(x,p,u)$ satisfy {\bf(H1)}, {\bf(H2*)}, {\bf(H3)} and {\bf(H4)}. Then the associated  Lagrangian $L(x,v,u)$, restricted on $TM\times(-\rro,+\infty)$, satisfies the following properties: 
	\begin{enumerate}[(1)]
	\item  $L\in C(TM\times(-\rro,+\infty))$  is convex and superlinear in the fibers, and is strictly decreasing in $u$.  In particular, for each $u_0\in (-\rro,+\infty)$,
\begin{equation*}\label{about_L_super}
	\lim_{|v|_x\to+\infty}\frac{L(x, v, u_0)}{|v|_x}=+\infty,\quad \textup{uniformly in~} x\in M.
\end{equation*}
		\item the partial derivative of $L$ with respect to $u$ at the point $(x,v,0)$  exists and $L_u(x,v,0)<0$.
		\item    $L_u(x,v,0)=-H_u(x,\wtp,0)$, for all $\wtp\in D^-_2L(x,v,0)$. In particular, $L_u(x, v, 0)\in$ $C(TM)$.
		\item the following property holds:
		 \begin{align}\label{ap_local_uni}
	\lim_{u\to 0}\frac{L(x,v,u)-L(x,v,0)}{u}=L_u(x,v,0),\quad \text{locally uniformly in~}  (x, v)\in TM.
\end{align}  
	\end{enumerate}
\end{Lem}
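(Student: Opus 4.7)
Item (1) is a standard output of the Legendre--Fenchel duality (cf.\ \cite[Theorem~A.2.6]{Cannarsa_Sinestrari_book}): continuity, fiberwise convexity and fiberwise superlinearity of $L$ on $TM\times(-\rro,+\infty)$ are inherited from the corresponding properties of $H$, once one observes via \textbf{(H3)} that \textbf{(H2*)} propagates to every fiber $H(x,\cdot,u_0)$ with $u_0\geq-\rro$, uniformly in $x\in M$. The strict monotonicity of $L$ in $u$ is obtained by picking a maximizer $p^*\in D_2^-L(x,v,u_2)$, so that $L(x,v,u_2)=\du{p^*,v}_x-H(x,p^*,u_2)$, and invoking strict increase of $H$ in $u$: for $u_1<u_2$,
\[
L(x,v,u_1)\geq \du{p^*,v}_x-H(x,p^*,u_1)>\du{p^*,v}_x-H(x,p^*,u_2)=L(x,v,u_2).
\]

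For items (2) and (3), the central tool is the two-sided sandwich derived from \eqref{superdiff_equiv}: for any $p^*\in D_2^-L(x,v,0)$ and any $p^*_u\in D_2^-L(x,v,u)$,
\[
H(x,p^*,0)-H(x,p^*,u)\;\leq\;L(x,v,u)-L(x,v,0)\;\leq\;H(x,p^*_u,0)-H(x,p^*_u,u).
\]
Dividing by $u>0$ and letting $u\to 0^+$, \textbf{(H4)} drives the lower side to $-H_u(x,p^*,0)$; by property (iii) the $p^*_u$ stay in a bounded subset of $T^*M$, so any subsequential limit $\tilde p$ lies in $D_2^-L(x,v,0)$ by upper semicontinuity, and the locally uniform form of \textbf{(H4)} drives the upper side to $-H_u(x,\tilde p,0)$. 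The symmetric analysis for $u\to 0^-$ reverses inequalities. The delicate technical step is establishing that $H_u(x,\cdot,0)$ is independent of the choice $\wtp\in D_2^-L(x,v,0)$, which I plan to extract from \textbf{(H1)} by a convexity-averaging argument: for $p_0,p_1\in D_2^-L(x,v,0)$ and $p_\alpha=(1-\alpha)p_0+\alpha p_1$, the convexity of $H(x,\cdot,u)$ in $p$ combined with the affinity of $H(x,\cdot,0)$ on $D_2^-L(x,v,0)$ gives
\[
H(x,p_\alpha,u)-H(x,p_\alpha,0)\leq(1-\alpha)[H(x,p_0,u)-H(x,p_0,0)]+\alpha[H(x,p_1,u)-H(x,p_1,0)];
\]
dividing by $u\to 0^{\pm}$ and using \textbf{(H4)} shows that $H_u(x,\cdot,0)$ is both convex and concave along $[p_0,p_1]$, and feeding the resulting affinity back into the sandwich (the limits $\tilde p$ from $u\to 0^{\pm}$ must respectively achieve the infimum and supremum of $H_u$ on $D_2^-L(x,v,0)$) collapses these values to a single constant. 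Thus $L_u(x,v,0)$ exists, equals $-H_u(x,\wtp,0)$ for every $\wtp\in D_2^-L(x,v,0)$, and is strictly negative since $H_u>0$. The continuity of $L_u(\cdot,\cdot,0)$ on $TM$ then follows from continuity of $H_u(\cdot,\cdot,0)$, upper semicontinuity of $D_2^-L$, and the $\wtp$-independence just established.

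For item (4), I would argue by contradiction: if \eqref{ap_local_uni} failed uniformly on some compact $K\subset TM$, there would exist $\vep>0$, a sequence $u_n\to 0$ and $(x_n,v_n)\in K$ with $|[L(x_n,v_n,u_n)-L(x_n,v_n,0)]/u_n - L_u(x_n,v_n,0)|\geq\vep$. Pass to a subsequence with $(x_n,v_n)\to(x,v)\in K$; pick $p_n\in D_2^-L(x_n,v_n,u_n)$ and $q_n\in D_2^-L(x_n,v_n,0)$, both contained in a fixed compact subset of $T^*M$ by property (iii) applied to a compact neighbourhood of $K\times\{0\}$; refine to $p_n\to p$ and $q_n\to q$, both in $D_2^-L(x,v,0)$ by upper semicontinuity. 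The sandwich at $(x_n,v_n,u_n)$ combined with the locally uniform form of \textbf{(H4)} forces $[L(x_n,v_n,u_n)-L(x_n,v_n,0)]/u_n\to -H_u(x,p,0)=-H_u(x,q,0)=L_u(x,v,0)$ by item (3), while continuity of $L_u(\cdot,\cdot,0)$ gives $L_u(x_n,v_n,0)\to L_u(x,v,0)$, contradicting the choice of $\vep$. The main obstacle throughout is the possible non-singleton nature of $D_2^-L(x,v,0)$; it is handled by the convexity-averaging step in the preceding paragraph, which guarantees the $\wtp$-independence on which both the identification and the continuity rely.
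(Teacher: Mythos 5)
Your overall architecture matches the paper's: item (1) via Legendre--Fenchel duality, the two-sided sandwich coming from \eqref{superdiff_equiv}, boundedness and upper semicontinuity of $D_2^-L$, and the locally uniform form of \textbf{(H4)}; your compactness-and-contradiction proof of item (4) is a sound variant of the paper's direct estimate. You have also correctly isolated the crux, namely why $H_u(x,\cdot,0)$ takes the \emph{same} value at every point of $D_2^-L(x,v,0)$. Your convexity-averaging step is valid and shows that $H_u(x,\cdot,0)$ is \emph{affine} on this set (using that $H(x,\cdot,0)=\du{\cdot,v}_x-L(x,v,0)$ is affine there). But the concluding sentence --- that affinity plus the sandwich ``collapses these values to a single constant'' --- is a non sequitur: a non-constant affine function on a compact convex set has distinct infimum and supremum, and your own analysis only yields that the right derivative of $u\mapsto L(x,v,u)$ at $0$ equals $-\min H_u$ and the left derivative equals $-\max H_u$ over $D_2^-L(x,v,0)$. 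Nothing in your argument forces these to coincide.

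Moreover, they need not coincide under \textbf{(H1)}--\textbf{(H4)} alone, so the gap is not repairable without an extra hypothesis. Take $M=\T^1$, $m(p)=\max(|p|-1,0)$, $\phi(p)=m(p)+m(p)^2$, $c(p)=\max(-1,\min(1,p))$, and
\[
H(x,p,u)=\phi(p)+\tfrac{1}{\pi}\arctan(u)\,c(p)+2u .
\]
One checks that $H(x,\cdot,u)$ is convex for every $u\in\R$ (the concave kink of $c$ at $p=1$ has magnitude $|\tfrac1\pi\arctan u|<\tfrac12$, dominated by the convex kink of $\phi$ there), that $H$ is strictly increasing in $u$, superlinear at $u=-\rro$, and that $H_u(x,p,0)=2+c(p)/\pi>0$ is continuous with globally uniform convergence of the difference quotients. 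Here $D_2^-L(x,0,0)=[-1,1]$, on which $H_u(x,\cdot,0)$ is affine and non-constant, and a direct computation gives $L(x,0,u)=\tfrac1\pi|\arctan u|-2u$, whose one-sided derivatives at $u=0$ are $\tfrac1\pi-2$ and $-\tfrac1\pi-2$: the partial derivative $L_u(x,0,0)$ does not exist. (The paper's own proof stumbles at the same spot: the point $p_0$ in \eqref{left_ine3} depends on the sign of $r$, and the assertion in item (3) that $-H_u(x,p_0,0)r\geq-H_u(x,\wtp,0)r$ holds with a single $p_0$ for all $r\neq 0$ is precisely the unproved constancy.) Both your argument and the paper's go through verbatim if one additionally assumes, e.g., that $D_2^-L(x,v,0)$ is a singleton for all $(x,v)$ (strict convexity of $\GG$ in $p$), or that $H_u(x,\cdot,0)$ is constant on each face where $\GG(x,\cdot)$ is affine.
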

\begin{proof}
(1)~ This is standard in convex analysis, see for instance \cite[Theorem A. 2.6]{Cannarsa_Sinestrari_book}.

(2)~ 
 We first show that the partial derivative $L_u(x, v, 0)$ exists. 
Let us fix a point $(x, v)\in TM$. For each $r\in\R\setminus\{0\}$ and small $\vep>0$, by definition,  for all $\wtp\in D_2^-L(x,v,0)$ we have
\begin{align*}
	L(x,v,0)=\du{\wtp, v}_x-H(x,\wtp,0),\quad
	L(x,v,\vep r)\geq \du{\wtp, v}_x-H(x,\wtp,\vep r).\end{align*}
 Hence 
	$L(x,v,\vep r)-L(x,v,0)\geq H(x,\wtp, 0)-H(x,\wtp, \vep r).$
Dividing both sides  by $\vep>0$ and sending $\vep\to 0$, one gets
\begin{equation}\label{left_ine1}
	\liminf_{\vep\to 0^+}\frac{L(x,v,\vep r)-L(x,v,0)}{\vep}\geq-H_u(x,\wtp,0)r,\quad \text{~for all~} \wtp\in D_2^-L(x,v,0).
\end{equation}
Next, we consider the  supremum limit,  and pick a subsequence $\vep_n\to 0^+$  satisfying 
\begin{equation}\label{left_ine2}
	\lim_{n\to \infty}\frac{L(x,v,\vep_n r)-L(x,v,0)}{\vep_n}=\limsup_{\vep\to 0^+}\frac{L(x,v,\vep r)-L(x,v,0)}{\vep}.
\end{equation}
For each $n$, we pick one element $p_n\in D^-_2L(x,v,\vep_n r)$. The sequence $\{p_n\}_n$ is bounded, then, without loss of generality, we can assume $p_n$ converges to some point $p_0$. Note that $p_0\in D^-_2L(x,v,0)$  as a result of the upper semi-continuity. By definition, 
\begin{align*}
	L(x,v,0)\geq \du{p_n, v}_x-H(x,p_n,0),\quad
	L(x,v,\vep_n r)= \du{p_n, v}_x-H(x,p_n,\vep_n r),
\end{align*}
which leads to
\begin{equation}\label{ineq_vepn_1}
	\frac{L(x,v,\vep_n r)-L(x,v,0)}{\vep_n}\leq \frac{H(x,p_n, 0)-H(x,p_n, \vep_n r)}{\vep_n}.
\end{equation}
Since $\{p_n\}_n$ is bounded and $M$ is compact, it follows from the locally uniform convergence \eqref{H_locaunif_conv} of $H$ that for each $\delta>0$, there exists $N=N(\delta)$ such that 
\begin{equation*}
	\left|\frac{H(x,p_n, \vep_n r)-H(x,p_n, 0)}{\vep_n r} -H_u(x, p_n, 0)\right|<\delta, \quad\text{for all} ~ n\geq N.
\end{equation*}
Hence, \eqref{ineq_vepn_1} is now reduced to
\begin{equation*}
\frac{L(x,v,\vep_n r)-L(x,v,0)}{\vep_n}\leq -H_u(x, p_n, 0)r+r\delta, \quad\text{for all} ~ n\geq N.
\end{equation*}
As we have assumed $H_u(x,p,0)\in C(T^*M)$ in assumption {\bf(H4)}, 
\begin{equation*}
\lim_{n\to\infty}\frac{L(x,v,\vep_n r)-L(x,v,0)}{\vep_n}\leq -H_u(x, p_0, 0)r+r\delta.
\end{equation*}
Sending $\delta\to 0$ and combining with inequality \eqref{left_ine2},
\begin{equation}\label{left_ine3}
\limsup_{\vep\to 0^+}\frac{L(x,v,\vep r)-L(x,v,0)}{\vep}\leq -H_u(x, p_0, 0)r.
\end{equation}
Therefore, by the arbitrariness of $r\in\R\setminus\{0\}$, we conclude from inequalities \eqref{left_ine1} and \eqref{left_ine3} that 
\begin{equation*}
\lim_{u\to 0}\frac{L(x,v,u )-L(x,v,0)}{u}= -H_u(x, p_0, 0).
\end{equation*}
This finishes the proof of item (2).

(3)~ Observe that inequalities \eqref{left_ine1} and \eqref{left_ine3} also imply that
$-H_u(x,p_0,0) r\geq -H_u(x,\wtp,0)r$ for all $~\wtp\in D_2^-L(x,v,0)$ and all $r\in\R\setminus\{0\}$. This means 
$H_u(x,p_0,0)=H_u(x,\wtp,0)$ for all $\wtp\in D_2^-L(x,v,0)$.
We therefore obtain
\begin{equation}\label{dsn_choice_of_p}
	L_u(x,v,0)=-H_u(x,\wtp,0), \quad\text{for all}~\wtp\in D_2^-L(x,v,0).
\end{equation}
Thanks to $H_u(x,p,0)$ $\in C(T^*M)$, $L_u(x, v,0)\in C(TM)$ follows from  \eqref{dsn_choice_of_p} and the upper semi-continuity of the set-valued map $(x, v)\mapsto D^-_2L(x,v,0)$.

(4)~ Now, it only remains to prove the locally uniform convergence \eqref{ap_local_uni}. Let $B\subset TM$ be a compact subset, it suffices to prove that for every $\vep>0$, there exists $\delta\in(0,\rro)$ such that 
\begin{align}\label{prf_L_luc}
	\left|\frac{L(x,v,u)-L(x,v,0)}{u}-L_u(x,v,0)\right|< \vep,\quad \text{for all~}  (x, v)\in B, 0<|u|< \delta.
\end{align}
In fact, for each $(x,v)\in B$, we pick one element $\px\in D_2^-L(x,v,0)$ and one element $\pxu\in D_2^-L(x,v,u)$. This implies
\begin{align*}
	L(x,v,0)=\du{\px, v}_x-H(x,\px,0),\quad
	L(x,v,u)= \du{\pxu, v}_x-H(x,\pxu,u),\\
	L(x,v,u)\geq \du{\px, v}_x-H(x,\px,u),\quad L(x,v,0)\geq \du{\pxu, v}_x-H(x,\pxu,0).
\end{align*}
Then we derive 
\begin{equation*}
	H(x,\px,0)-H(x,\px, u)\leq L(x,v,u)-L(x,v,0)\leq H(x,\pxu,0)-H(x,\pxu, u).
\end{equation*}
 Using $L_u(x,v,0)=-H_u(x,\px,0)$ one gets that for $u\neq 0$,
 \begin{align}\label{maxI1I2}
 	\left|\frac{L(x,v,u)-L(x,v,0)}{u}-L_u(x,v,0)\right|\leq\max\{I_1(x,v,u), I_2(x,v,u)\}
 \end{align}
 where 
 \begin{equation*}
 	I_1(x,v,u):=\left|\frac{H(x,\px,u)-H(x,\px,0)}{u}-H_u(x,\px,0)\right|
 \end{equation*}
and
\begin{equation*}
	I_2(x,v,u):=\left|\frac{H(x,\pxu,u)-H(x,\pxu,0)}{u}-H_u(x,\px,0)\right|
\end{equation*}
For each $\vep>0$, by the locally uniform convergence \eqref{H_locaunif_conv} of $H$, there exists $\delta_1\in(0,\rro)$ such that 
\begin{equation}\label{I11}
	|I_1(x,v,u)|<\vep,\quad\text{for all~} (x,v)\in B,~ 0<|u|< \delta_1.
\end{equation}
Here, we have used the fact that  ${\px}$ is  bounded uniformlly for all $(x,v)\in B$. Similarly, the locally uniform convergence \eqref{H_locaunif_conv}  also implies that  there exists $\delta_2\in(0,\rro)$ such that 
\begin{equation}
	\left|\frac{H(x,\pxu,u)-H(x,\pxu,0)}{u}-H_u(x,\pxu,0)\right|<\frac{\vep}{2},~\text{for all~} (x,v)\in B, ~0<|u|< \delta_2.
\end{equation}
Here,  we have used the fact that  ${\pxu}$ is uniformly bounded, namely $|\pxu|_x\leq K_0$ with the constant $K_0$ as a common bound, for all $(x,v)\in B$ and $0<|u|< \delta_2$. Hence,
\begin{equation}\label{I22}
	|I_2(x,v,u)|<\left|H_u(x,\pxu,0)-H_u(x,\px,0)\right|+\frac{\vep}{2}, \quad\text{for all~} (x,v)\in B, ~0<|u|< \delta_2.
\end{equation}

On the other hand, it is well known that a continuous function is  uniformly continuous on a compact set, so there exists $\kappa>0$ small enough such that
\begin{equation}\label{I23}
	|H_u(x,p,0)-H_u(x,p',0)|\leq \frac{\vep}{2}, \quad\text{for all~} x\in M,~ |p|_x\leq K_0, ~|p'|_x\leq K_0,~ |p-p'|_x \leq\kappa.
\end{equation}
Due to the upper semi-continuity of the set-valued map $(x,v,u)\mapsto$ $D_2^-L(x,v,u)$,  one can verify that there exists $\delta_3\in(0,\rro)$ small enough such that 
\begin{equation}\label{I24}
	D^-_2L(x,v,u)\subset  D_2^-L(x,v,0) +\kappa, \quad\text{for all~} (x,v)\in B, ~|u|< \delta_3,
\end{equation}
where $ D_2^-L(x,v,0) +\kappa$ stands for the $\kappa$-neighborhood of the set $D_2^-L(x,v,0)$.

Since $\pxu\in D_2^-L(x,v,u)$ and $\px\in D_2^-L(x,v,0)$, we conclude from   \eqref{I22}--\eqref{I24} and item (3) above that
\begin{equation}\label{I25}
	|I_2(x,v,u)|<\frac{\vep}{2}+\frac{\vep}{2}=\vep, \quad\text{for all~} (x,v)\in B, ~0<|u|< \min\{\delta_2,\delta_3\}.
\end{equation}
Finally, by setting $\delta=\min\{\delta_1, \delta_2, \delta_3\}$, inequality \eqref{prf_L_luc} is now a straightforward consequence of \eqref{maxI1I2}, \eqref{I11} and \eqref{I25}. This finishes the proof.
\end{proof}

In the sequel, for each $\lam\in(0,\alo)$, we introduce a Hamiltonian $\HH^\lam(x, p)$ of the form
\begin{align}\label{new_Ham}
	\HH^{\lam}(x, p):=H(x,p, \lam u^\lam(x)),
\end{align}
where $u^\lam$ is the unique solution of \eqref{eq_H_lambda}. 
The new Hamiltonian $\HH^\lam: T^*M\to\R$ is a continuous function defined only on the cotangent bundle. Then  it is easy to verify the following properties:

\begin{Lem}\label{lem_breveHL}
	For $\lam\in(0, \alo)$, the Lagrangian associated to the Hamiltonian $\HH^{\lam}$ is exactly 
    \begin{align}\label{new_Lag}
    	\LL^\lam(x, v):=L(x, v,\lam u^{\lam}(x)).
    \end{align}
Moreover,
    \begin{enumerate}
	\item  $\HH^\lam$ and $\LL^\lam$ are continuous functions, and are both convex and superlinear in the fibers.
	\item $u^\lam$ is also a viscosity solution of the H-J equation $\HH^\lam(x, Du(x))=c(\GG)$. In particular,  the critical value $c(\HH^\lam)=c(\GG)$.
	\item  $\HH^{\lam}$ and  $\LL^{\lam}$, respectively, converge  to $\GG(x,p)$ and $L_{\GG}(x,v)$ uniformly on every compact subset.
	\end{enumerate}
\end{Lem}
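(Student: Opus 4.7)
The plan is to verify each assertion by direct computation, exploiting the \emph{a priori} bound $\|u^\lambda\|_\infty \leq \mathbf{C_0}$ from Proposition \ref{uni_bound} together with the definition $\alo = \rro/\mathbf{C_0}$, which guarantees
\[
|\lambda u^\lambda(x)| \leq \lambda \mathbf{C_0} < \rro \quad \text{for every } \lambda \in (0,\alo),\ x \in M.
\]
This places the $u$-slice $\lambda u^\lambda(x)$ safely inside the interval $(-\rro,+\infty)$ on which $L$ is finite-valued, continuous, convex and superlinear in $v$ by Lemma \ref{L1_L4}(1). The identification of the associated Lagrangian is then immediate from the Legendre-Fenchel definition: for each fixed $x$, since $\lambda u^\lambda(x)$ is a fixed scalar,
\[
\sup_{p \in T_x^*M}\bigl\{\du{p,v}_x - \HH^\lambda(x,p)\bigr\} = \sup_{p \in T_x^*M}\bigl\{\du{p,v}_x - H(x,p,\lambda u^\lambda(x))\bigr\} = L(x,v,\lambda u^\lambda(x)) = \LL^\lambda(x,v).
\]

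For item (1), continuity of $\HH^\lambda$ on $T^*M$ and of $\LL^\lambda$ on $TM$ would follow because each is the composition of the continuous map $x \mapsto \lambda u^\lambda(x)$ (using the Lipschitz regularity of $u^\lambda$) with $H$ and $L$ respectively; convexity in the fibers is inherited from \textbf{(H1)} and from Lemma \ref{L1_L4}(1). For superlinearity I would use the monotonicity \textbf{(H3)}: since $\lambda u^\lambda(x) \geq -\rro$, one has $\HH^\lambda(x,p) = H(x,p,\lambda u^\lambda(x)) \geq H(x,p,-\rro)$, so the fiberwise superlinearity of $\HH^\lambda$ is inherited from \textbf{(H2*)}, and superlinearity of $\LL^\lambda$ follows either by the same monotonicity for $L$ or via Legendre duality. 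For item (2), the key observation is that the viscosity test plugs the test function value into the $u$-slot: if $\varphi \in C^1(M)$ satisfies $\varphi \geq u^\lambda$ and $\varphi(x_0)=u^\lambda(x_0)$, then the subsolution property of $u^\lambda$ for \eqref{eq_H_lambda} gives
\[
H(x_0, D\varphi(x_0), \lambda \varphi(x_0)) = H(x_0, D\varphi(x_0), \lambda u^\lambda(x_0)) = \HH^\lambda(x_0, D\varphi(x_0)) \leq c(\GG),
\]
with the supersolution side symmetric. Hence $u^\lambda$ solves $\HH^\lambda(x, Du) = c(\GG)$ in the viscosity sense, and the equality $c(\HH^\lambda) = c(\GG)$ follows from the uniqueness of the critical value stated in Proposition \ref{crit_val_solu}, applied to the continuous, fiberwise coercive Hamiltonian $\HH^\lambda$.

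For item (3), since $\|u^\lambda\|_\infty \leq \mathbf{C_0}$, we have $\|\lambda u^\lambda\|_\infty \to 0$ as $\lambda \to 0$. Fixing a compact set $K \subset T^*M$, the Hamiltonian $H$ is uniformly continuous on the compact set $K \times [-\rro/2, \rro/2]$, which immediately gives $\HH^\lambda \to \GG$ uniformly on $K$. The same argument applied to $L$, now using that $L$ is continuous (hence uniformly continuous on compacts of $TM \times (-\rro,+\infty)$) by Lemma \ref{L1_L4}(1), yields $\LL^\lambda \to L_\GG$ uniformly on compact subsets of $TM$. The main---and essentially only---subtle point of the whole proof is the ``frozen'' nature of the variable $\lambda u^\lambda(x)$: although $u$ appears nonlinearly in the original equation, once the specific solution $u^\lambda$ is substituted we obtain a bona fide Hamiltonian depending only on $(x,p)$, and the viscosity test is consistent precisely because the test function agrees with $u^\lambda$ at the contact point; no further technicality is needed.
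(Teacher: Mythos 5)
Your proposal is correct and follows exactly the route the paper intends: the paper omits the proof, noting only that the lemma ``follows immediately from the estimate $\|\lam u^\lam\|_\infty< \alo\mathbf{C_0}<\rro$ and assumptions (H1)--(H3),'' and your argument is precisely the natural filling-in of those details (freezing $\lam u^\lam(x)$ in the $u$-slot, monotonicity to inherit superlinearity from $H(x,p,-\rro)$ and $L(x,v,\rro)$, the contact-point identity $\varphi(x_0)=u^\lam(x_0)$ for the viscosity test, and uniform continuity on compacts for item (3)). No gaps.
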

This lemma follows immediately from the estimate  $\|\lam u^\lam\|_\infty< \alo\mathbf{C_0}<\rro$ and assumptions {\bf(H1)--\bf(H3)}, so we omit the proof.

Then we can prove the following result:
\begin{Lem}\label{existence_minicurve}
	For each $\lam\in (0,\alo)$ and $x\in M$, there exists a Lipschitz curve $\xilax: (-\infty,0]\to M$ with $\xilax(0)=x$ such that 
	\begin{align}\label{min_curve}
		u^\lam(x)=u^\lam(\xi_x^\lam(-t))+\int^0_{-t} L(\xilax(\tau),\dxilax(\tau),\lam u^\lam(\xilax(\tau)))+c(\GG)\, d\tau,
	\end{align}
	for every $t>0$. In addition, there exists a constant $\sigma_0$, independent of $\lam$ and $x$, such that $\|\dxilax\|_{L^{\infty}}\leq \sigma_0$. In particular, the function $u^\lam\circ\xilax(s)$ is differentiable almost everywhere, and
	\begin{align}\label{du_lam_ae}
		\frac{d}{d s}u^\lam\big(\xilax(s) \big)=&L(\xilax(s),\dxilax(s),\lam u^\lam (\xilax(s)) )+c(\GG),\quad \textup{a.e.} ~ s\in(-\infty,0).
	\end{align}
\end{Lem}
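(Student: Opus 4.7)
My strategy is to apply the viscosity/weak KAM tools already recalled to the \emph{frozen} Hamiltonian $\HH^\lam(x,p)=H(x,p,\lam u^\lam(x))$ and its Legendre dual $\LL^\lam(x,v)=L(x,v,\lam u^\lam(x))$. By Lemma~\ref{lem_breveHL}, $u^\lam$ is a viscosity solution of the critical equation $\HH^\lam(x,Du)=c(\HH^\lam)=c(\GG)$, so item~(5) of Proposition~\ref{proper_criti_sol} yields, for every $t>0$, a Lipschitz curve $\xi^{t}\colon[-t,0]\to M$ with $\xi^{t}(0)=x$ satisfying
\[
u^\lam(x)=u^\lam(\xi^{t}(-t))+\int_{-t}^{0}\LL^\lam(\xi^{t}(\tau),\dot\xi^{t}(\tau))+c(\GG)\,\dd\tau.
\]
By the dynamic programming principle, the restriction of $\xi^{t}$ to any subinterval $[-s,0]$ with $0<s<t$ is again a calibrated curve ending at $x$.

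\textbf{Uniform Lipschitz bound.} This is the crux. Since $u^\lam$ is a Lipschitz subsolution of $\HH^\lam(x,Du)\leq c(\GG)$ with $\mathrm{Lip}(u^\lam)\leq\mathbf{M_0}$ (Proposition~\ref{uni_bound}), at almost every $s$ the derivatives $Du^\lam(\xi^{t}(s))$ and $\dot\xi^{t}(s)$ both exist, and the calibration identity together with the chain rule gives
\[
\du{Du^\lam(\xi^{t}(s)),\dot\xi^{t}(s)}_{\xi^{t}(s)}=\frac{\dd}{\dd s}u^\lam(\xi^{t}(s))=\LL^\lam(\xi^{t}(s),\dot\xi^{t}(s))+c(\GG).
\]
Combining this with the subsolution inequality $\HH^\lam(\xi^{t}(s),Du^\lam(\xi^{t}(s)))\leq c(\GG)$ and the Fenchel inequality $\du{p,v}\leq\HH^\lam(x,p)+\LL^\lam(x,v)$ forces equality in Fenchel, so
\[
\dot\xi^{t}(s)\in D_2^-H(\xi^{t}(s),Du^\lam(\xi^{t}(s)),\lam u^\lam(\xi^{t}(s))).
\]
Because $|Du^\lam|\leq\mathbf{M_0}$ and $|\lam u^\lam|\leq\alo\mathbf{C_0}<\rro$, the triples $(x,p,u)$ range over a fixed compact subset of $T^*M\times(-\rro,+\infty)$ that is independent of $\lam$ and $x$. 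The analogue for $H$ of property~(iii) preceding Lemma~\ref{L1_L4} then provides a constant $\sigma_0$, depending only on $\mathbf{M_0}$, $\mathbf{C_0}$, $\alo$ and $\rro$, such that $|\dot\xi^{t}(s)|_{\xi^{t}(s)}\leq\sigma_0$ a.e., uniformly in $\lam$, $x$ and $t$.

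\textbf{Limit $t\to\infty$ and a.e. identity.} With this uniform velocity bound, the family $\{\xi^{k}\}_{k\in\N}$ is equi-Lipschitz on every compact subinterval of $(-\infty,0]$, so a diagonal Arzel\`a--Ascoli argument extracts a subsequence converging uniformly on compacta to a Lipschitz curve $\xilax\colon(-\infty,0]\to M$ with $\xilax(0)=x$ and $\|\dxilax\|_{L^\infty}\leq\sigma_0$. For each fixed $t>0$, lower semi-continuity (Lemma~\ref{lower_semi_cont}) applied to the calibrated restrictions on $[-t,0]$, combined with the continuity of $u^\lam$, yields
\[
u^\lam(x)-u^\lam(\xilax(-t))\geq\int_{-t}^{0}\LL^\lam(\xilax(\tau),\dxilax(\tau))+c(\GG)\,\dd\tau,
\]
while the standard action estimate for the Lipschitz subsolution $u^\lam$ along the admissible curve $\xilax$ gives the reverse inequality, whence the equality \eqref{min_curve}. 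Finally, $s\mapsto u^\lam(\xilax(s))$ is Lipschitz and \eqref{min_curve} exhibits an $L^\infty$ integrand as its derivative in $t$; differentiating at Lebesgue points of $\LL^\lam(\xilax,\dxilax)$ produces \eqref{du_lam_ae}.

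\textbf{Main obstacle.} The delicate point is the uniform Lipschitz bound on the minimizers: the constant $\sigma_0$ must depend only on the a priori Lipschitz and $L^\infty$ estimates of Proposition~\ref{uni_bound} and on the data $\rro$, not on $\lam$, the solution, or the endpoint $x$. This is secured by funnelling $(\xi^{t},Du^\lam(\xi^{t}),\lam u^\lam(\xi^{t}))$ into a single compact region of $T^*M\times(-\rro,+\infty)$ and invoking the compactness property of $D_2^-H$. Once this is in hand, the passage $t\to\infty$ and the extraction of the a.e.\ identity are essentially routine from compactness, semi-continuity, and the action inequality for subsolutions.
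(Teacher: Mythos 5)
Your overall architecture is the same as the paper's: freeze the Hamiltonian into $\HH^\lam$, invoke item (5) of Proposition \ref{proper_criti_sol} to produce calibrated curves on finite intervals, establish a velocity bound independent of $\lam$, $x$ and the interval length, then pass to the limit by a diagonal Arzel\`a--Ascoli argument and Lemma \ref{lower_semi_cont}, recovering the reverse inequality from the action estimate for the Lipschitz subsolution $u^\lam$. The limiting step and the final a.e.\ differentiation are fine. The problem is precisely the step you single out as the crux.

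You assert that for a.e.\ $s$ the gradient $Du^\lam(\xi^{t}(s))$ exists, and then use the chain rule to write $\du{Du^\lam(\xi^{t}(s)),\dot\xi^{t}(s)}=\frac{d}{ds}u^\lam(\xi^{t}(s))$. Rademacher's theorem gives differentiability of $u^\lam$ almost everywhere on $M$, but the image of the curve is Lebesgue-null in $M$ (for $\dim M\geq 2$) and may lie entirely inside the non-differentiability set of $u^\lam$; the a.e.\ differentiability of the composition $s\mapsto u^\lam(\xi^{t}(s))$ does not produce a genuine differential of $u^\lam$ at $\xi^{t}(s)$. In the Tonelli setting one can indeed prove that weak KAM solutions are differentiable along calibrated curves at interior times, but that proof uses strict convexity and regularity of $H$, neither of which is available here --- the whole point of the paper is to handle merely continuous, possibly non-strictly convex Hamiltonians. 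Without this differentiability, your Fenchel-equality argument placing $\dot\xi^{t}(s)$ in $D_2^-H\big(\xi^{t}(s),Du^\lam(\xi^{t}(s)),\lam u^\lam(\xi^{t}(s))\big)$ never gets off the ground. The paper sidesteps the issue entirely: it uses only the difference-quotient bound $u^\lam(\xi(-t+\Delta t))-u^\lam(\xi(-t))\leq \mathbf{M_0}\, d\big(\xi(-t),\xi(-t+\Delta t)\big)\leq \mathbf{M_0}\int|\dot\xi|$, the calibration identity, and the superlinear lower bound $\LL^\lam(x,v)+c(\GG)\geq L(x,v,\rro)+c(\GG)\geq(\mathbf{M_0}+1)|v|_x-\sigma_0$ (valid since $\|\lam u^\lam\|_\infty<\rro$ and $L$ is decreasing in $u$), which together give $\int_{-t}^{-t+\Delta t}|\dot\xi|\,ds\leq\sigma_0\Delta t$ and hence $|\dot\xi|\leq\sigma_0$ a.e. If you replace your differentiability argument by this estimate, the rest of your proof goes through as written.
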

\begin{proof}
	We fix $\lam\in(0,\alpha_0)$ and $x\in M$. From Lemma \ref{lem_breveHL} and item (5) of Proposition \ref{proper_criti_sol}, we have  
	\begin{equation}\label{gjgj1}
		u^\lam(x)=\min_{y\in M} [u^\lam(y)+h^\lam_t(y,x)].
	\end{equation}
Here, $h^\lam_t(x,y)$ stands for the action function of $\LL^\lam$. This means that for each $n\in \Z_+$, there exists a Lipschitz minimizing curve $\xi^\lam_{x,n}:[-n,0]\to M$,  $\xi^\lam_{x,n}(0)=x$ satisfying 
	\begin{equation}\label{cal_1}
u^{\lambda}(\xi^\lam_{x,n}(t_1))=u^{\lambda}(\xi^\lam_{x,n}(t_2))+\int^{t_1}_{t_2}\LL^\lam (\xi^\lam_{x,n}(s),\dot{\xi}^\lam_{x,n}(s))+c(\GG)\ ds,
\end{equation}
for all $-n\leq t_2<t_1\leq 0$. 
Now we claim that  the sequence $\{\xi^\lam_{x,n}\}_n$ is equi-Lipschitz. Indeed, we have already known from Theorem \ref{uni_bound} that $\|u^\lam\|_\infty\leq \mathbf{C_0}$ and $\text{Lip}(u^{\lambda})\leqslant \mathbf{M_0}$, then $\|\lam u^\lam\|_\infty<\alo\mathbf{C_0}<\rro$. Recall that $L(x,v,u)$ is decreasing in $u$ and $L(x,v,\rro): TM\to\R$ is superlinear in $v$, 
this implies that there exists  a constant $\sigma_0>0$, independent of $\lam$ and $x\in M$, such that
\begin{align}
\LL^\lam(x,v)+c(\GG)\geqslant  L(x,v,\rro)+c(\GG)\geqslant
(\mathbf{M_0}+1)|v|_x-\sigma_0.
\end{align}

Next, for each $-t\in (-n,0)$ and $\Delta t>0$, we derive 
\begin{align*}
\mathbf{M_0} d\big(\xi^\lam_{x,n}(-t),\xi^\lam_{x,n}(-t+\Delta t)\big)&\geqslant u^{\lam}(\xi^\lam_{x,n}(-t+\Delta t))-u^{\lam}(\xi^\lam_{x,n}(-t))\\
 &=\int^{-t+\Delta t}_{-t}\LL^\lam(\xi^\lam_{x,n}(s),\dot\xi^\lam_{x,n}(s))+c(\GG)\ ds\\
 &\geqslant \int_{-t}^{-t+\Delta t} (\mathbf{M_0}+1)|\dot{\xi}^\lam_{x,n}(s)|-\sigma_0\,ds\\
 &\geqslant\mathbf{M_0} d\big(\xi^\lam_{x,n}(-t),\xi^\lam_{x,n}(-t+\Delta t)\big)+\int_{-t}^{-t+\Delta t} |\dot{\xi}^\lam_{x,n}(s)|ds-\sigma_0\Delta t,
\end{align*}
hence $$\int_{-t}^{-t+\Delta t} |\dot{\xi}^\lam_{x,n}(s)|\, ds\leqslant \sigma_0\Delta t.$$ 
Dividing  both sides by $\Delta t$ and sending $\Delta t\to 0^+$, we infer 
\begin{equation*}
 \left|\dot{\xi}^\lam_{x,n}(-t)\right|_{\xi^\lam_{x,n}(-t)}\leqslant \sigma_0, \quad \textup{a.e.}~~ -t\in (-n,0).
\end{equation*}
Thus, $\{\xi^\lam_{x,n}\}_n$ is equi-Lipschitz. In particular, by 
the Ascoli-Arzel\`a theorem, there exists a subsequence  $\xi^\lam_{x,n_k}$   converging uniformly, on any compact subinterval of $(-\infty,0]$, to a Lipschitz curve $\xi^\lam_x:(-\infty, 0]\to M$. Obviously,  $\|\dxilax\|_{L^{\infty}}\leq \sigma_0$.
Then it follows  from \eqref{cal_1} and Lemma 
\ref{lower_semi_cont} that 
\begin{align}
		u^\lam(\xi_x^\lam(0))\geq u^\lam(\xi_x^\lam(-t))+\int^0_{-t} \LL^\lam(\xilax(\tau),\dxilax(\tau))+c(\GG)\, d\tau,
	\end{align}
	for every $t>0$. The opposite inequality also holds as a result of \eqref{gjgj1}. This therefore proves equality \eqref{min_curve}.

Finally, by what we have shown above, $u^\lam\circ \xilax (-t)$ is a Lipschitz function of $t$ with $\mathbf{M_0}\sigma_0$ as a Lipschitz bound. Therefore,  one can differentiate \eqref{min_curve} in $t$ and obtain \eqref{du_lam_ae}.
\end{proof}

 Now, for each point $x\in M$ and each $\lam\in(0,\alo)$, we fix a Lipschitz minimizing curve $\xilax:(-\infty, 0]\to M$ with $\xilax(0)=x$ as in Lemma \ref{existence_minicurve}. Then we introduce a probability measure $\tilde\mu^\lam_x$ on $TM$  as follows: 
\begin{equation}\label{def_pro_meas}
	\int_{TM} f(y,v) d\tilde{\mu}^\lam_x(y,v):= \frac{\int_{-\infty}^0    f(\xilax(s),\dxilax(s))\, e^{-\lam\int_0^s L_u(\xilax(\tau),\dxilax(\tau),0)\ d\tau}\, ds}{ \int_{-\infty}^0   e^{-\lam\int_0^s L_u(\xilax(\tau),\dxilax(\tau),0)\ d\tau}\, ds},
\end{equation}
for every $f\in C_c(TM)$. Here we also point out that the function space $C_c(TM)$ can be replaced by $C(TM)$ since  $M$ is compact and $\|\dxilax\|_{L^\infty}\leq \sigma_0$.

\begin{Rem}\label{about_measurable}
It is easy to observe that $\dxilax(s)$ is  a Lebesgue measurable function of $s$. From Lemma \ref{L1_L4} we see that  $L_u(y,v,0)$ $\in C(TM)$ and $L_u(y, v, 0)<0$. Thus, the function
	$$s\mapsto L_u(\xilax(s),\dxilax(s),0)$$ is  Lebesgue measurable.  
	As $\|\dxilax\|_{L^{\infty}}\leq \sigma_0$,  there exist two constants $0<\KK_1\leqslant \KK$, independent of $\lam$ and $x\in M$, such that 
	\begin{equation}\label{Lu_upbound}
			-\KK\leq L_u(\xilax(s),\dxilax(s),0)\leq -\KK_1, \quad\textup{for a.e.~} s\in (-\infty, 0).
	\end{equation}
Thus the integral   \eqref{def_pro_meas} is well defined.
\end{Rem}

In particular, for the discounted case $H(x,p,u)=u+\GG(x,p)$, we have $L_u\equiv -1$, and \eqref{def_pro_meas} is therefore of the form 
\[\int_{TM} f(y,v) d\tilde{\mu}^\lam_x(y,v):= \lam\int_{-\infty}^0e^{\lam s}    f(\xilax(s),\dxilax(s))\, ds.\]
This coincides with  \cite[definition (3.5)]{DFIZ2016}.

Analogous to \cite[Lemma 3.6]{WYZ2020}, the representation \eqref{def_pro_meas} allows us to construct Mather measures from the following approximation scheme:

\begin{Pro}\label{weak_convergence}
Let $x\in M$.
The probability measures $\{\tilde{\mu}^\lam_x: \lam\in(0,\alo)\}$ are relatively compact, in the sense of weak convergence, in the space of  probability measures on $TM$. Moreover, for any  subsequence $\{\tilde{\mu}^{\lambda_i}_x\}_{i}$  weakly converging  to $\tilde{\mu}$ as $\lambda_i\to 0$, $\tilde{\mu}$ is  a Mather measure for the Lagrangian $L_\GG$.
\end{Pro}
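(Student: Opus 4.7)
The plan is to set up a ``Laplace-type average'' structure. Write $E_\lam(s):=\exp\bigl(-\lam\int_0^s L_u(\xilax(\tau),\dxilax(\tau),0)\,d\tau\bigr)$, so that $E_\lam(0)=1$ and $E_\lam'(s)=-\lam L_u(\xilax(s),\dxilax(s),0)E_\lam(s)$. The bounds $-\KK\leq L_u(\xilax,\dxilax,0)\leq -\KK_1<0$ from \eqref{Lu_upbound} yield $e^{\lam\KK s}\leq E_\lam(s)\leq e^{\lam\KK_1 s}$ for $s\leq 0$, so $E_\lam(s)\to 0$ exponentially as $s\to-\infty$ and $(\lam\KK)^{-1}\leq \int_{-\infty}^0 E_\lam(s)\,ds\leq (\lam\KK_1)^{-1}$. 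Relative compactness is then immediate: by Lemma \ref{existence_minicurve}, every $\tilde\mu^\lam_x$ is supported in the compact set $K:=\{(y,v)\in TM:|v|_y\leq\sigma_0\}$, and Prokhorov's theorem delivers sequential weak-$*$ compactness; any weak limit $\tilde\mu$ is again a probability measure supported in $K$.

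To identify $\tilde\mu$ as a Mather measure for $L_\GG$, I would establish closedness and the equality $\int L_\GG\,d\tilde\mu=-c(\GG)$. For closedness, fix $\varphi\in C^1(M)$ and note that along $\xilax$ the chain rule gives $\frac{d}{ds}\varphi(\xilax(s))=\du{D\varphi(\xilax(s)),\dxilax(s)}$. Integration by parts against $E_\lam$ on $(-\infty,0]$, the boundary term at $-\infty$ vanishing because $\varphi$ is bounded and $E_\lam\to 0$, yields
\begin{equation*}
\int_{-\infty}^0 \du{D\varphi(\xilax(s)),\dxilax(s)}\,E_\lam(s)\,ds = \varphi(x)+\lam\int_{-\infty}^0 \varphi(\xilax(s))\,L_u(\xilax(s),\dxilax(s),0)\,E_\lam(s)\,ds.
\end{equation*}
Dividing by $\int_{-\infty}^0 E_\lam\,ds$ rewrites this as
\begin{equation*}
\int_{TM}\du{D\varphi(y),v}_y\,d\tilde\mu^\lam_x(y,v)=\frac{\varphi(x)}{\int_{-\infty}^0 E_\lam\,ds}+\lam\int_{TM}\varphi(y)L_u(y,v,0)\,d\tilde\mu^\lam_x(y,v),
\end{equation*}
and both terms on the right are $O(\lam)$ (the first by the lower bound $\int E_\lam\geq(\lam\KK)^{-1}$, the second by the uniform bound on $\varphi L_u$ on $K$). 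Weak convergence on $C(K)$ then forces $\int\du{D\varphi,v}\,d\tilde\mu=0$.

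For minimality, I would apply the same integration-by-parts device to $s\mapsto u^\lam(\xilax(s))$, whose a.e.\ derivative is given by \eqref{du_lam_ae}. This produces
\begin{equation*}
\int_{TM} L(y,v,\lam u^\lam(y))\,d\tilde\mu^\lam_x + c(\GG)=\frac{u^\lam(x)}{\int_{-\infty}^0 E_\lam\,ds}+\lam\int_{TM} u^\lam(y)\,L_u(y,v,0)\,d\tilde\mu^\lam_x,
\end{equation*}
whose right-hand side is $O(\lam)$ uniformly in $x$ (using $\|u^\lam\|_\infty\leq\mathbf{C_0}$). Since $\|\lam u^\lam\|_\infty\to 0$ and $L$ is uniformly continuous on the compact set $K\times[-\rro,\rro]$, the integrand converges uniformly on $K$: $L(y,v,\lam u^\lam(y))\to L_\GG(y,v)$. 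Hence $\int L_\GG\,d\tilde\mu^\lam_x\to-c(\GG)$, and passing to the weak limit (using $L_\GG\in C(K)$) gives $\int L_\GG\,d\tilde\mu=-c(\GG)$. Combined with closedness and the variational characterization \eqref{mini_pro}, this identifies $\tilde\mu\in\wtM(L_\GG)$.

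The main structural insight, and the main difficulty, is the choice of the exponential weight so that $E_\lam'/E_\lam=-\lam L_u(\cdot,\cdot,0)$: this is precisely the factor that makes the two integrations by parts produce the normalizing denominator $\int E_\lam\,ds$ and hence $\tilde\mu^\lam_x$ cleanly on the right-hand side. Once this is recognized, everything else reduces to explicit exponential-decay estimates on $E_\lam$ (controlling the vanishing of the boundary term at $-\infty$ and the growth $\int E_\lam\asymp 1/\lam$) and uniform continuity of $L$ on a fixed compact set.
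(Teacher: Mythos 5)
Your proof is correct and follows essentially the same route as the paper: compact support of the measures gives relative compactness, and the two integrations by parts against $E_\lam$ (applied to $\varphi\circ\xilax$ for closedness and to $u^\lam\circ\xilax$ via \eqref{du_lam_ae} for minimality) reproduce the paper's argument, the only cosmetic difference being that you bound $|L(y,v,\lam u^\lam(y))-L_\GG(y,v)|$ by plain uniform continuity of $L$ where the paper invokes the differentiability estimate \eqref{ap_local_uni}, and either suffices. One trivial slip: $L$ is only continuous on $TM\times(-\rro,+\infty)$, so the compact set in your uniform-continuity step should be $K\times[-\rro',\rro']$ for some $\rro'<\rro$, which is available since $\|\lam u^\lam\|_\infty\to 0$.
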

\begin{proof}
According to Lemma \ref{existence_minicurve}, the support of each measure $\tilde{\mu}^\lam_x$  is contained in a common compact set $\{(y, v)\in TM: |v|_y\leqslant \sigma_0\}$, which implies the relative compactness. 

Next, without loss of generality, we assume that $\tilde{\mu}^{\lambda}_x$   weakly converges  to a limit measure $\tilde{\mu}$ as $\lam\to 0$. We first show that $\tilde{\mu}$ is a closed measure. For abbreviation, we write  $L_u(s)$ instead of $L_u(\xilax(s),\dxilax(s),0)$ in the following computations.  If  $\phi: M\to \R $ is  a $C^1$ function, then 
\begin{align}
\int_{TM} \du{D\phi, v}_y \,d\tilde{\mu}^{\lam}_x(y,v)=& \frac{\int_{-\infty}^0 e^{-\lam\int_0^s L_u(\tau)\ d\tau}\cdot  \frac{d}{d s}\phi(\xilax(s))\, ds}{\int_{-\infty}^0   e^{-\lam\int_0^s L_u(\tau)\ d\tau}\, ds}\nonumber\\
=&\frac{e^{-\lam\int_0^s L_u(\tau)\ d\tau} \phi(\xilax(s))\Big|^0_{-\infty} -\int_{-\infty}^0 \phi(\xilax(s))\frac{d}{ds}e^{-\lam\int_0^s L_u(\tau)\ d\tau}\, ds  }{\int_{-\infty}^0   e^{-\lam\int_0^s L_u(\tau)\ d\tau}\, ds}\nonumber\\
=&\frac{ \phi(x) -\int_{-\infty}^0 \phi(\xilax(s))\frac{d}{ds}e^{-\lam\int_0^s L_u(\tau)\ d\tau}\, ds }{\int_{-\infty}^0   e^{-\lam\int_0^s L_u(\tau)\ d\tau}\, ds}. \label{close_mea1}
\end{align}
Here, by estimate \eqref{Lu_upbound}, the denominator satisfies $\big|\int_{-\infty}^0   e^{-\lam\int_0^s L_u(\tau)\ d\tau}\, ds|\geq\frac{1}{\lam \KK} $. Besides, 
\begin{equation*}	
\bigg|\int_{-\infty}^0 \phi(\xilax(s))\frac{d}{ds}e^{-\lam\int_0^s L_u(\tau)\ d\tau}\, ds  \bigg|\leqslant\|\phi\|_\infty \int_{-\infty}^0 \frac{d}{ds}e^{-\lam\int_0^s L_u(\tau)\ d\tau}\, ds=\|\phi\|_\infty.
\end{equation*}
Here, we have used that fact that $\frac{d}{ds}e^{-\lam\int_0^s L_u(\tau)\ d\tau}>0$. Consequently, substituting the above inequalities to \eqref{close_mea1} one gets
\begin{equation}
\begin{split}
\int_{TM}\du{D\phi,v}_y\, d\tilde{\mu}(y,v)=\lim\limits_{\lam\to 0} \int_{TM}\du{D\phi, v}_y\, d\tilde{\mu}_x^{\lambda}(y,v)=0.
\end{split}
\end{equation}

Finally, it remains to show that $\int_{TM} L_\GG(y,v)+c(\GG)d\tilde{\mu}(y,v)=0$. Indeed, using \eqref{du_lam_ae} in Lemma \ref{existence_minicurve}, we deduce 
\begin{align*}
&\int_{TM} L_\GG(y,v)+c(\GG)d\tilde{\mu}(y,v)\\
=&\lim\limits_{\lam\to 0} \frac{\int_{-\infty}^0 \left[L(\xilax(s),\dxilax(s),0)+c(\GG)\right]e^{-\lam\int_0^s L_u(\tau)\ d\tau}\, ds}{\int_{-\infty}^0   e^{-\lam\int_0^s L_u(\tau)\ d\tau}\, ds}\\
=& \lim\limits_{\lam\to 0} \frac{\int_{-\infty}^0 \left[\frac{d}{ds} u^\lam(\xilax(s))+L(\xilax(s),\dxilax(s),0)-L(\xilax(s),\dxilax(s),\lam u^\lam(\xilax(s)))\right]e^{-\lam\int_0^s L_u(\tau)d\tau}\, ds}{\int_{-\infty}^0   e^{-\lam\int_0^s L_u(\tau)\ d\tau}\, ds}\\
=& \lim\limits_{\lam\to 0} \mathrm{I}(\lam).
\end{align*}
Since $\|\dxilax\|_{L^{\infty}}\leq \sigma_0$, the locally uniform convergence \eqref{ap_local_uni} implies that there exists $\delta>0$ small enough, such that
$|L(y,v,u)-L(y,v,0)-uL_u(y,v,0)|< |u|$ for all $|u|<\delta$, $y\in M$ and $|v|_y\leq \sigma_0$. Setting $K:=\max_{y\in M, |v|_y\leq \sigma_0}|L_u(y,v,0)|<\infty$, we have $|L(y,v,u)-L(y,v,0)|<|u|(K+1)$ for all $|u|<\delta$, $y\in M$ and $|v|_y\leq \sigma_0$. Therefore, for all $\lam<\delta/\mathbf{C_0}$,  we have
\begin{equation*}
	\left|L(\xilax(s),\dxilax(s),0)-L(\xilax(s),\dxilax(s),\lam u^\lam(\xilax(s)))\right|< \left|\lam u^\lam(\xilax(s))\right|(K+1)<\lam\mathbf{C_0}(K+1),
\end{equation*}
holds for a.e. $s\in(-\infty, 0)$. This implies that for all $\lam$ small enough, 
\begin{align*}
	&\left|\int_{-\infty}^0 \left[L(\xilax(s),\dxilax(s),0)-L(\xilax(s),\dxilax(s),\lam u^\lam(\xilax(s)))\right]e^{-\lam\int_0^s L_u(\tau)d\tau}\, ds\right|\\
	< &\mathbf{C_0}(K+1)\int_{-\infty}^0 \lam e^{-\lam\int_0^s L_u(\tau)d\tau}\, ds\leqslant \mathbf{C_0}(K+1)/\KK_1,
\end{align*}
where $\KK_1$ is given in \eqref{Lu_upbound}. Meanwhile, using integration by parts, we have
\begin{align*}
\left|\int_{-\infty}^0 e^{-\lam\int_0^s L_u(\tau)d\tau} \frac{d}{ds} u^\lam(\xilax(s)) ds\right|=\left|u^\lam(x)- \int_{-\infty}^0 u^\lam(\xilax(s))\frac{d}{ds}e^{-\lam\int_0^s L_u(\tau)d\tau} ds\right|,
\end{align*}
where the right hand side is bounded by $2\|u^\lam\|_\infty\leq 2\mathbf{C_0}$. As a result,  we obtain the estimate 
\[|\mathrm{I}(\lam)|\leqslant \frac{\mathbf{C_0}(K+1)/\KK_1+2\mathbf{C_0}}{1/(\lam \KK)}=\lam \KK\left[\mathbf{C_0}(K+1)/\KK_1+2\mathbf{C_0}\right],\]
which tends to zero
as $\lam\to 0$, and thus $\int_{TM} L_\GG(y,v)+c(\GG)\,d\tilde{\mu}(y,v)=0$. This completes our proof.
\end{proof}

\begin{Lem}\label{new_gen_1}
If the subsequence  $u^{\lambda_k}$ converges uniformly to a continuous function $u^*$ as $\lambda_k\to 0$, then
\begin{align*}
	\int_{TM} L_u(x,v,0)\, u^*(x) \, d\tmu(x,v)\geqslant 0
\end{align*}
for every Mather measure $\tmu\in\wtM(L_\GG)$.
\end{Lem}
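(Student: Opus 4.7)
The strategy is to start from the fact that $u^\lambda$ is a viscosity subsolution of $\HH^\lambda(x,Du)=c(\GG)$, convert this into an inequality for the Lagrangian $\LL^\lambda(x,v)=L(x,v,\lambda u^\lambda(x))$ via Fenchel--Young, integrate against the Mather measure $\tilde\mu$ using the closedness property, and finally divide by $\lambda$ and send $\lambda_k\to 0$, relying on Lemma \ref{L1_L4}(4) together with the compactness of $\mathrm{supp}\,\tilde\mu$ from Proposition \ref{MM_compactness}.

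More concretely, first I would fix a Mather measure $\tilde\mu\in\wtM(L_\GG)$, so in particular $\int_{TM}L_\GG\,d\tilde\mu=-c(\GG)$ and $\int_{TM}\langle D\varphi,v\rangle_x\,d\tilde\mu=0$ for every $\varphi\in C^1(M)$. By Lemma \ref{lem_breveHL}, $u^\lambda$ is a Lipschitz viscosity (sub)solution of $\HH^\lambda(x,Du)=c(\GG)$, with $\HH^\lambda$ continuous, convex and superlinear in the fibers. Proposition \ref{smooth_sub_approx} then furnishes, for each $\vep>0$, a $C^\infty$ function $w_\vep$ with $\|u^\lambda-w_\vep\|_\infty\leq\vep$ and $\HH^\lambda(x,Dw_\vep(x))\leq c(\GG)+\vep$ for all $x\in M$. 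Fenchel--Young applied pointwise yields
\begin{equation*}
\LL^\lambda(x,v)\geq \langle Dw_\vep(x),v\rangle_x-\HH^\lambda(x,Dw_\vep(x))\geq \langle Dw_\vep(x),v\rangle_x-c(\GG)-\vep.
\end{equation*}
Integrating against $\tilde\mu$ and using that $\tilde\mu$ is closed gives $\int_{TM}\LL^\lambda\,d\tilde\mu\geq -c(\GG)-\vep$; letting $\vep\to 0$ and subtracting $\int L_\GG\,d\tilde\mu=-c(\GG)$ delivers the key inequality
\begin{equation*}
\int_{TM}\bigl[L(x,v,\lambda u^\lambda(x))-L(x,v,0)\bigr]\,d\tilde\mu(x,v)\geq 0.
\end{equation*}

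Dividing by $\lambda>0$, this rewrites as
\begin{equation*}
\int_{TM}\frac{L(x,v,\lambda u^\lambda(x))-L(x,v,0)}{\lambda u^\lambda(x)}\,u^\lambda(x)\,d\tilde\mu(x,v)\geq 0,
\end{equation*}
with the convention that the integrand equals $0$ at points where $u^\lambda(x)=0$ (consistent with the original expression). Now specialize to $\lambda=\lambda_k$ and pass to the limit. By Proposition \ref{MM_compactness} the support of $\tilde\mu$ is a compact subset of $TM$; by uniform convergence $u^{\lambda_k}\to u^*$ and the uniform bound $\|\lambda_k u^{\lambda_k}\|_\infty\to 0$; hence the inner values $\lambda_k u^{\lambda_k}(x)$ lie in a shrinking neighbourhood of $0$ on a fixed compact set of $TM$. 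The locally uniform convergence \eqref{ap_local_uni} of Lemma \ref{L1_L4}(4) then gives
\begin{equation*}
\frac{L(x,v,\lambda_k u^{\lambda_k}(x))-L(x,v,0)}{\lambda_k u^{\lambda_k}(x)}\longrightarrow L_u(x,v,0)
\end{equation*}
uniformly on $\mathrm{supp}\,\tilde\mu$ (restricted to the set where $u^{\lambda_k}(x)\neq 0$; on the complement both sides are handled by the convention above), while $u^{\lambda_k}\to u^*$ uniformly. A bounded convergence argument therefore yields $\int_{TM}L_u(x,v,0)u^*(x)\,d\tilde\mu(x,v)\geq 0$.

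The only delicate point I anticipate is the uniformity of the quotient convergence as a function of $(x,v)$ on $\mathrm{supp}\,\tilde\mu$, since the inner argument $\lambda u^\lambda(x)$ depends on $x$ rather than being an independent parameter; this is precisely why Lemma \ref{L1_L4}(4) is stated as \emph{locally uniform} convergence and why the compactness of $\mathrm{supp}\,\tilde\mu$ from Proposition \ref{MM_compactness} is essential. The smoothing step via Proposition \ref{smooth_sub_approx} is standard, but it is the device that turns the viscosity subsolution inequality into a pointwise inequality that can be integrated against a merely closed measure.
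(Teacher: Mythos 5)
Your proof is correct and follows essentially the same route as the paper: both establish the key inequality $\int_{TM}\bigl[L(x,v,\lambda_k u^{\lambda_k}(x))-L(x,v,0)\bigr]\,d\tilde{\mu}\geq 0$ and then divide by $\lambda_k$, using the locally uniform convergence \eqref{ap_local_uni} together with the compactness of $\operatorname{supp}\tilde{\mu}$ to pass to the limit. The only cosmetic difference is that the paper obtains $\int_{TM}\LL^{\lambda_k}\,d\tilde{\mu}\geq -c(\GG)$ by citing \eqref{mini_pro} for the closed measure $\tilde{\mu}$, whereas you re-derive that inequality from scratch via Proposition \ref{smooth_sub_approx} and Fenchel--Young; this is just an unpacking of the same fact.
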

\begin{proof}
For each $\lambda_k\in (0,\alo)$, we use the Lagrangian $\LL^{\lam_k}(x,v):TM\to \R$ given in Lemma \ref{lem_breveHL}. Recall that the critical value $c(\HH^{\lam_k})=c(\GG)$, then for each  Mather measure $\tmu\in\wtM(L_\GG)$ which is a closed measure, we deduce from \eqref{mini_pro} that
$
	\int_{TM} \LL^{\lam_k}(x,v)+c(\GG)\,d\tilde{\mu}\geqslant 0,
$
and thus
\begin{align*}
	0\leqslant 
	&\int_{TM}L(x,v,\lam_k u^{\lam_k}(x))-L(x,v,0)\,d\tmu+\int_{TM} L(x,v,0)+c(\GG)\,d\tilde{\mu}\\
	=& \int_{TM} L(x,v,\lam_k u^{\lam_k}(x))-L(x,v,0)\,d\tmu+0.
\end{align*}
This indicates 
\begin{align}\label{frac_1}
	\int_{TM}\frac{L(x,v,\lam_k u^{\lam_k}(x))-L(x,v,0)}{\lam_k }\,d\tilde{\mu}\geqslant 0.
\end{align}

On the other hand, from Proposition \ref{MM_compactness} we know that the support of Mather measure $\tilde{\mu}$ is compact, then it follows from the locally uniform convergence \eqref{ap_local_uni} that, for every $\vep>0$, there is a small number $\delta\in(0,\rro)$, such that 
\begin{equation*}
	|L(x,v,u)-L(x,v,0)-uL_u(x,v,0)|< \vep|u|,\quad \text{for all}~ |u|<\delta,~(x,v)\in \textup{supp}\tilde{\mu}. 
\end{equation*}
Meanwhile, there exists a constant $K=K(\delta)>0$ such that for all $k\geq K$, $\|\lam_k u^{\lam_k}\|_\infty<\delta$. This implies that for  $k\geq K$,
 \[|L(x,v,\lam_k u^{\lam_k}(x))-L(x,v,0)-\lam_k u^{\lam_k}(x)L_u(x,v,0)|< \vep|\lam_k u^{\lam_k}(x)|,\quad \text{for all}~ (x,v)\in \textup{supp}\tilde{\mu}.\]
Dividing both sides by $\lam_k$, and then integrating with respect to $\tilde{\mu}$, one gets that for all $k\geq K$,
 \[\left|\int_{TM}\frac{L(x,v,\lam_k u^{\lam_k}(x))-L(x,v,0)}{\lam_k}\,d\tilde{\mu}-\int_{TM}L_u(x,v,0)u^{\lam_k}(x)\,d\tilde{\mu}\right|< \vep\mathbf{C_0}.\]
Since $u^{\lam_k}$ converges uniformly to $u^*$ and the support of $\tilde{\mu}$ is compact, we have
 \begin{align*}
 	\int_{TM}L_u(x,v,0)u^*(x)\,d\tilde{\mu}&=\lim_{k\to\infty}\int_{TM}L_u(x,v,0)u^{\lam_k}(x)\,d\tilde{\mu}\\
 	&\geq\liminf_{k\to\infty}\int_{TM}\frac{L(x,v,\lam_k u^{\lam_k}(x))-L(x,v,0)}{\lam_k}\,d\tilde{\mu}-\vep\mathbf{C_0}\\
 	&\geq -\vep\mathbf{C_0},
 \end{align*}
where the last inequality comes from  \eqref{frac_1}. Finally, by  sending $\vep\to 0^+$ we conclude that
\begin{align*}
	\int_{TM} L_u(x,v,0)\, u^*(x) \, d\tmu\geqslant 0.
\end{align*} 
\end{proof}

Next, we will prove a lemma which is a generalization of \cite[Lemma 3.7]{DFIZ2016}. It will play a key role in the proof of our main results.
\begin{Lem}\label{new_gen_2}
Let $w$ be a viscosity subsolution	of \eqref{eq_G}. Then for each $\lam\in(0,\alo)$ and $x\in M$,  
\begin{align}
u^\lam(x)	\geqslant & w(x)-\frac{ \int_{TM}   L_u(y,v,0)\, w(y)\, d \tilde{\mu}_x^\lam(y,v)}{\int_{TM} L_u(y,v,0)\,d\tilde{\mu}_x^\lam(y,v)}+R(x,\lam).
\end{align}
Here, the remainder term $R(x,\lam)\longrightarrow 0$ as $\lam $ tends to $0$.
\end{Lem}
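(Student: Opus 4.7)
The plan is to combine the exact calibration identity for $u^\lam$ along $\xilax$ (Lemma \ref{existence_minicurve}) with a Fenchel-type inequality for a smooth $\vep$-approximation $w_\vep$ of the subsolution $w$, then exploit the integrating factor $E(s):=\exp\bigl(-\lam\int_0^s L_u(\xilax(\tau),\dxilax(\tau),0)\,d\tau\bigr)$ already hidden in the definition \eqref{def_pro_meas} of $\tmu^\lam_x$ to convert a pointwise differential inequality into the desired integral comparison. Throughout, write $L_u(s)$ for $L_u(\xilax(s),\dxilax(s),0)$ and $Q(x,\lam)$ for the quotient in the statement.

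First I would fix $\lam\in(0,\alo)$ and $x\in M$ and take the Lipschitz curve $\xilax$ given by Lemma \ref{existence_minicurve}. For every $\vep>0$, Proposition \ref{smooth_sub_approx} yields $w_\vep\in C^\infty(M)$ with $\|w-w_\vep\|_\infty\leq\vep$ and $\GG(x,Dw_\vep(x))\leq c(\GG)+\vep$. Along $\xilax$, Fenchel's inequality gives $\frac{\dd}{\dd s}w_\vep(\xilax(s))\leq L_\GG(\xilax(s),\dxilax(s))+c(\GG)+\vep$ a.e., while \eqref{du_lam_ae} is an equality for $u^\lam$. Subtracting and invoking the locally uniform expansion of Lemma \ref{L1_L4}(4) (valid because $(\xilax,\dxilax)$ stays in the compact set $\{|v|_y\leq\sigma_0\}$ and $\|\lam u^\lam\|_\infty\leq\lam\mathbf{C_0}\to 0$), I get, for every $\eta>0$ and all sufficiently small $\lam$,
$$
\Psi_\vep'(s)\;\geq\;\lam L_u(s)\,u^\lam(\xilax(s))-\eta\lam\mathbf{C_0}-\vep\qquad\text{a.e.~}s<0,
$$
where $\Psi_\vep(s):=u^\lam(\xilax(s))-w_\vep(\xilax(s))$.

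Next I would multiply by $E(s)$; since $E'(s)=-\lam L_u(s)E(s)$ the product rule absorbs the $\Psi_\vep$ term, giving
$$
\tfrac{\dd}{\dd s}\bigl[\Psi_\vep(s)E(s)\bigr]\geq\lam L_u(s)w_\vep(\xilax(s))E(s)-(\eta\lam\mathbf{C_0}+\vep)E(s)\quad\text{a.e.}
$$
Integrating on $[-T,0]$ and letting $T\to\infty$ kills the boundary term at $-\infty$ (because $L_u(s)\leq-\KK_1<0$ forces $E(-T)\to 0$ while $\Psi_\vep$ stays bounded), and the identity $\lam L_u(s)E(s)=-E'(s)$ allows a reverse integration by parts, yielding
$$
u^\lam(x)\;\geq\;\int_{-\infty}^{0}\tfrac{\dd}{\dd s}w_\vep(\xilax(s))\,E(s)\,ds-(\eta\lam\mathbf{C_0}+\vep)\,I(\lam),\qquad I(\lam):=\int_{-\infty}^{0}E(s)\,ds\leq\tfrac{1}{\lam\KK_1}.
$$
A separate integration by parts, together with the elementary identity $\int_{-\infty}^{0}L_u(s)E(s)\,ds=-1/\lam$, expresses $Q(x,\lam)$ as $w(x)-\int_{-\infty}^{0}\frac{\dd}{\dd s}w(\xilax(s))E(s)\,ds$; the same formula holds with $w_\vep$ in place of $w$. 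Replacing $w_\vep$ by $w$ costs at most $2\vep$ (because $\|w-w_\vep\|_\infty\leq\vep$ and $\int|L_u|E\,ds=1/\lam$), so
$$
u^\lam(x)\;\geq\;w(x)-Q(x,\lam)-2\vep-\tfrac{\eta\mathbf{C_0}}{\KK_1}-\tfrac{\vep}{\lam\KK_1}.
$$
Choosing $\vep=\lam^2$ kills the $\vep/(\lam\KK_1)$ term, and letting $\eta=\eta(\lam)\to 0$ slowly so that the locally uniform convergence of Lemma \ref{L1_L4}(4) still applies at scale $\lam\mathbf{C_0}$ yields the required $R(x,\lam)\to 0$ (in fact uniformly in $x$).

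The main obstacle is coordinating these three small parameters. The subsolution approximation error $\vep$ is amplified to $\vep/\lam$ after integration against $E$ over the half-line, forcing $\vep=o(\lam)$; simultaneously, the modulus $\eta$ from the locally uniform convergence must be selected as a function of $\lam$ compatible with the threshold $\lam\mathbf{C_0}<\delta(\eta)$ inherited from Lemma \ref{L1_L4}(4). Everything else is a routine manipulation of the integrating factor, which is essentially forced on us by the definition \eqref{def_pro_meas}.
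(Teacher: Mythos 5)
Your proposal is correct and follows essentially the same route as the paper: the calibration identity \eqref{du_lam_ae}, a smooth approximation of $w$ via Proposition \ref{smooth_sub_approx} combined with Fenchel's inequality, the integrating factor $e^{-\lam\int_0^s L_u\,d\tau}$, integration by parts, and control of the second-order Taylor remainder of $L$ in $u$ (the paper's $\Delta^\lam_x(s)$, handled in Lemma \ref{lem_Del_x}). The only difference is bookkeeping at the end: the paper sends the smoothing parameter to $0$ for each \emph{fixed} $\lam$ (its contribution is $O(\delta+\delta/(\lam\KK_1))$, harmless for fixed $\lam$), so the three-parameter coordination you describe — and the choice $\vep=\lam^2$ — is unnecessary, though your version also works.
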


\begin{proof}
Let $\lam\in(0,\alo)$.
From \eqref{du_lam_ae} of Lemma \ref{existence_minicurve} we see that for  a.e.  $ s<0$,
\begin{align}
	\frac{d}{d s}u^\lam\big(\xilax(s)\big)=&L(\xilax(s),\dxilax(s),\lam u^\lam(\xilax(s)))+c(\GG)\nonumber\\
	=& L(\xilax(s),\dxilax(s), 0)+\lam  u^\lam(\xilax(s))\,L_u(\xilax(s),\dxilax(s), 0)+\Delta^\lam_x(s)+c(\GG), \label{du_lam_1}
	\end{align}
where 
\begin{align}\label{Del_s}
	\Delta^\lam_x(s)=L(\xilax(s),\dxilax(s),\lam u^\lam(\xilax(s)))-L(\xilax(s),\dxilax(s), 0)-\lam  u^\lam(\xilax(s))\,L_u(\xilax(s),\dxilax(s), 0).
\end{align}
By Lemma \ref{lem_Del_x} which will be proved later, the function $s\mapsto \Delta^\lam_x(s)$ is Lebesgue measurable and bounded. 
For simplicity of notation, we write  $L_u(s)$ instead of $L_u(\xilax(s),$ $\dxilax(s),$ $0)$ in the following proof.

Let  $w(x)$ be any  viscosity subsolution of \eqref{eq_G}. According to Proposition \ref{smooth_sub_approx},
for each $\delta>0$,  there exists a function $w_\delta \in C^\infty(M)$	 such that $\|w_\delta-w\|_\infty\leqslant \delta$,    $\|Dw_\delta\|_\infty\leqslant\textup{Lip}(w)+1$, and 
\begin{align}\label{approxi_G}
	H(x, Dw_\delta(x),0)=\GG(x, Dw_\delta(x))\leqslant c(\GG)+\delta,\quad \textup{for every~} x\in M.
\end{align}
Using Fenchel's inequality,   we deduce from inequalities \eqref{du_lam_1} and \eqref{approxi_G} that
 \begin{align*}
 	\frac{d}{d s}u^\lam(\xilax(s)) \geqslant &\langle Dw_\delta(\xilax(s)), \dxilax(s)\rangle -\delta +\lam  u^\lam(\xilax(s))L_u(s)+\Delta^\lam_x(s)\\
 = & \frac{d}{d s} w_\delta(\xilax(s)) +\lam  u^\lam(\xilax(s))L_u(s)+\Delta^\lam_x(s)-\delta,\quad \textup{for a.e.~}   s<0.
 \end{align*}
Multiplying both sides by $e^{-\lambda\int_0^s L_u(\tau)\, d\tau}$ and rearranging the terms, one gets
\begin{align*}
\frac{d}{d s}\left(u^\lam(\xilax(s))\, e^{-\lam\int_0^s L_u(\tau)\, d\tau} \right)	\geqslant \left( \frac{d}{d s} w_\delta(\xilax(s)) +\Delta^\lam_x(s)-\delta\right)e^{-\lam\int_0^s L_u(\tau)\, d\tau}, ~\textup{for a.e.~}   s<0.
\end{align*}
Integrating this inequality with respect to $s$ over the interval $(-\infty,0]$, we obtain
\begin{align}\label{gdht}
u^\lam(x)	\geqslant &  w_\delta(x) -\int_{-\infty}^0   w_\delta(\xilax(s)) \frac{d}{d s} \Big(e^{-\lam\int_0^s L_u(\tau)\, d\tau}\Big)\,ds+\int_{-\infty}^0(\Delta^\lam_x(s) -\delta) e^{-\lam\int_0^s L_u(\tau)\, d\tau}\, ds. 
\end{align}
Here, we have used integration by parts.  Observe that
\begin{align*}
	\left|\int_{-\infty}^0  \big( w_\delta(\xilax(s))-w(\xilax(s))\big) \frac{d}{d s}\Big(e^{-\lam\int_0^s L_u(\tau)\, d\tau}\Big)\,ds\right|\leqslant \delta,
\end{align*}
then we derive from \eqref{gdht} that
\begin{align*}
u^\lam(x)	\geqslant & w_\delta(x) -\int_{-\infty}^0   w(\xilax(s))\frac{d}{d s} \Big(e^{-\lam\int_0^s L_u(\tau)\, d\tau}\Big)ds-\delta+\int_{-\infty}^0(\Delta^\lam_x(s) -\delta)e^{-\lam\int_0^s L_u(\tau)\, d\tau}\, ds.
\end{align*}
Sending  $\delta\to 0$,  we infer 
\begin{align}
u^\lam(x)	\geqslant & w(x)-\int_{-\infty}^0   w(\xilax(s))\frac{d}{d s} \Big(e^{-\lam\int_0^s L_u(\tau)\, d\tau}\Big)\,ds +\underbrace{\int_{-\infty}^0\Delta^\lam_x(s) e^{-\lam\int_0^s L_u(\tau)\, d\tau}\, ds}_{R(x,\lam)}\nonumber\\
=& w(x)+\lam \int^0_{-\infty}    w(\xilax(s))L_u(s)e^{-\lam\int_0^s L_u(\tau)\, d\tau} \,ds+R(x,\lam)\nonumber\\
=& w(x)+\lam \left(\int_{-\infty}^0  e^{-\lam\int_0^s L_u(\tau)\,d\tau}\,ds \right)  \cdot \int_{TM}   w(y) L_u(y,v,0) \,d\tilde{\mu}_x^\lam(y,v)+R(x,\lam).\nonumber
\end{align}

Next, we will show that \[\lam \int_{-\infty}^0   e^{-\lam\int_0^s L_u(\tau)\, d\tau}\, ds=-1/\int_{TM} L_u(y,v,0)\,d\tilde{\mu}_x^\lam(y,v).\] Indeed, according to the definition of $\tilde{\mu}_x^\lam $,
\begin{align*}
	\int_{TM} L_u(y,v,0)\,d\tilde{\mu}_x^\lam(y,v)=\frac{\int_{-\infty}^0    L_u(s) e^{-\lam\int_0^s L_u(\tau)\ d\tau}\, ds}{ \int_{-\infty}^0   e^{-\lam\int_0^s L_u(\tau)\ d\tau}\, ds}=&\frac{ \int_{-\infty}^0 \frac{d}{ds}e^{-\lam\int_0^s L_u(\tau)\, d\tau}\,ds}{-\lam \int_{-\infty}^0   e^{-\lam\int_0^s L_u(\tau)\, d\tau}\, ds}\\
	=&\frac{ 1}{-\lam \int_{-\infty}^0   e^{-\lam\int_0^s L_u(\tau)\, d\tau}\, ds}.
\end{align*}

Now, it only remains to show that $R(x,\lam)\to 0$ as $\lam\to 0$. In fact, for every $\vep>0$, by Lemma \ref{lem_Del_x}, there exists a constant $\lam_\vep>0$ small enough such that	 for each $\lam\in (0,\lam_\vep)$,\begin{equation}
		|\Delta^\lam_x(s)|\leq \vep \lam, \quad \textup{a.e.~} s\in(-\infty,0).
	\end{equation} 
	 This leads to
	 \begin{equation}
	 	|R(x,\lam)|\leqslant \vep \int_{-\infty}^0 \lam e^{-\lam\int_0^s L_u(\tau)\, d\tau}\, ds\leqslant\frac{\vep}{\KK_1},\quad \textup{for all~}\lam\in (0,\lam_\vep),
	 \end{equation} 
where the last inequality follows from \eqref{Lu_upbound}.	Therefore, 
by the arbitrariness of $\vep$, we have $\lim_{\lam\to 0}R(x,\lam)$ $=0 $.
This finishes the proof.
\end{proof}

To complete the proof of Lemma \ref{new_gen_2}, it remains to show the following result:
\begin{Lem}\label{lem_Del_x}
	For each $\lam\in(0,\alo)$ and $x\in M$, let $\Delta_x^\lam(s):(-\infty,0]\to\R$ be the function defined in \eqref{Del_s}. Then $\Delta_x^\lam(s)$ is a bounded Lebesgue measurable function. Moreover, for each $\vep>0$, there exists a small number $\lam_\vep>0$, independent of $x$, such that	for each $\lam\in (0,\lam_\vep)$,
	\begin{equation}
		|\Delta^\lam_x(s)|\leq \vep \lam, \quad \textup{a.e.~} s\in(-\infty,0).
	\end{equation} 
\end{Lem}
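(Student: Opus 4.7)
The plan is to first establish measurability and boundedness of $\Delta_x^\lam$, and then derive the quantitative estimate $|\Delta_x^\lam(s)| \leq \vep \lam$ as a direct consequence of the locally uniform convergence \eqref{ap_local_uni} from Lemma \ref{L1_L4}(4), applied to a compact subset of $TM$ that does not depend on $\lam$ or $x$.

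First I would identify the relevant compact set. By Lemma \ref{existence_minicurve}, the uniform bound $\|\dxilax\|_{L^\infty} \leq \sigma_0$ holds, with $\sigma_0$ independent of $\lam$ and $x$. Hence for a.e.\ $s\leq 0$ the pair $(\xilax(s), \dxilax(s))$ lies in the fixed compact set $B := \{(y,v)\in TM : |v|_y \leq \sigma_0\}$. Since $\xilax$ is Lipschitz, $s \mapsto \xilax(s)$ is continuous and $s \mapsto \dxilax(s)$ is Lebesgue measurable; combined with the continuity of $L$ on $TM\times(-\rro,+\infty)$, the continuity of $L_u(\cdot,\cdot,0)$ on $TM$ (Lemma \ref{L1_L4}(3)), and the continuity of $u^\lam$, each of the three summands in the definition \eqref{Del_s} of $\Delta_x^\lam(s)$ is measurable in $s$. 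Boundedness is immediate: the relevant arguments live in $B$ and $\|\lam u^\lam\|_\infty \leq \lam \mathbf{C_0} < \rro$ by Proposition \ref{uni_bound} together with \eqref{fsgw}, so the continuous functions $L$ and $L_u(\cdot,\cdot,0)$ are evaluated on a fixed compact set.

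For the quantitative part, I would apply \eqref{ap_local_uni} to the compact set $B$: for every $\eta>0$ there exists $\delta=\delta(\eta)\in(0,\rro)$ such that
\[
\bigl|L(y,v,u)-L(y,v,0)-u\, L_u(y,v,0)\bigr| \leq \eta\,|u|, \quad \forall\,(y,v)\in B,\ 0<|u|<\delta.
\]
Given $\vep>0$, set $\eta:=\vep/\mathbf{C_0}$ and define $\lam_\vep:=\delta(\eta)/\mathbf{C_0}$. For any $\lam\in(0,\lam_\vep)$ the quantity $u:=\lam u^\lam(\xilax(s))$ satisfies $|u|\leq \lam\mathbf{C_0}<\delta$, so applying the inequality above with $(y,v,u)=(\xilax(s),\dxilax(s),\lam u^\lam(\xilax(s)))$ gives
\[
|\Delta_x^\lam(s)|\leq \eta\,|\lam u^\lam(\xilax(s))|\leq \eta\,\lam\mathbf{C_0} = \vep\,\lam, \quad \text{a.e. } s\in(-\infty,0).
\]
Since $B$, $\mathbf{C_0}$, $\sigma_0$, and $\rro$ are all independent of $x$ and $\lam$, so is the threshold $\lam_\vep$, as required.

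No serious obstacle is expected. The only delicate point is to ensure that the compact set on which \eqref{ap_local_uni} is invoked can be chosen uniformly in $\lam$ and $x$; this is guaranteed by the equi-Lipschitzness of the family $\{\xilax\}$ from Lemma \ref{existence_minicurve} and the equi-boundedness of $\{\lam u^\lam\}$ from Proposition \ref{uni_bound}.
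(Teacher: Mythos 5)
Your proposal is correct and follows essentially the same route as the paper's proof: both establish measurability and boundedness from the continuity of $f(y,v,u)=L(y,v,u)-L(y,v,0)-uL_u(y,v,0)$ composed with the measurable curve data, and both obtain the estimate $|\Delta_x^\lam(s)|\leq\vep\lam$ by applying \eqref{ap_local_uni} on the fixed compact set $\{|v|_y\leq\sigma_0\}$ with the choices $\eta=\vep/\mathbf{C_0}$ and $\lam_\vep=\delta(\eta)/\mathbf{C_0}$. No discrepancies.
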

\begin{proof}
If we set $f(y,v,u):=L(y,v,u)-L(y,v,0)-uL_u(y,v,0)$, then Lemma \ref{L1_L4} implies that $f$ is continuous on $TM\times(-\rro,+\infty)$, and hence $\Delta^\lam_x(s)=f\circ \big(\xilax(s), \dxilax(s),\lam u^\lam(\xilax(s))\big)$ is Lebesgue measurable and bounded. 

 By Lemma \ref{existence_minicurve} and Proposition \ref{uni_bound}, $\|\dxilax\|_{L^\infty}\leqslant \sigma_0$ and    $\|u^\lam\|_\infty\leqslant \mathbf{C_0}$, 
uniformly in  $\lam\in (0,\alo)$ and $x\in M$.  For every $\vep>0$, we set $\vep_0=\vep/\mathbf{C_0}$, then it follows from the locally uniform convergence  \eqref{ap_local_uni} that there exists a small number
$\kappa=\kappa(\vep, \sigma_0)\in (0,\rro)$ such that
\begin{equation*}
	\Big|\frac{L(y,v,u)-L(y,v,0)}{u}-L_u(y,v,0)\Big|\leqslant\vep_0,\quad \text{uniformly in~} y\in M, |v|_y\leqslant \sigma_0, 0<|u|\leqslant \kappa.
\end{equation*} 
Now we have
\begin{equation}\label{eps_0}
	|f(y,v,u)|\leqslant \vep_0|u|, \quad \text{uniformly in~} y\in M, |v|_y\leqslant \sigma_0, |u|\leqslant \kappa.
\end{equation} 
Therefore, by setting $\lam_\vep:=\kappa/\mathbf{C_0}$, inequality \eqref{eps_0} indicates that for every $0<\lam<\lam_\vep$, 
\begin{equation*}
		|\Delta^\lam_x(s)|=|f\big(\xilax(s), \dxilax(s),\lam u^\lam(\xilax(s))\big)|\leq \vep_0 \lam|u^\lam(\xilax(s))|\leq \vep\lam, \quad \textup{a.e.~} s\in(-\infty,0).
	\end{equation*}
\end{proof}

\subsection{Convergence}

Inspired by Lemma \ref{new_gen_1}, we denote by $\FF$  the set of all viscosity subsolutions $w$ of \eqref{eq_G} satisfying
\begin{align}\label{set_FF}
	\int_{TM} L_u(x,v,0)w(x)\,d\tilde{\mu}(x,v)\geqslant 0,\quad \textup{for all}~\tilde{\mu}\in\wtM (L_\GG).
\end{align}
We claim that the family $\FF$ is uniformly bounded above, i.e.
\begin{equation*}
	\sup\{w(x)~:~x\in M, w\in\FF  \}<+\infty.
\end{equation*}
 Indeed, for each $w\in\FF$,  it follows from  \eqref{set_FF} and $L_u(x,v,0)<0$ that $\min_{x\in M} w(x)\leqslant 0$. Besides, by Proposition \ref{sublip} all viscosity subsolutions of \eqref{eq_G} are equi-Lipschitz continuous with $\rho_c$ as a Lipschitz bound, so $\max_{x\in M} w(x)\leq \max_{x\in M} w(x)-\min_{x\in M} w(x) \leq \rho_c \cdot \textup{diam} (M)$. 
 
 This allows us to  define a function
$u^0:M\to \R$ by 
\begin{equation}\label{first_charac}
	u^0(x):=\sup_{w\in\FF} w(x).
\end{equation}
Obviously, $u^0$ is Lipschitz continuous on $M$.

Now we are ready to prove our convergence result:
\begin{proof}[Proof of Theorem \ref{mainresult1}:]
As the supremum of a family of viscosity subsolutions, the function $u^0(x)$,  defined in \eqref{first_charac},  is itself a viscosity subsolution of \eqref{eq_G}. We will show $\lim_{\lam\to 0}u^\lam=u^0$. 

Indeed, 
 by Proposition \ref{uni_bound}, the family $\{u^\lam\}_{\lam\in(0,\alo)}$ is equi-Lipschitz continuous and equi-bounded. Hence, by the Ascoli-Arzel\`a theorem it only remains to prove that any convergent subsequence  has the same limit $u^0$.

If $\{u^{\lam_k}\}_k$ is a subsequence converging uniformly to some continuous function $u:M\to\R $ as $\lam_k$ tends to zero, it is clear from Lemma \ref{new_gen_1} that this limit function $u\in \FF$, so  $u\leq u^0$.
Now, we only need to show $u\geq u^0$. 
For every $x\in M$,  it follows from Lemma \ref{new_gen_2} that for each $w\in \FF$,
\begin{align}\label{geq_1}
u^{\lam_k}(x)	\geqslant & w(x)-\frac{ \int_{TM}  L_u(y,v,0)\, w(y)\, d \tilde{\mu}_x^{\lam_k}(y,v)}{\int_{TM} L_u(y,v,0)\,d\tilde{\mu}_x^{\lam_k}(y,v)}+R(x,\lam_k).
\end{align}
By Proposition \ref{weak_convergence}, extracting a subsequence if necessary, we can assume that $\tilde{\mu}_x^{\lam_k}$ converges weakly to a Mather measure $\tilde{\mu}$. Therefore, sending $\lam_k\to 0$, inequality \eqref{geq_1} leads to 
\begin{align*}
u(x)	\geqslant & w(x)-\frac{ \int_{TM}  L_u(y,v,0)\, w(y)\, d \tilde{\mu}(y,v)}{\int_{TM} L_u(y,v,0)\,d\tilde{\mu}(y,v)}\geqslant w(x),
\end{align*}
where the last inequality follows from $L_u(y,v,0)<0$ and $w\in\FF$. Therefore,  $u(x)\geqslant \sup_{w\in\FF} w(x)=u^0(x)$ for every $x\in M$. This completes the proof.
\end{proof}

\subsection{Characterization of the limit}
\begin{proof}[Proof of Theorem \ref{mainresult2}:]
By what we have shown above, we already obtain the first representation formula \[u^0(x)=\sup_{w\in\FF} w(x).\]  It remains to prove the second one that is characterized in terms of Peierls barrier. 
  
By Proposition \ref{proper_criti_sol},  $u^0(x)\leqslant u^0(y)+h(y,x)$ for all $x, y\in M$. Also, we recall that each Mather measure has compact support (see Proposition \ref{MM_compactness}). Then, multiplying both sides by $L_u(y,v,0)$ which is negative, and integrating  with respect to $y$, we derive 
\begin{equation*}
\begin{split}
	u^0(x)\int_{TM}L_u(y,v,0)d\tilde{\mu}(y,v)&\geqslant \int_{TM}L_u(y,v,0)u^0(y)d\tilde{\mu}(y,v)+ \int_{TM}L_u(y,v,0)h(y,x)d\tilde{\mu}(y,v)\\
	&\geq \int_{TM}L_u(y,v,0)h(y,x)d\tilde{\mu}(y,v),		
\end{split}
\end{equation*}
for every Mather measure $\tilde{\mu}\in\wtM(L_\GG) $, and the last inequality follows from Lemma \ref{new_gen_1}.
Thus,
\begin{align*}
	u^0(x)\leqslant \frac{\int_{TM}L_u(y,v,0)h(y,x)d\tilde{\mu}(y,v)}{\int_{TM}L_u(y,v,0)d\tilde{\mu}(y,v)},\quad \textup{for every~} \tilde{\mu}\in\wtM(L_\GG).
\end{align*}
As a result,
\begin{equation}\label{REP_1}
		u^0(x)\leq\inf_{\tilde{\mu}\in \wtM(L_\GG)}\frac{\int_{TM}L_u(y,v,0)h(y,x)d\tilde{\mu}(y,v)}{\int_{TM}L_u(y,v,0)d\tilde{\mu}(y,v)}.
	\end{equation}

To prove the opposite inequality, we first consider $x\in \cA$ with $\cA$ the projected Aubry set of $L_\GG$. By item (2) of Proposition \ref{proper_criti_sol}, the function $y\mapsto -h(y,x)$ is a viscosity subsolution of \eqref{eq_G}. This in turn gives, thanks to Lemma \ref{new_gen_2},
\begin{align*}
u^\lam(x)	\geqslant & -h(x,x)+\frac{ \int_{TM}   L_u(y,v,0) h(y,x)\, d \tilde{\mu}_x^\lam(y,v)}{\int_{TM} L_u(y,v,0)\,d\tilde{\mu}_x^\lam(y,v)}+R(x,\lam)\\
=&\frac{ \int_{TM}   L_u(y,v,0) h(y,x)\, d \tilde{\mu}_x^\lam(y,v)}{\int_{TM} L_u(y,v,0)\,d\tilde{\mu}_x^\lam(y,v)}+R(x,\lam).
\end{align*}
The last equality follows from the fact that $h(x,x)=0$ for $x\in\cA$.
By Proposition \ref{weak_convergence}, extracting a subsequence if necessary, we can assume that $\tilde{\mu}_x^{\lam}$ converges weakly to a Mather measure $\tilde{\nu}$. 
Now, sending $\lam\to 0$ we infer that
\begin{align*}
u^0(x)	\geqslant \frac{ \int_{TM}   L_u(y,v,0) h(y,x)\, d \tilde{\nu}(y,v)}{\int_{TM} L_u(y,v,0)\,d\tilde{\nu}(y,v)},
\end{align*}
which also gives rise to
\begin{align}\label{REP_2}
u^0(x)\geq\inf_{\tilde{\mu}\in \wtM(L_\GG)}\frac{\int_{TM}L_u(y,v,0)h(y,x)d\tilde{\mu}(y,v)}{\int_{TM}L_u(y,v,0)d\tilde{\mu}(y,v)},
\end{align}
for all $x\in \cA$. Note that the function on the right hand side of \eqref{REP_2} is a viscosity subsolution of \eqref{eq_G}, then according to item (4) of Proposition \ref{proper_criti_sol}, inequality \eqref{REP_2} holds for all $x\in M$. This completes the proof.
\end{proof}

\medskip

Finally, as a concluding remark, we return to the system studied in 
 Proposition \ref{uni_const}. Actually, in the case where constant functions are critical subsolutions, the limit solution $u^0$ has a simpler representation formula: 

\begin{Pro}
Let $H$  satisfy  assumptions {\bf(H1)}, {\bf(H2*)} and {\bf(H3)}. Suppose that constant functions are subsolutions of equation \eqref{eq_G}, or equivalently $\GG(x, 0)\leq c(\GG)$ on $M$. Then
\begin{equation}
	u^0(x)=\min_{y\in\cA} h(y,x),
\end{equation}
where $h(y,x)$ and $\cA$ are, respectively, the Peierls barrier and the projected Aubry set of  $L_\GG$.
\end{Pro}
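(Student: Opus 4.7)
The plan is to prove $u^0 = V$, where $V(x) := \min_{y \in \mathcal{A}} h(y,x)$ (the minimum being attained by compactness of $\mathcal{A}$ and continuity of $h(\cdot,x)$, Proposition \ref{proper_criti_sol}(1)), by establishing the two inequalities $u^0 \leq V$ and $u^0 \geq V$ on $M$ separately. Two preliminary facts flowing from the hypothesis that constants are critical subsolutions will be used throughout: Proposition \ref{proper_criti_sol}(3) applied with $v\equiv 0$ gives $h \geq 0$ on $M\times M$; and Proposition \ref{uni_const} guarantees both $u^\lambda \geq 0$ and the uniform convergence $u^\lambda \to u^0$, so that $u^0 \geq 0$ and $u^0$ is a critical viscosity solution of \eqref{eq_G}.

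For the bound $u^0 \leq V$, fix $y\in\mathcal{A}$ and set $w_y := h(y,\cdot)$. By Proposition \ref{proper_criti_sol}(2) $w_y$ is a critical viscosity solution, and $w_y \geq 0$ by the preliminary fact. The key observation is that non-negativity together with assumption \textbf{(H3)} upgrades $w_y$ into a supersolution of \eqref{eq_H_lambda}: at any touching point $\psi \leq w_y$ with $\psi(x_0) = w_y(x_0) \geq 0$, monotonicity in $u$ yields $H(x_0, D\psi, \lambda\psi(x_0)) \geq H(x_0, D\psi, 0) = \GG(x_0, D\psi) \geq c(\GG)$. The comparison principle (Lemma \ref{The_comp_prin}) then forces $u^\lambda \leq w_y$; letting $\lambda\to 0$ and minimizing over $y \in \mathcal{A}$ produces $u^0 \leq V$.

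For the reverse inequality $u^0 \geq V$, first show $u^0 \equiv 0$ on $\mathcal{A}$: if $x\in\mathcal{A}$ then $V(x) \leq h(x,x) = 0$, which combined with $V \geq 0$ forces $V(x) = 0$, so $0 \leq u^0(x) \leq V(x) = 0$. Next introduce
\[g(x) := \inf_{y \in \mathcal{A}}\bigl[u^0(y) + h(y,x)\bigr] = \inf_{y \in \mathcal{A}} h(y,x) = V(x).\]
We claim $g$ is a critical viscosity solution. As the infimum of the critical solutions $u^0(y)+h(y,\cdot)$, the function $g$ is automatically a critical supersolution. For the subsolution property, apply Proposition \ref{proper_criti_sol}(5) to each $h(y,\cdot)$ to get $h(y,x) = \min_{z} [h(y,z)+h_t(z,x)]$, and interchange the two minimizations over $y\in\mathcal{A}$ and $z\in M$ to obtain the dynamic-programming identity $g(x) = \min_{z\in M} [g(z)+h_t(z,x)]$, valid for every $t>0$; this identity is equivalent to $g$ being dominated by the critical action, and hence to $g$ being a critical subsolution. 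Once $g$ is known to be a critical solution agreeing with $u^0$ on $\mathcal{A}$ (both vanish there), Proposition \ref{proper_criti_sol}(4) applied twice---first with $u^0$ as subsolution and $g$ as supersolution, then with the roles reversed---forces $u^0 = g = V$ on all of $M$.

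The main technical point is translating the dynamic-programming identity $g(x)=\min_z[g(z)+h_t(z,x)]$ into the viscosity subsolution inequality $\GG(x,Dg) \leq c(\GG)$. This is the standard equivalence in the Aubry--Mather theory of continuous, convex, superlinear Hamiltonians: for every Lipschitz curve $\xi:[0,t]\to M$, the identity forces $g(\xi(t))-g(\xi(0)) \leq \int_0^t L_{\GG}(\xi(s),\dot\xi(s))+c(\GG)\,ds$, and such domination by the action is known to characterize viscosity subsolutions in this non-smooth setting; everything else in the argument reduces to a direct application of facts already proved in Propositions \ref{uni_const} and \ref{proper_criti_sol}.
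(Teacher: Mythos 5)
Your proposal is correct and follows essentially the same route as the paper: both arguments rest on $h\geq 0$ (hence $V\geq 0$), the fact that $V=\min_{y\in\cA}h(y,\cdot)$ is a nonnegative critical solution and therefore a supersolution of \eqref{eq_H_lambda} by {\bf(H3)}, the comparison principle to get $0\leq u^\lam\leq V$ and hence $u^0=V=0$ on $\cA$, and finally item (4) of Proposition \ref{proper_criti_sol} to propagate the equality to all of $M$. The only differences are cosmetic: you compare $u^\lam$ with each $h(y,\cdot)$ separately instead of with $V$ directly, and you spell out via the dynamic-programming identity why $V$ is a critical solution where the paper simply invokes that a minimum of equi-Lipschitz solutions is a solution.
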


\begin{proof}
Let us set
$
		\widehat{u}(x):=\min_{y\in\mathcal{A}} h(y,x).
$
Since constant functions are subsolutions of \eqref{eq_G},  by item (3) of Proposition \ref{proper_criti_sol} we infer that $h(y,x)\geqslant 0$ for all $x$, $y$, and thus $\widehat{u}\geq 0$. 
In view of item (2) of Proposition \ref{proper_criti_sol}, we see  
that $\widehat{u}$ is a minimum of a family of equi-Lipschitz solutions to \eqref{eq_G}. Then it is well known that
$\widehat{u}$  is itself a viscosity solution of \eqref{eq_G}. 

Next, observe that $\widehat{u}\geq 0$ 
is a  supersolution of \eqref{eq_H_lambda}. Then, by the comparison principle and Proposition \ref{uni_const} we get
$
	0\leq u^\lambda\leq \widehat{u}.
$
In particular, since $\widehat{u}(x)=0$ for $x\in \mathcal{A}$, we have
\begin{equation*}
	u^\lambda(x)=0,\quad \forall~x\in \mathcal{A}.
\end{equation*}
This implies that the limit solution  $u^0(x)=\lim_{\lam\to 0^+}u^\lam(x)=0$ for all $x\in\mathcal{A}$. Therefore, item (4) of Proposition \ref{proper_criti_sol} ensures  that $u^0=\widehat u$ on $M$. 
\end{proof}

\appendix
\section{}
Here, we provide an equivalent description for assumption {\bf(H2)} provided that {\bf(H1)} and {\bf(H3)} hold.
\begin{Pro}\label{equiv_def_H2}
	If  $H\in C(T^*M\times\R) $ satisfies the convexity assumption {\bf(H1)} and the monotonicity assumption {\bf(H3)}, then the following properties are equivalent:
	\begin{enumerate}[(1)]
		\item There is a constant $\rro>0$ such that  $\lim_{|p|_x\to \infty}$ $H(x, p, -\rro)$ $=+\infty$, uniformly in  $x\in M.$
	\item 	$ \lim_{|p|_x\to \infty} H(x, p, 0)=+\infty$, uniformly in  $x\in M.$
	\end{enumerate}
\end{Pro}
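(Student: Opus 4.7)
The plan is to prove the two implications separately. The direction (1)$\Rightarrow$(2) is immediate from the monotonicity assumption {\bf(H3)}: since $0 > -\rro$, we have $H(x, p, 0) \geq H(x, p, -\rro)$ pointwise on $T^*M$, so uniform coercivity at $u=-\rro$ transfers immediately to uniform coercivity at $u=0$.

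The interesting direction is (2)$\Rightarrow$(1), where monotonicity runs \emph{against} us (values at $-\rro$ are smaller than at $0$), and the convexity assumption {\bf(H1)} must do the compensating work. I would begin with three preparatory steps. First, using continuity of $H$ together with compactness of $M$, fix a constant $C_1 > 0$ such that $|H(x, 0, u)| \leq C_1$ for all $x \in M$ and all $u \in [-1, 0]$. Second, using (2), choose $R_0 > 0$ large enough that $H(x, p, 0) \geq 3C_1 + 3$ whenever $|p|_x \geq R_0$, uniformly in $x$. Third, since $H$ is uniformly continuous on the compact set $\{(x, p, u) \in T^*M \times \R : |p|_x \leq R_0,\ -1 \leq u \leq 0\}$, select $\rro \in (0, 1)$ so small that $|H(x, p, 0) - H(x, p, -\rro)| \leq 1$ whenever $|p|_x \leq R_0$.

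The final step closes the argument by convexity. For any $p \in T_x^*M$ with $|p|_x > R_0$, set $\lambda := R_0/|p|_x \in (0, 1)$ and $\bar p := \lambda p$, so that $|\bar p|_x = R_0$ and $\bar p = \lambda p + (1-\lambda) \cdot 0$. Convexity of $H(x, \cdot, -\rro)$ yields
\begin{equation*}
H(x, \bar p, -\rro) \leq \lambda\, H(x, p, -\rro) + (1 - \lambda)\, H(x, 0, -\rro),
\end{equation*}
which rearranges to
\begin{equation*}
H(x, p, -\rro) \,\geq\, \frac{|p|_x}{R_0}\, H(x, \bar p, -\rro) - \left(\frac{|p|_x}{R_0} - 1\right) H(x, 0, -\rro).
\end{equation*}
Inserting the estimates $H(x, \bar p, -\rro) \geq H(x, \bar p, 0) - 1 \geq 3C_1 + 2$ and $-H(x, 0, -\rro) \geq -C_1$ produces the lower bound $\frac{|p|_x}{R_0}(2C_1 + 2) + C_1$, which diverges uniformly in $x$ as $|p|_x \to \infty$.

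The main obstacle is conceptual: since {\bf(H3)} makes $H$ \emph{decrease} in $u$, one cannot pass coercivity directly from $u = 0$ to $u = -\rro$ by a pointwise inequality. Convexity resolves this by forcing $H(x, \cdot, -\rro)$ to be bounded below along every ray from the origin by its values on the fixed sphere $|p|_x = R_0$, and on that sphere uniform continuity in $u$ (with $\rro$ chosen small) allows replacement by values at $u = 0$, where coercivity is already granted.
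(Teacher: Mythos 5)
Your proof is correct and follows essentially the same route as the paper's: the easy direction by monotonicity, and for $(2)\Rightarrow(1)$ both arguments fix a sphere $|p|_x=R_0$ on which $H(x,\cdot,0)$ is large, shrink $\rro$ by continuity and compactness so that $H(x,\cdot,-\rro)$ remains large there, and then use convexity of $H(x,\cdot,-\rro)$ along rays through the origin to obtain a lower bound growing linearly in $|p|_x$, uniformly in $x$. The only cosmetic difference is that you control the $u$-perturbation via uniform continuity on a compact cylinder, while the paper perturbs the strict inequality directly on the compact sphere; the quantitative bookkeeping in your final estimate checks out.
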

\begin{proof}
By the monotonicity {\bf(H3)}, the implication $(1)\Rightarrow (2)$ is  obvious.

Now, we show that $(2)$ implies $(1)$. To this end,  only assumption {\bf(H1)} will be used. Indeed, by the coercivity of $H(x, p, 0)$ and the compactness of $M$, there is a constant $r>0$  such that 
\begin{equation*}
	H(x, p,0) > \max_{x\in M}H(x,0,0)+2,\quad \text{for all~} (x,p)\in \partial B_r,
\end{equation*}
where $\partial B_r$ denotes the compact set $\{(x,p)\in T^*M : x\in M, |p|_x=r\}.$
By continuity and compactness, there exists a small constant $\rro>0$ such that 
\begin{equation}\label{gjh0}
	H(x, p, -\rro) > \max_{x\in M}H(x,0,-\rro) +1 ,\quad \text{for all~} (x,p)\in \partial B_r.
\end{equation}

Next, for each $(x, p)$ with $|p|_x>r$, we pick a point $z_{x,p}:=\frac{r}{|p|_x} p$   on $\partial B_r$. Then, the  convexity assumption {\bf(H1)} implies 
\begin{equation}\label{gjh1}
	H(x, z_{x,p},-\rro)\leq (1-\frac{r}{|p|_x})H(x,0,-\rro)+\frac{r}{|p|_x} H(x, p, -\rro),\quad \text{for all~} |p|_x>r.
\end{equation} 
 Since $z_{x,p}\in\partial B_r$,  we infer from \eqref{gjh0}-\eqref{gjh1} that
\begin{equation*}
	H(x, p, -\rro)\geq \frac{H(x, z_{x,p},-\rro)-H(x, 0,-\rro)}{r/|p|_x} +H(x, 0,-\rro)\geq \frac{1}{r}|p|_x+ H(x, 0,-\rro),
\end{equation*} 
for all $|p|_x>r$. 
This implies $\lim_{|p|_x\to \infty} H(x, p, -\rro)=+\infty$, uniformly in  $x\in M.$
\end{proof}

\noindent\textbf{Acknowledgments.} 
This work was initiated while I was visiting 
the Georgia Institute of Technology (December 2018--March 2019),
 whose warm hospitality is gratefully acknowledged. It was completed
while I was working at the University of Padova.
I am particularly grateful to Albert Fathi for many stimulating discussions during my stay at Georgia Tech, 
 and for his careful reading of the manuscript and valuable comments. I wish to thank Wei Cheng, Jun Yan and Jianlu Zhang for some helpful discussions.  I also would like to thank Gabriella Pinzari for her support. This research is funded by the ERC Project 677793 StableChaoticPlanetM.

\bibliographystyle{alpha}
\newcommand{\etalchar}[1]{$^{#1}$}

\end{document}